\newcolumntype{L}{>{\centering\arraybackslash}m{1.5cm}}
\numberwithin{equation}{section}
\newtheorem{theorem}{Theorem}
\newtheorem{lemma}{Lemma}
\newtheorem{proposition}{Proposition}
\newtheorem{corollary}{Corollary}
\theoremstyle{definition}
\newtheorem{definition}[theorem]{Definition}
\newtheorem{example}[theorem]{Example}
\theoremstyle{remark}
\newcommand{\RN}[1]{\uppercase\expandafter{\romannumeral#1}}
\begin{document}
	\title[Memory Dependent Growth in Sublinear Volterra Differential Equations]{Memory Dependent Growth in Sublinear Volterra Differential Equations}
	\author{John A. D. Appleby}
	\address{School of Mathematical
		Sciences, Dublin City University, Glasnevin, Dublin 9, Ireland}
	\email{john.appleby@dcu.ie} \urladdr{webpages.dcu.ie/\textasciitilde applebyj}
	
	\author{Denis D. Patterson}
	\address{School of Mathematical
		Sciences, Dublin City University, Glasnevin, Dublin 9, Ireland}
	\email{denis.patterson2@mail.dcu.ie} \urladdr{sites.google.com/a/mail.dcu.ie/denis-patterson}
	
	\thanks{Denis Patterson is supported by the Government of Ireland Postgraduate Scholarship Scheme operated by the Irish Research Council under the project GOIPG/2013/402.} 
	\keywords{Volterra equations, asymptotics, subexponential growth, unbounded delay, regular variation}
	\subjclass[2010]{Primary: 34K25; Secondary: 34K28.}
	\date{\today}
\begin{abstract}
We investigate memory dependent asymptotic growth in scalar Volterra  equations with sublinear nonlinearity. To obtain precise results we utilise the powerful theory of regular variation extensively. By computing the growth rate in terms of a related ordinary differential equation we show that when the memory effect is so strong that the kernel tends to infinity, the growth rate of solutions depends explicitly on the memory of the system. Finally, we employ a fixed point argument to determine analogous results for a perturbed Volterra equation and show that, for a sufficiently large perturbation, the solution tracks the perturbation asymptotically, even when the forcing term is potentially highly non-monotone.
\end{abstract}
\maketitle
\section{Introduction}
We investigate explicit memory dependence in the asymptotic growth rates of positive solutions of the following scalar Volterra integro-differential equation
\begin{align}\label{functional}
x'(t) &= \int_{[0,t]} \mu(ds)f(x(t-s)),\quad t > 0; \quad x(0) = \xi>0,
\end{align}
where $f$ is a positive sublinear function (i.e. $\lim_{x\to\infty}f(x)/x=0$) and $\mu$ is a non--negative Borel measure. The relevant existence and uniqueness theory regarding equations of the form \eqref{functional} is well known and guarantees a unique solution $x \in C(\mathbb{R}^+;(0,\infty))$ in the framework of this article \cite[Corollary 12.3.2]{GLS}, with the convention that $\mathbb{R}^+:= [0,\infty)$. By defining the function 
\begin{align}\label{M_defn}
M(t):= \int_{[0,t]} \mu(ds), \quad t \geq 0,
\end{align}
it follows that \eqref{functional} is equivalent to 
\begin{align}\label{volterra}
x(t) &= x(0) + \int_0^t M(t-s)f(x(s))\,ds,\quad t \geq 0, \quad x(0)=\xi>0.
\end{align}
We also study the asymptotic behaviour of the perturbed Volterra equation  
\begin{align}\label{functional_pert}
x'(t) &= \int_{[0,t]} \mu(ds)f(x(t-s)) + h(t),\quad t > 0; \quad x(0) = \xi>0.
\end{align}
As with the unperturbed equation, it is useful to consider an integral form of 
\eqref{functional_pert}, and by defining 
\begin{equation} \label{def.H}
H(t) := \int_0^t h(s)\,ds, \quad t\geq 0,
\end{equation} 
it follows that \eqref{functional_pert} can be written in integral form as
\begin{align}\label{volterra_pert}
x(t) &= x(0) + \int_0^t M(t-s)f(x(s))\,ds + H(t),\quad t \geq 0; \quad x(0)=\xi>0.
\end{align}

In \cite{sublinear2015}, with $\mu$ a finite measure, we demonstrate that when $f$ is sublinear and asymptotically increasing, the solution of \eqref{functional} obeys 
$
\lim_{t\to\infty}F(x(t))/t = \int_{[0,\infty)}\mu(ds) < \infty,
$
where 
\begin{align}\label{cap_F}
F(x) := \int_1^x \frac{1}{f(u)}\,du, \quad x >0.
\end{align}
In other words, the structure of the memory does not affect the asymptotic growth rate of the solution of \eqref{functional} when the total measure is finite: indeed, the entire mass of $\mu$ could be concentrated at $0$, because the ordinary differential equation $y'(t)=\int_{[0,\infty)} \mu(ds) \cdot f(y(t))$ for $t\geq 0$ also obeys $F(y(t))/t\to \int_{[0,\infty)} \mu(ds)$ as $t\to\infty$. This is in contrast to the linear case where the growth rate depends crucially on the structure of the memory (cf. \cite[Theorem 7.2.3]{GLS}). In \cite{sublinear2015} we also show that if $\lim_{t\to\infty}M(t) = \infty$, then $\lim_{t\to\infty}F(x(t))/t = \infty$. This result suggests that allowing the total measure to be infinite makes the long run dynamics more sensitive to the memory but that comparison with a non-autonomous ordinary differential equation may be necessary in this case.

To achieve precise asymptotic results for the solutions of \eqref{functional} and \eqref{functional_pert} we employ the theory of regular variation extensively. We record now for the reader's convenience the definition of a regularly varying function (in the sense of Karamata) and allied notation. 
\begin{definition}
	Suppose a measurable function $h:\mathbb{R}\to (0,\infty)$ obeys
	\[
	\lim_{x\to\infty}\frac{h(\lambda\,x)}{h(x)} = \lambda^\rho, \mbox{ for all } \lambda>0, \mbox{ some }\rho\in\mathbb{R},
	\]
	then $h$ is \emph{regularly varying at infinity} with index $\rho$, or $h\in \text{RV}_\infty(\rho)$.
\end{definition}
Regular variation provides a natural generalisation of the class of power functions and the application of the theory of regular variation to the study of qualitative properties of differential equations is an active area of investigation. Recent research themes in this direction are recorded in reviews such as \cite{maric2000regular} and \cite{rehakrv} and all properties of regularly varying functions employed can be found in the classic text \cite{BGT}. The authors of the present paper give a highly abridged list of the properties we have found useful in  the introduction to our work~\cite{appleby2013classification}, which concerns ordinary differential equations.  

Many of the applications of regular variation in the asymptotic theory of \emph{linear} Volterra equations deal with the situation in which it is desired to capture slow decay in the memory, as captured by a measure or kernel, or a singularity. Of course, slowly fading memory can be described in other ways, using for instance the theory of $L^1$ weighted spaces (see e.g.,\cite{shea1975variants} and for stochastic equations, \cite{applebyweighted2010}). When the kernel is integrable, it is often possible to obtain precise rates of decay in $L^\infty$ by means of a larger class of kernels (such as the subexponential class studied in \cite{applebyexactrates06}, of which regularly varying kernels are a subclass). However, for singular equations, or equations with non--integrable kernels, the full power of the theory of regular variation is 
often needed: in particular, for linear equations, transform methods and the Abelian and Tauberian theorems for regular variation are exploited (see e.g.,\cite{appleby2011long,wong1974asymptotic}). It should be stressed, though, that such methods are of greatest utility for linear equations: indeed, there does not seem to be especial benefit gained in this work in applying such a transform approach. Moreover, in this paper, the equation is intrinsically non--linear: $f(x)$ is not of linear order as $x\to\infty$, and regular variation arises \emph{both} in the slow decay of $\mu$ and in the sublinear growth of $f$. Also, it is a general theme of the works cited above that the slow decay in the memory, combined with an appropriate type of stability, give rise to convergence at a certain rate to equilibrium. By contrast in this paper, solutions grow, rather than decay. 

With a view to applications, we believe the most interesting subclass of equations will retain the property that the asymptotic contribution to the growth rate from a moving interval of any fixed duration ($\tau>0$, say) is negligible, in the sense that 
\begin{equation} \label{eq.fademu}
\lim_{t\to\infty} \int_{[t,t+\tau)}\mu(ds)=0, \quad \text{ for each $\tau>0$}.
\end{equation}
It should be noted that our proofs \emph{do not} require this stipulation, but we mention it in order to motivate shortly a stronger hypothesis on $M$.  

With \eqref{eq.fademu} still in force, if $\mu$ is absolutely continuous and admits a non--negative and continuous density $k$, such that $\mu(ds) = k(s)ds$, we see that $k\not\in L^1(0,\infty)$ because $M(t)\to\infty$ as $t\to\infty$. In particular the property \eqref{eq.fademu} is implied by $k(t)\to 0$ as $t\to\infty$. Therefore, it is perfectly possible for $k$ to lie in another $L^p$ space, for some $p>1$. As an example, suppose that $k(t)\sim t^{-\theta}$ as $t\to\infty$ for $\theta\in (0,1)$: then for $p>1/\theta>1$, $k\in L^p(0,\infty)$, while $k\not\in L^1(0,\infty)$. In this sense, our work shares concerns with existing results in the literature in which the Volterra equation does not possess an integrable kernel (see e.g., \cite{shea1975variants,hannsgen}).

The type of fading memory property \eqref{eq.fademu} we suggested was of interest motivates a stronger assumption on $M$. First, we see that 
\eqref{eq.fademu} implies  
\[
\frac{1}{n\tau} M(n\tau) = \frac{1}{\tau} \frac{1}{n}\sum_{j=0}^{n-1} \int_{[j\tau,j\tau+\tau)} \mu(ds)
\to 0 \text{ as $n\to\infty$}
\] 
and so the non--negativity of $\mu$ implies that $M(t)/t \to 0$ as $t\to\infty$. Since $M(t)\to\infty$ as $t\to\infty$, $M$ is non--decreasing, and $M(t)/t\to 0$ as $t\to\infty$, it is reasonable to suppose that $M \in \text{RV}_\infty(\theta)$ for $\theta\in[0,1]$. We note that the inclusion of $\theta=1$ in the parameter range does not lead to any problems in the analysis, and indeed it transpires that our arguments are valid for all $\theta\geq 0$. 

Analogously, the nonlinearity, $f$, is a positive and asymptotically increasing function such that $f(x)\to\infty$ and $f(x)/x\to 0$ as $x\to\infty$; hence it is natural to assume that $f\in \text{RV}_\infty(\beta)$ for $\beta\in [0,1)$. We can rule out some choices of the parameter $\beta$ rapidly: if $\beta>1$, $f(x)/x\to\infty$ as $x\to\infty$, and if $\beta<0$, $f$ is asymptotic to a decreasing function. When $\beta=0$ we append the hypotheses of asymptotic monotonicity and increase to infinity on $f$, as these are not necessarily satisfied by functions in $\text{RV}_\infty(0)$, but otherwise the analysis is essentially the same as when $\beta\in (0,1)$. The exclusion of the case $\beta=1$ is largely on technical grounds: informally, when $\beta=1$, the inverse of the increasing function $F$ defined by \eqref{cap_F} is \emph{no longer regularly varying}; $F^{-1}$ now belongs to the class of rapidly varying functions (which we define below). It also can be seen from the nature of our results that the asymptotic behaviour of solutions must be of a different form from those that hold when $\beta<1$. For $\beta<1$, no such technical problem arises, and indeed $F^{-1}$ is regularly varying with index $1/(1-\beta)$.      
 
In some situations, we will consider very rapidly growing forcing terms $H$ in the perturbed equation \eqref{volterra_pert} which are not regularly varying. We sometimes consider forcing terms from the class of \emph{rapidly varying functions}, and a definition of this class follows.
\begin{definition}
Suppose a measurable function $h:\mathbb{R}\to (0,\infty)$ obeys for $\lambda>0$:
\[
\lim_{x\to\infty}\frac{h(\lambda\,x)}{h(x)} = 
\begin{cases}
0, & \lambda<1, \\
1, & \lambda=1, \\
+\infty, & \lambda>1.
\end{cases}
\]
Then $h$ is \emph{rapidly varying at infinity}, or $h\in \text{RV}_\infty(\infty)$. If on the other hand, 
$h:\mathbb{R}\to (0,\infty)$ obeys for $\lambda>0$:
\[
\lim_{x\to\infty}\frac{h(\lambda\,x)}{h(x)} = 
\begin{cases}
+\infty, & \lambda<1, \\
1, & \lambda=1, \\
0, & \lambda>1.
\end{cases}
\]
Then we write $h\in \text{RV}_\infty(-\infty)$.
\end{definition}

The proof of our main result for \eqref{functional}, Theorem \ref{main_thm}, relies principally upon comparison methods, properties of regularly varying functions and a time change argument for delay differential equations. We first use constructive comparison methods, similar in spirit to those employed by Appleby and Buckwar \cite{applebycomparison2010} for linear equations, to establish ``crude'' upper and lower bounds on the solution of \eqref{functional}. The more challenging construction is that of the lower bound and is completed by comparing solutions of \eqref{functional} with those of a related nonlinear pantograph equation using time change arguments inspired by Brunner and Maset \cite{brunnermaset2009}. Finally, we prove a convolution lemma for regularly varying functions (cf. \cite[Theorem 3.4]{appleby2010regularly}) which is then used, in conjunction with straightforward comparison methods, to sharpen the aforementioned ``crude'' upper and lower bounds, and show that they coincide. Another paper which 
uses similar iterative methods to sharpen estimates in the growth of solutions of nonlinear convolution Volterra equations is Schneider~\cite{schneider1982}.

 With $\bar{M}(t):= \int_0^t M(s)ds$, we obtain $\lim_{t\to\infty}F(x(t))/\bar{M}(t) = \Lambda(\beta,\theta)$, or that the growth rate of solutions of \eqref{functional} depend explicitly on both indices of regular variation, and therefore the memory of the system (Theorem \ref{main_thm}). The value of the parameter--dependent limit $\Lambda$ can be determined explicitly in terms of the Gamma function. This result is only valid for $\beta \in [0,1)$ and hence may not hold if $f$ is only assumed to be sublinear (i.e. $\lim_{x\to\infty}f(x)/x=0$). In this sense, it appears that the imposition of the hypothesis of regular variation on $f$ and $M$ is intrinsic to the form of the asymptotic behaviour deduced, rather than a being a purely technical contrivance, and the restriction to $\beta\not=1$ also seems justified by grounds other than the complexity of the analysis needed to prove a sharp result. 

The results and methods outlined above for \eqref{functional} can also be used to yield sharp asymptotics for the perturbed equation \eqref{functional_pert}. 
If $H$ is positive, solutions to \eqref{functional_pert} will be positive and exhibit unbounded growth; therefore there is no need to assume pointwise positivity of $h$. Hence solutions of \eqref{functional_pert} are no longer necessarily non--decreasing and  more delicate comparison techniques are required to treat this additional difficulty.

When $H$ is of the same order of magnitude as the solution of \eqref{ODE}, we establish non-trivial upper and lower bounds on the solution and then employ a simple fixed point iteration argument to calculate the exact asymptotic growth rate of the solution in terms of a characteristic equation (Theorem \ref{thm.pert}). Moreover, the converse also holds: growth in the solution of \eqref{functional_pert} at a rate proportional to that of the solution of \eqref{ODE} is possible only when $H$ is of the same order as that solution. 
In these results, the parameter $\theta$ characterises the dependence of the growth rate on the degree of memory in the system. When the perturbation term grows sufficiently quickly, the solution tracks $H$ asymptotically, in the sense that $\lim_{t\to\infty}x(t)/H(t)=1$, even when $H$ is allowed to be highly non-monotone. Indeed, under certain restrictions we can show that our characterisation of rapid growth in the perturbation is necessary in order for $\lim_{t\to\infty}x(t)/H(t)=1$ to prevail.
\section{Main Results and Discussion}\label{results}
The following equivalence relation on the space of positive continuous functions and shorthand are used throughout.
\begin{definition}
Suppose $a,\,b \in C(\mathbb{R}^+;\mathbb{R}^+)$. $a$ and $b$ are asymptotically equivalent if 
$\lim_{t\to\infty}a(t)/b(t)=1$; we often write $a(t) \sim b(t)$ as $t\to\infty$ for short.
\end{definition}
$\mu$ is a non-negative Borel measure on $\mathbb{R}^+$ with infinite total variation; more precisely
\begin{align}\label{infinite_measure}
\mu(E) \geq 0 \text{ for all } E \in \mathcal{B}(\mathbb{R^+}), \quad \int_{[0,\infty)} \mu(ds) = \lim_{t\to\infty}M(t) = \infty,
\end{align}
where $M$ is defined as in \eqref{M_defn}. Our first result gives precise information on the asymptotic growth rate of the solution to \eqref{functional}. We state our result before carefully analysing the conclusion. We defer the proof to Section \ref{sec.main_proofs}.
\begin{theorem}\label{main_thm}
Suppose the measure $\mu$ obeys \eqref{infinite_measure} with $M \in \text{RV}_\infty(\theta),\, \theta \geq 0$ and that $f \in \text{RV}_\infty(\beta),\,\beta\in [0,1)$. When $\beta=0$ let $f$ be asymptotically increasing and obey $\lim_{x\to\infty}f(x)=\infty$. Then the solution, $x$, of \eqref{functional} satisfies $x \in \text{RV}_\infty\left((1+\theta)/(1-\beta)\right)$ and
\begin{align}\label{Lambda}
\lim_{t \to \infty}\frac{F(x(t))}{\bar{M}(t)} = \frac{\Gamma(\theta+1)\Gamma\left(\tfrac{1+\beta\theta}{1-\beta}\right)}{\Gamma\left(\tfrac{1+\theta}{1-\beta}\right)}=:\Lambda(\beta,\theta),
\end{align}
where $\Gamma(x) := \int_0^\infty t^{x-1}e^{-t}dt$ and $\bar{M}(t):= \int_0^t M(s)ds$.
\end{theorem}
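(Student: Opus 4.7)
Write $\alpha := (1+\theta)/(1-\beta)$. Karamata's theorem immediately identifies the candidate envelope $F^{-1}(\Lambda(\beta,\theta)\bar{M}(t))$ as lying in $\text{RV}_\infty(\alpha)$, since $\bar{M}\in\text{RV}_\infty(\theta+1)$, $F\in\text{RV}_\infty(1-\beta)$, and hence $F^{-1}\in\text{RV}_\infty(1/(1-\beta))$. The strategy is two-stage: first establish crude polynomial envelopes $x(t)\asymp t^{\alpha}\times(\text{slowly varying})$, and then bootstrap them via a Karamata-type convolution lemma to lock in the exact multiplicative constant $\Lambda(\beta,\theta)$.

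For the \emph{upper envelope}, observe that $x$ is eventually increasing (as $f(x)>0$ and $M\geq 0$) and replace $f$ by a non-decreasing asymptote to write
\[
x'(t)=\int_{[0,t]}\mu(ds)\,f(x(t-s))\leq(1+o(1))M(t)f(x(t)).
\]
Separation of variables then yields $F(x(t))\leq(1+o(1))\bar{M}(t)$, so $x(t)\leq F^{-1}((1+\varepsilon)\bar{M}(t))$ eventually, a function in $\text{RV}_\infty(\alpha)$.

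The \emph{lower envelope} is the principal obstacle: a naive pointwise lower bound is too weak because the memory ``pulls down'' the integrand. Fix $q\in(0,1)$; the monotonicities of $M$ and $f\circ x$ give
\[
x(t)\geq\int_{qt}^{t}M(t-s)f(x(s))\,ds\geq f(x(qt))\int_{0}^{(1-q)t}M(u)\,du\sim(1-q)^{\theta+1}f(x(qt))\bar{M}(t).
\]
Applying $F$ and exploiting the regular variation indices of $F$ and $f$ converts this into a pantograph-type functional inequality of the form $F(x(t))\geq c(q)\bar{M}(t)+\psi_q(F(x(qt)),t)$. Following the Brunner--Maset time-change philosophy, the substitution $u=\log t$ recasts this as a discrete iterative inequality in $u$; iterating and passing to the limit in $q\uparrow 1$ after an appropriate optimisation yields $F(x(t))\geq\lambda\bar{M}(t)$ for some explicit $\lambda>0$, which together with the upper envelope pins $x$ to exact polynomial order $\alpha$ modulo slowly varying factors.

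The final sharpening invokes a regularly varying convolution lemma (in the spirit of \cite[Theorem~3.4]{appleby2010regularly}): for $K\in\text{RV}_\infty(\theta)$ and $g\in\text{RV}_\infty(\gamma)$ with $\theta+\gamma+1>0$,
\[
\int_0^t K(t-s)g(s)\,ds\sim B(\theta+1,\gamma+1)\,tK(t)g(t),\qquad t\to\infty.
\]
With $x$ of exact order $\alpha$, one has $f\circ x\in\text{RV}_\infty(\alpha\beta)$; feeding the envelopes into \eqref{volterra} and iterating the convolution lemma tightens the multiplicative constants in the upper and lower bounds until they coincide, yielding $x(t)\sim B(\theta+1,\alpha\beta+1)\,tM(t)f(x(t))$. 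Combining this with the Karamata identities $F(x)\sim x/((1-\beta)f(x))$ (since $1/f\in\text{RV}_\infty(-\beta)$) and $\bar{M}(t)\sim tM(t)/(\theta+1)$ produces
\[
\frac{F(x(t))}{\bar{M}(t)}\longrightarrow\frac{(\theta+1)B(\theta+1,\alpha\beta+1)}{1-\beta},
\]
and the identities $\alpha\beta+1=(1+\beta\theta)/(1-\beta)$, $\alpha(1-\beta)=\theta+1$, together with $\Gamma(\alpha+1)=\alpha\Gamma(\alpha)$, collapse the right-hand side to $\Lambda(\beta,\theta)$. The membership $x\in\text{RV}_\infty(\alpha)$ is then immediate from the regular variation of $F^{-1}$ and $\bar{M}$.
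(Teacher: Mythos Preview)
Your three-stage strategy (crude upper via ODE comparison, crude lower via a pantograph argument, sharpening via a convolution lemma for regularly varying functions) is exactly the paper's, and your upper bound, convolution lemma, and final Gamma-function bookkeeping all match.

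The lower bound is executed differently. The paper works with the \emph{differential} form: from $x'(t)\geq C_0 M_1(t-T)\phi(x((t+T)/2))$ it applies the time change $\alpha(t)=\bar M_1^{-1}(t)+T$ to obtain a true pantograph differential inequality $\tilde x'(t)\geq C_0\phi(\tilde x(qt))$ with a fixed $q=2^{-(\theta+2)}$, and then builds an explicit subsolution $x_-(t)=\Phi^{-1}(ct-d)$. Your route via the \emph{integral} equation, giving $x(t)\gtrsim (1-q)^{\theta+1}\bar M(t)f(x(qt))$, is a legitimate alternative and arguably more direct, but the intermediate form you write down is not correct: the inequality is multiplicative, and composing with $F$ (using $F\in\text{RV}_\infty(1-\beta)$ and $f\circ F^{-1}\in\text{RV}_\infty(\beta/(1-\beta))$) yields something of the shape $F(x(t))\gtrsim C\,\bar M(t)^{1-\beta}\,F(x(qt))^{\beta}$, not the additive ``$F(x(t))\geq c(q)\bar M(t)+\psi_q(F(x(qt)),t)$'' you state. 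The additive structure appears only after a further logarithm, so the $u=\log t$ iteration must be run on $\log F(x(e^u))$, not on $F(x(t))$; once this is corrected your argument goes through and delivers $\liminf F(x(t))/\bar M(t)>0$.

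One smaller remark: the paper's sharpening is not an iteration but a single self-referential step---set $\eta=\limsup x(t)/\Phi^{-1}(tM(t))$, feed the bound $x(t)<(\eta+\epsilon)\Phi^{-1}(tM(t))$ through the convolution lemma, and solve $\eta^{1-\beta}=B(\theta+1,(1+\beta\theta)/(1-\beta))/(1-\beta)$ directly. Your iterative phrasing is equivalent (and is in fact the device the paper uses for the perturbed equation in Theorem~\ref{thm.pert}).
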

By Karamata's Theorem (cf. \cite[Theorem 1.5.11]{BGT}), $\lim_{t\to\infty}\bar{M}(t)/tM(t) = 1/(1+\theta)$. Hence the conclusion of Theorem \ref{main_thm} is equivalent to 
\[
\lim_{t \to \infty}\frac{F(x(t))}{t\,M(t)} = (1+\theta)\frac{\Gamma(\theta+1)\Gamma\left(\tfrac{1+\beta\theta}{1-\beta}\right)}{\Gamma\left(\tfrac{1+\theta}{1-\beta}\right)} = \frac{1}{1-\beta}B\left(\theta +1,\frac{1+\theta \beta}{1-\beta}\right),
\]
where $B$ denotes the Beta function, which is defined by $B(x,y):=\int_0^1 \lambda^{x-1}(1-\lambda)^{y-1}d\lambda$ (cf. \cite[p.142]{nist}). Furthermore, since $F^{-1} \in \text{RV}_\infty\left(1/(1-\beta)\right)$, \eqref{Lambda} is also equivalent to
\begin{align}\label{x_asym_F_inv}
\lim_{t \to \infty}\frac{x(t)}{F^{-1}(t\,M(t))} = {\left\{\frac{1}{1-\beta}B\left(\theta +1,\frac{1+\theta\beta}{1-\beta}\right)\right\}}^{\tfrac{1}{1-\beta}}.
\end{align}
Theorem \ref{main_thm} expresses the leading order asymptotics of the solution in terms of the functions $F$ and $\bar{M}$. The dependence of $\Lambda$ on $\beta$ and $\theta$ is known explicitly and this can be used to gain some insight into second order effects of the nonlinearity and the memory on the growth rate. The following proposition records some properties of the function $\Lambda(\beta,\theta)$ that are useful when interpreting the conclusion of Theorem \ref{main_thm}. The proofs of the forthcoming claims are deferred to Section \ref{sec.examples}.
\begin{proposition}\label{interpret}
Suppose $\Lambda(\beta,\theta)$ is defined by \eqref{Lambda} with $\beta\in[0,1)$ and $\theta\in[0,\infty)$. Then
\begin{enumerate}[(i.)]
\item $\Lambda(0,\theta)=1$ for fixed $\theta\in(0,\infty)$ and $\Lambda(\beta,0)=1$ for fixed $\beta\in(0,1)$,
\item $\lim_{\beta\uparrow 1}\Lambda(\beta,\theta)=0$ for fixed $\theta\in(0,\infty)$ and $\lim_{\theta\to\infty}\Lambda(\beta,\theta)=0$ for fixed $\beta\in(0,1)$,
\item $\beta \mapsto \Lambda(\beta,\theta)$ is decreasing, $\beta\in(0,1)$, $\theta \mbox{ (fixed) } \in (0,\infty)$,
\item $\theta \mapsto \Lambda(\beta,\theta)$ is decreasing, $\theta\in(0,\infty)$, $\beta \mbox{ (fixed) } \in (0,1)$,
\item $\Lambda(\beta,\theta) \in (0,1)$ for $\beta\in(0,1)$ and $\theta\in(0,\infty)$.
\end{enumerate}
\end{proposition}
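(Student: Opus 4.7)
The plan is to record the identities $\theta + 1 = (1-\beta)c$, $b = \beta c + 1$, and $c = b + \theta$, with $b := (1+\beta\theta)/(1-\beta)$ and $c := (1+\theta)/(1-\beta)$, and then pass freely between the three equivalent forms
\begin{align*}
\Lambda(\beta,\theta) = \frac{\Gamma(\theta+1)\Gamma(b)}{\Gamma(c)} = c\, B(\theta+1, \beta c + 1) = \beta c\, B(\beta c,(1-\beta)c),
\end{align*}
where the second follows from $(\theta+1)+b = c+1$ together with $\Gamma(c+1) = c\Gamma(c)$. Claim (i) is then immediate from $\Gamma(1) = 1$. For (iii), I would differentiate $\log\Lambda$ in $\beta$; since $\partial b/\partial\beta = \partial c/\partial\beta = (1+\theta)/(1-\beta)^2$, the derivative collapses to $\tfrac{1+\theta}{(1-\beta)^2}[\psi(b)-\psi(c)]$, which is strictly negative because the digamma $\psi$ is strictly increasing on $(0,\infty)$ and $b = c - \theta < c$.

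For (ii), as $\beta \to 1^-$ with $\theta$ fixed I would use $\Lambda = c\,B(\theta+1,\beta c + 1)$ together with the standard asymptotic $B(p,q)\sim\Gamma(p)q^{-p}$ as $q\to\infty$ (a direct consequence of Stirling), giving $\Lambda\sim\Gamma(\theta+1)\beta^{-(\theta+1)}c^{-\theta}\to 0$ since $c\to\infty$ and $\theta>0$. As $\theta\to\infty$ with $\beta$ fixed, the substitution $t = e^{-v/(1+\theta)}$ in $\Lambda = c\int_0^1 t^\theta(1-t)^{\beta c}\,dt$ yields
\[
\Lambda(\beta,\theta) = \frac{1}{1-\beta}\int_0^\infty e^{-v}\bigl(1-e^{-v/(1+\theta)}\bigr)^{\beta c}\,dv.
\]
For each fixed $v>0$, $\log\bigl[(1-e^{-v/(1+\theta)})^{\beta c}\bigr] \sim \tfrac{\beta(1+\theta)}{1-\beta}(\log v - \log(1+\theta)) \to -\infty$, so the integrand vanishes pointwise, and dominated convergence (with $e^{-v}$ as envelope) delivers $\Lambda \to 0$.

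Part (iv) is the chief obstacle. For fixed $\beta$, I would reparameterize by $s = c$, so $\Lambda$ becomes $\phi(s) := \Gamma(\beta s + 1)\Gamma((1-\beta)s)/\Gamma(s)$, with $s$ ranging over $(1/(1-\beta),\infty)$ as $\theta$ ranges over $(0,\infty)$. Using $\psi(x+1) = \psi(x) + 1/x$ followed by Gauss's series $\psi(x) = -\gamma + \sum_{n=0}^\infty[\tfrac{1}{n+1} - \tfrac{1}{n+x}]$, the constants and $1/(n+1)$ pieces telescope away and one obtains
\[
(\log\phi)'(s) = \frac{1}{s} + \sum_{n=0}^\infty\left[\frac{1}{n+s} - \frac{\beta}{n+\beta s} - \frac{1-\beta}{n+(1-\beta)s}\right].
\]
The decisive algebraic identity
\[
\frac{\beta}{n+\beta s} + \frac{1-\beta}{n+(1-\beta)s} - \frac{1}{n+s} = \frac{\beta(1-\beta)s(2n+s)}{(n+s)(n+\beta s)(n+(1-\beta)s)} \geq 0,
\]
verified by clearing denominators, shows each bracket is non-positive, and remarkably the $n=0$ summand evaluates to exactly $-1/s$, cancelling the isolated $1/s$. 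The remaining $n\geq 1$ terms are strictly negative, so $(\log\phi)'(s) < 0$ and $\theta\mapsto\Lambda(\beta,\theta)$ is strictly decreasing. Claim (v) then follows at once: (iii) together with (i) yields $\Lambda(\beta,\theta) < \Lambda(0,\theta) = 1$ for $\beta\in(0,1)$, $\theta>0$, while positivity of $\Gamma$ on $(0,\infty)$ forces $\Lambda > 0$. The hard part will be spotting the $n=0$ cancellation; once that and the algebraic identity above are in hand, the remaining parts are routine bookkeeping.
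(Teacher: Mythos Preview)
Your proof is correct. Parts (i), (iii), and (v) are essentially the paper's argument; the paper differentiates $\Lambda$ itself rather than $\log\Lambda$ in (iii), but the reduction to $\psi(b)-\psi(c)<0$ via monotonicity of the digamma is identical.

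The genuine departures are in (ii) and especially (iv). For (ii) the paper applies Stirling's approximation directly to the three Gamma factors for both limits, while you use the Beta asymptotic $B(p,q)\sim\Gamma(p)q^{-p}$ for $\beta\uparrow 1$ and an integral representation with dominated convergence for $\theta\to\infty$; both routes are equally valid and of comparable length.

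For (iv) the paper's argument is markedly shorter than yours. In your parametrization the sign condition $(\log\phi)'(s)<0$ reads
\[
\psi(s)-\beta\psi(\beta s+1)-(1-\beta)\psi((1-\beta)s)>0.
\]
Since $s>1/(1-\beta)$ forces $\beta s+1<s$, monotonicity of $\psi$ gives $\psi(\beta s+1)<\psi(s)$, and the left side is bounded below by $(1-\beta)\bigl[\psi(s)-\psi((1-\beta)s)\bigr]>0$. This two--line bound replaces your Gauss series, the algebraic identity for the summand, and the $n=0$ cancellation. Your route buys an explicit closed--form series for the derivative and is robust in that it does not require spotting the right intermediate comparison; the paper's trick buys brevity and needs nothing beyond the monotonicity of $\psi$ already used in (iii).
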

\begin{figure}[H]
\includegraphics[scale=0.43]{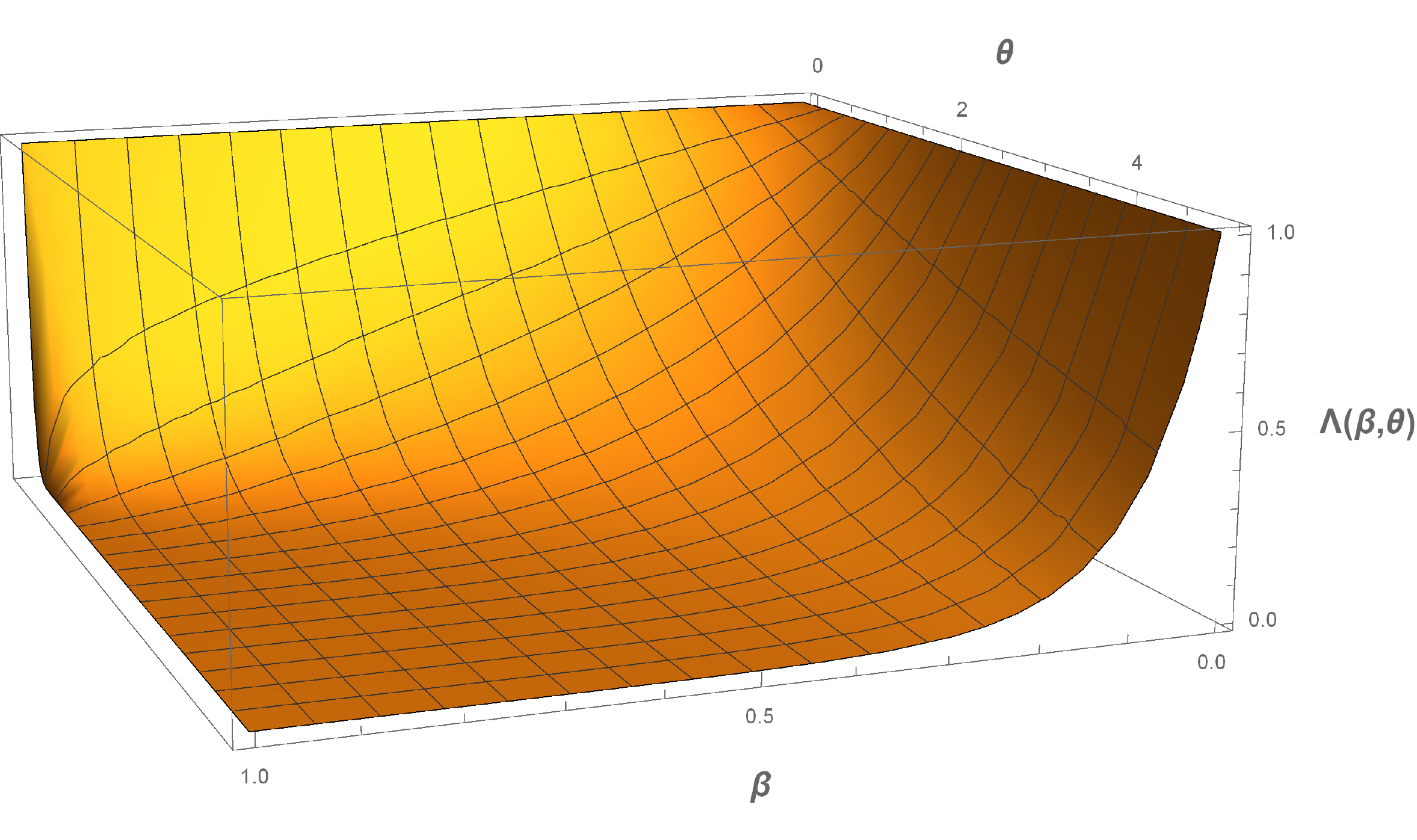}
\caption{Plot of the surface $\Lambda(\beta,\theta)$ with $\beta\in[0,1)$ and $\theta\in[0,5]$.}
\end{figure}
For each fixed $\beta \in (0,1)$, letting $\theta = 0$ in Theorem \ref{main_thm} yields $\lim_{t\to\infty}F(x(t))/\bar{M}(t)=1$. The authors have previously obtained this conclusion for sublinear equations of the form \eqref{volterra} without regular variation but with $\lim_{t\to\infty}M(t) = M\in(0,\infty)$. Therefore Theorem \ref{main_thm} can be thought of as a continuous extension of our previous results for \eqref{functional} with sublinear nonlinearities and finite measures (see \cite{sublinear2015} for further details).\\\\*
For a fixed $\beta \in (0,1)$, a decrease in the value of $\theta$ represents an increase in the rate of decay of the measure $\mu$. This can be made precise by supposing that the measure $\mu$ is absolutely continuous, and specifically that $\mu(ds) = m(s)ds$ for continuous $m\in \text{RV}_\infty(\theta-1),\,\theta\in(0,1)$.  Therefore, increasing the value of $\theta$ gives more weight to values of the solution in the past (more memory) and we expect the growth rate of solutions of \eqref{functional} to be slower than that of the related ordinary differential equation
\begin{align}\label{ODE}
y'(t) = M(t)f(y(t)),\,\, t >0; \,\, y(0) = \xi >0.
\end{align}
The equation \eqref{ODE}, in contrast, places the entire weight $M(t)$ at the present time, when the solution is largest. Hence, increasing the value of $\theta$ (putting more weight further into the past) slows the growth rate and it is intuitive that $\Lambda(\beta,\theta)$ is decreasing in $\theta$. Using this comparison with \eqref{ODE} once more, it is clear that Proposition \ref{interpret} $(v.)$ must hold since solutions of \eqref{functional} can never grow faster than those of \eqref{ODE} (if $f$ is strictly increasing this can be seen by inspection).

For a fixed $\theta \in (0,\infty)$, one might expect an increase in $\beta$ to lead to a faster rate of growth of the solution of \eqref{functional}. Therefore, it may initially be surprising  that $\Lambda(\beta,\theta)$ is decreasing in $\beta$. This counter-intuitive result is best understood by explaining the error introduced in the approximation of the right-hand side of \eqref{functional}. From \eqref{functional}
\[
x'(t) = \int_{[0,t]} \mu(ds)f(x(t-s)) = \int_{[0,t]} \mu(ds)\frac{f(x(t-s))}{f(x(t))} f(x(t)), \quad t >0.
\]  
The error of our upper bound on the solution is proportional to the ratio $f(x(t-s))/f(x(t))$ for $s\in(0,t)$, or $f(x(\lambda t))/f(x(t))$ for $\lambda \in (0,1)$. Since $f \circ x \in \text{RV}_\infty(\beta(1+\theta)/(1-\beta))$ 
\[
\lim_{t\to\infty}\frac{f(x(\lambda t))}{f(x(t))} = \lambda^{\frac{\beta(1+\theta)}{1-\beta}} =: \gamma(\beta). 
\]
When $\gamma(\beta)$ is close to one, the solution of \eqref{functional} is close to that of \eqref{ODE} and hence our estimate is sharp. However, $\gamma(\beta)$ is decreasing and $\lim_{\beta\uparrow 1}\gamma(\beta)=0$. Thus the zero limit as $\beta\uparrow 1$ in Proposition \ref{interpret} $(ii.)$ represents the fact that the solution of \eqref{ODE} increases much faster in $\beta$ than the solution to \eqref{functional}, for a fixed value of $\theta$.
\section{Results for Perturbed Volterra Equations}\label{perturbed}
We now present a result which illustrates how our precise understanding of the asymptotics of solutions of \eqref{functional} can be applied to perturbed versions of the equation, such as \eqref{functional_pert}. This result applies to perturbations of \eqref{functional} that are of the same, or smaller, order of magnitude as solutions of the ordinary differential equation \eqref{ODE}. Our assumptions on $H$ guarantee that $\lim_{t\to\infty}x(t) = \infty$ but this limit is no longer necessarily achieved monotonically and this is reflected in the added complexity of certain technical aspects of the proofs. The proofs of the results in this section are largely deferred to Section \ref{sec.main_proofs}.
\begin{theorem}\label{thm.pert}
Suppose the measure $\mu$ obeys \eqref{infinite_measure} with $M \in \text{RV}_\infty(\theta),\, \theta \geq 0$ and that $f \in \text{RV}_\infty(\beta),\,\beta\in [0,1)$. When $\beta=0$ let $f$ be asymptotically increasing and obey $\lim_{x\to\infty}f(x)=\infty$. Let $x$ denote the solution of \eqref{functional_pert} and suppose $H \in C((0,\infty);(0,\infty))$. Then the following are equivalent:
\begin{align}
(i.)\quad \lim_{t\to\infty}\frac{x(t)}{F^{-1}(t\,M(t))} = \zeta \in [L,\infty),\quad (ii.)\quad\lim_{t\to\infty}\frac{H(t)}{F^{-1}(t\,M(t))} = \lambda \in [0,\infty),
\end{align}
where $L = \left\{B\left(1+\theta,\tfrac{1+\theta\beta}{1-\beta}\right)/(1-\beta)\right\}^{1/(1-\beta)}$, and moreover
\begin{align}\label{characteristic}
\zeta = \frac{\zeta^\beta}{1-\beta}\,B\left(1+\theta,\tfrac{1+\theta\beta}{1-\beta}\right) + \lambda.
\end{align}
\end{theorem}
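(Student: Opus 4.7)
The plan is to work with the integral form \eqref{volterra_pert}, normalised by the natural scale $\phi(t):=F^{-1}(tM(t))$. Since $F^{-1}\in\text{RV}_\infty(1/(1-\beta))$ and $tM(t)\in\text{RV}_\infty(1+\theta)$, one has $\phi\in\text{RV}_\infty((1+\theta)/(1-\beta))$, in line with \eqref{x_asym_F_inv}. The workhorse estimate, which is essentially the computation already underlying Theorem \ref{main_thm}, is that whenever $x(t)\sim\zeta\phi(t)$ as $t\to\infty$ for some $\zeta>0$, the convolution term in \eqref{volterra_pert} satisfies
\[
\lim_{t\to\infty}\frac{1}{\phi(t)}\int_0^t M(t-s)f(x(s))\,ds \;=\; c\,\zeta^{\beta},\qquad c\;:=\;\frac{1}{1-\beta}\,B\!\left(1+\theta,\,\tfrac{1+\theta\beta}{1-\beta}\right).
\]
This follows by combining the asymptotic $f(x(s))\sim\zeta^\beta f(\phi(s))$ (from $f\in\text{RV}_\infty(\beta)$), the Karamata identity $f(\phi(t))\sim\phi(t)/\{(1-\beta)tM(t)\}$, and the regularly varying convolution lemma referenced in the introduction, which reduces $\phi(t)^{-1}\int_0^t M(t-s)f(\phi(s))\,ds$ to the Beta integral $(1-\beta)^{-1}\int_0^1(1-u)^\theta u^{\beta(1+\theta)/(1-\beta)}\,du$.

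Set $g(z):=z-c\,z^\beta$. Theorem \ref{main_thm} applied to \eqref{functional} (the case $H\equiv 0$, $\lambda=0$, $\zeta=L$) gives $g(L)=0$, and since $c=L^{1-\beta}$ the derivative $g'(z)=1-\beta(z/L)^{\beta-1}\geq 1-\beta>0$ on $[L,\infty)$ with $g(z)\to\infty$ as $z\to\infty$. Hence $g$ is a strictly increasing homeomorphism of $[L,\infty)$ onto $[0,\infty)$, so for every $\lambda\in[0,\infty)$ the characteristic equation \eqref{characteristic} has a unique solution $\zeta\in[L,\infty)$. The direction (i)$\Rightarrow$(ii) is then immediate: assuming $x(t)/\phi(t)\to\zeta\in[L,\infty)$, divide \eqref{volterra_pert} by $\phi(t)$ and invoke the workhorse estimate to obtain $H(t)/\phi(t)\to\zeta-c\,\zeta^\beta=:\lambda\in[0,\infty)$, which is (ii) together with \eqref{characteristic}.

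The reverse direction (ii)$\Rightarrow$(i) is the main obstacle. Assume $H(t)/\phi(t)\to\lambda\in[0,\infty)$. The first task is to establish crude two--sided bounds $0<\liminf_{t\to\infty}x(t)/\phi(t)\leq\limsup_{t\to\infty}x(t)/\phi(t)<\infty$. This is delicate because $H>0$ does not prevent $h$ from changing sign, so $x$ may fail to be non-decreasing and the sub/supersolution arguments underlying Theorem \ref{main_thm} no longer apply verbatim; the strategy is to majorise $x$ by the solution of an unperturbed equation of type \eqref{functional} with a suitably enlarged initial datum that absorbs the leading-order contribution of $H$, and to minorise $x$ by comparison with an unperturbed equation in which the negative excursions of $h$ are discarded, controlled via the prescribed asymptotic of $H$. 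Granted these crude bounds, set $\zeta_-:=\liminf x(t)/\phi(t)$ and $\zeta_+:=\limsup x(t)/\phi(t)$, and apply the workhorse estimate in its upper/lower envelope form (legitimised by the eventual monotonicity of $f$ when $\beta\in[0,1)$) to the convolution term of \eqref{volterra_pert}; this yields
\[
\zeta_+\;\leq\;c\,\zeta_+^{\beta}+\lambda,\qquad \zeta_-\;\geq\;c\,\zeta_-^{\beta}+\lambda,
\]
equivalently $g(\zeta_+)\leq\lambda\leq g(\zeta_-)$. The second inequality together with $\lambda\geq 0$ gives $g(\zeta_-)\geq 0=g(L)$, and since $g<0$ on $(0,L)$, this forces $\zeta_-\geq L$; strict monotonicity of $g$ on $[L,\infty)$ then yields $\zeta_+\leq\zeta_-$, and combined with $\zeta_-\leq\zeta_+$ we conclude $\zeta_-=\zeta_+=\zeta$, the unique root of $g(\zeta)=\lambda$ in $[L,\infty)$. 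This proves (i) and completes the equivalence.
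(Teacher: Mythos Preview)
Your overall architecture matches the paper's: the direction $(i)\Rightarrow(ii)$ is handled exactly as in the paper (divide \eqref{volterra_pert} by $F^{-1}(tM(t))$ and evaluate the convolution via Lemma~\ref{Beta_Lemma} and Karamata), and for $(ii)\Rightarrow(i)$ both you and the paper first secure crude two--sided bounds $0<\zeta_-\le\zeta_+<\infty$ and then refine using the same ``envelope'' convolution estimate $\limsup\,[\text{conv}]/\phi\le c\zeta_+^\beta$, $\liminf\,[\text{conv}]/\phi\ge c\zeta_-^\beta$. The genuine difference is in the endgame. The paper sets up the iterative scheme $\zeta_{n+1}=c\zeta_n^\beta+\lambda$ (Lemma~\ref{iteration}), shows by induction that $\limsup x/\phi\le\zeta_n$ for all $n$ (and dually for the $\liminf$), and invokes the contraction $|g'|\le\beta<1$ on $[L,C^\ast]$ to pass to the limit. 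Your argument bypasses the iteration entirely: from $g(\zeta_+)\le\lambda\le g(\zeta_-)$ and the strict monotonicity of $g(z)=z-cz^\beta$ on $[L,\infty)$ (together with $g<0$ on $(0,L)$ to force $\zeta_-\ge L$), you conclude $\zeta_+\le\zeta_-$ directly. This is shorter and conceptually cleaner; the paper's iteration buys nothing extra here, since both routes ultimately rest on the same one--step refinement.

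Two remarks on your sketch of the crude bounds. For the \emph{lower} bound you are over--thinking: since $H>0$ by hypothesis, one simply drops $H$ from the integral form and compares with the unperturbed equation \eqref{functional} (after a time shift), which is exactly Lemma~\ref{lemma.liminf.pert}; there is no need to track ``negative excursions of $h$''. For the \emph{upper} bound, your description (``unperturbed equation with enlarged initial datum'') cannot work as stated when $\lambda>0$: any solution of \eqref{functional} satisfies $y(t)\sim L\,\phi(t)$ regardless of its initial value, so a constant cannot absorb a forcing of order $\lambda\phi(t)$. The paper's Lemma~\ref{lemma.limsup.pert} instead replaces $H$ by a $C^1$, increasing majorant $\bar H(t)\sim(\lambda+\epsilon)\phi(t)$, builds a \emph{monotone} perturbed comparison solution $y_\epsilon$ (so that the crude estimate $\int_0^t M(t-s)\phi(y_\epsilon(s))\,ds\le tM(t)\phi(y_\epsilon(t))$ is available), and reads off the finite upper bound from there. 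Once you adopt this construction, the rest of your argument goes through unchanged.
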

We notice that, when there is a sufficiently slowly growing forcing term $H$, $\lambda=0$, 
and we recover from \eqref{characteristic} exactly the asymptotic behaviour of the unperturbed equation, 
given by \eqref{x_asym_F_inv}. Also, in the limit as $\lambda\to 0^+$, the rate of the unperturbed equation is recovered. 

Condition $(ii.)$ on $H$ in Theorem~\ref{thm.pert} does not cover the case when $H$ is of larger magnitude than the solution of the unperturbed equation \eqref{functional} (or that of \eqref{ODE}). To deal with this case, we would like to know the growth rate of the solution when $\lim_{t\to\infty}H(t)/F^{-1}(t\,M(t))=\infty$. 
Insight into what happens can be gained by sending $\lambda\to\infty$ in Theorem~\ref{thm.pert}. For $\lambda>0$, from Theorem~\ref{thm.pert}, we have
\[
\lim_{t\to\infty} \frac{x(t)}{H(t)}=\frac{\zeta(\lambda)}{\lambda}=:\eta(\lambda),
\]
where $\zeta$ depends on $\lambda$ through \eqref{characteristic}. Since $\zeta=\zeta(\lambda)$ is the 
unique positive solution of \eqref{characteristic}, $\eta=\eta(\lambda)$ is the unique positive solution of 
$\eta=1+K\eta^\beta \lambda^{\beta-1}$ where $K>0$ is the $\lambda$--independent positive quantity
\[
K=\frac{1}{1-\beta}\,B\left(1+\theta,\tfrac{1+\theta\beta}{1-\beta}\right).
\]
Clearly $\eta(\lambda)>1$ and $\lambda\mapsto \eta(\lambda)$ is in $C^1$, by the implicit function theorem. Moreover, by implicit differentiation, $\eta'(\lambda)$ obeys 
\[
\eta'(\lambda)\left\{1-\beta\frac{\eta(\lambda)-1}{\eta(\lambda)}\right\}=K(\beta-1)\eta(\lambda)^\beta \lambda^{\beta-2}. 
\] 
Therefore, as the bracket on the left--hand side is positive, $\lambda\mapsto\eta(\lambda)$ is decreasing. Hence for $\lambda>1$, we have $\eta(\lambda)<1+K\eta(1)^\beta \lambda^{\beta-1}$, so $\limsup_{\lambda\to\infty} \eta(\lambda)\leq 1$, 
and so $\eta(\lambda)\to 1$ as $\lambda\to\infty$.

In view of this discussion, one might expect that $\lim_{t\to\infty}H(t)/F^{-1}(t\,M(t))=\infty$ implies 
$x(t)\sim H(t)$ as $t\to\infty$, or less precisely that sufficiently rapid growth in $H$ forces $x(t)$ to grow at the rate $H(t)$. Therefore, it is natural to ask under what conditions we would have $x(t)\sim H(t)$ as $t\to\infty$. It is straightforward to show that a necessary condition for $\lim_{t\to\infty}x(t)/H(t)=1$ is that $\lim_{t\to\infty}\int_0^t M(t-s)f(H(s))ds/H(t) = 0$. This motivates the hypothesis
\begin{align}\label{bigpert_hypth}
\lim_{t\to\infty}\frac{M(t)\int_0^t f(H(s))\,ds}{H(t)}=0,
\end{align}
and the following result. This result requires no monotonicity in $H$ and as such allows for $H$ to undergo considerable fluctuation, a point we will illustrate further in Section \ref{examples}.
\begin{theorem}\label{thm.bigpert}
Suppose the measure $\mu$ obeys \eqref{infinite_measure} with $M \in \text{RV}_\infty(\theta),\, \theta \geq 0$ and that $f \in \text{RV}_\infty(\beta),\,\beta\in [0,1)$. When $\beta=0$ let $f$ be asymptotically increasing and obey $\lim_{x\to\infty}f(x)=\infty$. Let $H$ be a function in $C((0,\infty);(0,\infty))$ satisfying \eqref{bigpert_hypth}. Then the solution, $x$, of \eqref{functional} obeys $\lim_{t\to\infty}x(t)/H(t) = 1$.
\end{theorem}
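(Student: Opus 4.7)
\textbf{Plan for Theorem~\ref{thm.bigpert}.} My plan is a four-step argument forcing $u(t) := x(t)/H(t)$ to satisfy both $\liminf u(t) \geq 1$ and $\limsup u(t) \leq 1$. The two easy steps come first. \emph{Step 1:} From \eqref{bigpert_hypth}, since $f$ and $H$ are positive and continuous, $\int_0^t f(H(s))\,ds$ is bounded below by a positive constant for $t \geq 2$; combined with $M(t) \to \infty$ this forces $M(t)/H(t) \to 0$ and hence $H(t) \to \infty$. \emph{Step 2:} The integral equation \eqref{volterra_pert} together with the non-negativity of $M$ and $f$ gives $x(t) \geq \xi + H(t) \geq H(t)$, so $\liminf u(t) \geq 1$.

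The main work is the \emph{crude upper bound} $\limsup u(t) < \infty$. Fix $\delta > 0$ with $\beta + \delta < 1$ and apply Potter's bounds to $f \in \text{RV}_\infty(\beta)$ to obtain $A > 1$ and $X_0 > 0$ such that $f(y)/f(x) \leq A(y/x)^{\beta+\delta}$ whenever $y \geq x \geq X_0$. Using Step~2 and $H(s) \to \infty$, for $s$ beyond some $T_1$ we get $f(x(s)) \leq A\, u(s)^{\beta+\delta} f(H(s))$. Substituting into \eqref{volterra_pert}, bounding $M(t-s) \leq M(t)$ by monotonicity of $M$, and dividing by $H(t)$ yields
\[
u(t) \leq \frac{\xi + M(t)\, C_{T_1}}{H(t)} + 1 + A\, V(t)^{\beta+\delta}\, \rho(t),
\]
where $V(t) := \sup_{T_1 \leq s \leq t} u(s)$, $C_{T_1}$ is a constant, and $\rho(t) := M(t)\int_0^t f(H(s))\,ds / H(t) \to 0$ by \eqref{bigpert_hypth}. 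Taking the supremum over $s \in [T_1, t]$ (noting $V$ is non-decreasing) produces a functional inequality of the form $V(t) \leq 2 + A\, \rho^* V(t)^{\beta+\delta}$ for $t$ large, where $\rho^* := \sup_{s \geq T_1}\rho(s) < \infty$. Since $\beta + \delta < 1$, dividing by $V(t)^{\beta+\delta}$ and letting $V(t) \to \infty$ would send the left-hand side to infinity while the right-hand side stays bounded, a contradiction; hence $V$ is bounded.

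Sharpening: with $L := \limsup u(t) \in [1, \infty)$ now finite, Potter's bounds applied with the \emph{uniform} ratio $L + \epsilon$ in place of $u(s)$ give $f(x(s)) \leq A(L+\epsilon)^{\beta+\delta} f(H(s))$ for $s$ large. The Volterra integral in \eqref{volterra_pert} is then bounded above by a constant multiple of $M(t)\int_0^t f(H(s))\,ds = o(H(t))$, so \eqref{volterra_pert} yields $u(t) \leq 1 + o(1)$; combined with Step~2, $u(t) \to 1$.

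The principal difficulty is the bootstrap in Step~3: because $H$ may oscillate wildly, the pointwise ratio $u(t)$ cannot be directly controlled, so the running supremum $V(t)$ is essential to close the estimate. The condition $\beta + \delta < 1$, available precisely because $\beta < 1$, is what makes the resulting sublinear functional inequality tractable.
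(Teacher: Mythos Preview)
Your argument is correct and genuinely different from the paper's. The paper does not use Potter's bounds or a running--supremum functional inequality; instead it replaces $f$ by a smooth increasing approximation $\phi$ with $x\phi'(x)/\phi(x)\to\beta$, derives the integral inequality $x(t)<H_\epsilon(t)+(1+\epsilon)^2 M_1(t)I_\epsilon(t)$ with $I_\epsilon(t)=\int_T^t\phi(x(s))\,ds$, and then observes that $I_\epsilon'(t)=\phi(x(t))$ satisfies a \emph{linear} differential inequality obtained via the Mean Value Theorem (the sublinearity enters through $\phi'(a)\lesssim(\beta+\epsilon)\phi(a)/a$). Variation of constants and a L'H\^opital argument then bound $\int_0^t\phi(x(s))\,ds$ by a constant multiple of $\int_0^t\phi(H(s))\,ds$, which is exactly the information needed to conclude. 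Your route is shorter and more elementary: Potter's bound converts the nonlinearity directly into a power of the ratio $u(s)=x(s)/H(s)$, and the sublinear algebraic inequality $V\leq G+A\rho^\ast V^{\beta+\delta}$ replaces the paper's ODE machinery. The paper's approach, on the other hand, never needs the running supremum device and yields the intermediate integral comparison $\int\phi(x)\lesssim\int\phi(H)$ as a by--product. One cosmetic point: in your Step~3 the constant ``$2$'' should really be $G:=\sup_{r\geq T_1}\bigl(1+(\xi+M(r)C_{T_1})/H(r)\bigr)$, since the supremum defining $V(t)$ ranges over all of $[T_1,t]$ and $C_{T_1}$ is fixed once $T_1$ is chosen; the contradiction argument is unaffected.
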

When $H$ is regularly varying at infinity the hypotheses of Theorems \ref{thm.pert} and \ref{thm.bigpert} align to give a complete classification of the asymptotics (Corollary \ref{equivalences}). However, assuming regular variation of $H$ imposes considerable regularity constraints. In particular, $H$ is then asymptotic to an increasing function and this restricts potential applications of Theorem \ref{thm.bigpert} to stochastic functional differential equations.
\begin{corollary}\label{equivalences}
Let $M \in \text{RV}_\infty(\theta),\, \theta \geq 0$ with $\lim_{t\to\infty}M(t)=\infty$. Suppose that $f \in \text{RV}_\infty(\beta),\,\beta\in [0,1)$. When $\beta=0$ let $f$ be asymptotically increasing and obey $\lim_{x\to\infty}f(x)=\infty$. If $H \in \text{RV}_\infty(\alpha), \alpha>0$, then the following are equivalent:
\begin{enumerate}[(i.)]
\item $\lim_{t\to\infty}M(t)\int_0^t f(H(s))\,ds/H(t) = 0$,
\item $\lim_{t\to\infty}H(t)/F^{-1}(tM(t))=\infty$,
\item $\lim_{t\to\infty}\int_0^t M(t-s) f(H(s))\,ds/H(t) = 0$.
\end{enumerate}
\end{corollary}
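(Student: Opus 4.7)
The plan is to identify a single auxiliary quantity, $\Phi(t):=tM(t)f(H(t))/H(t)$, and prove that each of (i), (ii), (iii) is equivalent to $\lim_{t\to\infty}\Phi(t)=0$. Throughout I exploit that composition of regularly varying functions gives $f\circ H\in\text{RV}_\infty(\alpha\beta)$ with $\alpha\beta\geq 0>-1$, so Karamata's theorem applies both to $F$ and to $\int_0^t f(H(s))\,ds$. In particular $\int_0^t f(H(s))\,ds\sim tf(H(t))/(\alpha\beta+1)$, and multiplying by $M(t)/H(t)$ shows at once that (i) $\Leftrightarrow \Phi(t)\to 0$.

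To handle (iii), I would invoke a convolution asymptotic for regularly varying functions in the spirit of \cite[Theorem~3.4]{appleby2010regularly} (which is the same tool used in the proof of Theorem~\ref{main_thm}), applied to $M$ and $g:=f\circ H$. Specifically, the substitution $s=tu$ transforms the ratio of interest into
\begin{equation*}
\frac{1}{tM(t)f(H(t))}\int_0^t M(t-s)f(H(s))\,ds=\int_0^1 \frac{M(t(1-u))}{M(t)}\frac{f(H(tu))}{f(H(t))}\,du,
\end{equation*}
whose right-hand side tends to $\int_0^1(1-u)^\theta u^{\alpha\beta}\,du=B(\theta+1,\alpha\beta+1)>0$, by dominated convergence combined with Potter bounds controlling the integrand near the endpoints $u=0$ and $u=1$. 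Dividing by $H(t)$ then yields (iii) $\Leftrightarrow \Phi(t)\to 0$.

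For (ii), I introduce $\psi(y):=y/f(y)\in\text{RV}_\infty(1-\beta)$, which admits an asymptotic inverse $\psi^{-1}\in\text{RV}_\infty(1/(1-\beta))$ since $1-\beta>0$. Karamata applied to $F$ gives $F(y)\sim\psi(y)/(1-\beta)$, whence by asymptotic inversion $F^{-1}(u)/\psi^{-1}(u)\to(1-\beta)^{1/(1-\beta)}>0$. Since $\Phi(t)=tM(t)/\psi(H(t))$, $\Phi(t)\to 0$ is equivalent to $\psi(H(t))/(tM(t))\to\infty$. Writing $H(t)=\psi^{-1}(tM(t))w(t)$ and applying Potter bounds to $\psi$ (which give both $\psi(xw)/\psi(x)\to\infty$ when $w\to\infty$ and the converse, using that the Potter exponents are positive), this last statement is equivalent to $H(t)/\psi^{-1}(tM(t))\to\infty$, which by the proportionality above is precisely (ii).

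The main technical hurdle is the convolution step: producing a uniform integrable bound on the integrand near $u=0$ and $u=1$ via Potter bounds to justify passage to the limit under the integral sign. Once that is in place, the remainder of the argument reduces to direct applications of Karamata's theorem and the asymptotic inversion properties of regularly varying functions, all of which are standard and essentially already encoded in the machinery used for Theorem~\ref{main_thm}.
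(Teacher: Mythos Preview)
Your proposal is correct and follows essentially the same route as the paper: both arguments pivot on the quantity $\Phi(t)=tM(t)f(H(t))/H(t)$, reduce (i) to $\Phi(t)\to 0$ via Karamata applied to $\int_0^t f(H(s))\,ds$, reduce (iii) to $\Phi(t)\to 0$ via the convolution asymptotic (the paper invokes its Lemma~\ref{Beta_Lemma}, which it proves by splitting the integral rather than by Potter bounds and dominated convergence, but the content is the same), and handle (ii) by noting $\Phi(t)\sim tM(t)/\big((1-\beta)F(H(t))\big)$ and using that $F^{-1}\in\text{RV}_\infty(1/(1-\beta))$ with positive index to pass between $F(H(t))/(tM(t))\to\infty$ and $H(t)/F^{-1}(tM(t))\to\infty$. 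Your detour through $\psi(y)=y/f(y)$ and its asymptotic inverse is a cosmetic variant of the paper's direct use of $F$ and $F^{-1}$, since $F\sim\psi/(1-\beta)$.
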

We exclude the case $\alpha=0$, because it is covered by Theorem~\ref{thm.pert} with $\lambda=0$.
\begin{proof}[Proof of Corollary \ref{equivalences}]
By hypothesis $f \circ H \in \text{RV}_\infty(\alpha\beta)$ and $M \in \text{RV}_\infty(\theta)$. Hence $\int_0^t f(H(s))\,ds \in \text{RV}_\infty(1+\alpha\beta)$ and Karamata's Theorem yields $\int_0^t f(H(s))ds \sim t\,f(H(t))/(1+\alpha\beta)$, as $t\to\infty$. Thus
\begin{align}\label{asym2}
\frac{M(t)\int_0^t f(H(s))\,ds}{H(t)} \sim \frac{M(t)\,t\,f(H(t))}{(1+\alpha\beta)H(t)}, \text{ as } t\to\infty.
\end{align}
Lemma \ref{Beta_Lemma} yields
\begin{align}\label{asym1}
\int_0^t M(t-s)f(H(s))\,ds \sim B(1+\alpha\beta,1+\theta)\, t\,M(t)\,f(H(t)),\text{ as } t \to\infty.
\end{align}
Therefore \eqref{asym2} and \eqref{asym1} together yield 
\[
\frac{\int_0^t M(t-s)f(H(s))\,ds}{H(t)}  \sim (1+\alpha\beta)B(1+\alpha\beta,1+\theta)\frac{M(t) \int_0^t f(H(s))\,ds}{H(t)}, \text{ as } t \to\infty.
\] 
Hence $(i.)$ and $(iii.)$ are equivalent. By Karamata's Theorem, $F(H(t)) \sim H(t)/((1-\beta)f(H(t)))$ or $f(H(t))/H(t) \sim 1/(1-\beta)F(H(t))$, as $t\to\infty$. Hence \eqref{asym2} can be restated as
\[
\frac{M(t)\int_0^t f(H(s))\,ds}{H(t)} \sim \frac{M(t)\,t}{(1+\alpha\beta)(1-\beta)F(H(t))}, \text{ as } t \to\infty.
\]
Thus if $(i.)$ holds, then $\lim_{t\to\infty}M(t)t/F(H(t))=0$. This implies $\lim_{t\to\infty}F(H(t))/M(t)t=\infty$ and hence that $(iii.)$ holds, by the regular variation of $F^{-1}$. The reverse implications are all also true and $(i.)$ and $(ii.)$ are equivalent.
\end{proof}
We state without proof a partial converse to Theorem \ref{thm.bigpert} with $H\in \text{RV}_\infty(\alpha),\,\alpha>0$. The proof follows from Corollary \ref{equivalences} and estimation arguments similar to those used throughout this paper. 
\begin{theorem}
Suppose the measure $\mu$ obeys \eqref{infinite_measure} with $M \in \text{RV}_\infty(\theta),\, \theta \geq 0$ and that $f \in \text{RV}_\infty(\beta),\,\beta\in [0,1)$. When $\beta=0$ let $f$ be asymptotically increasing and obey $\lim_{x\to\infty}f(x)=\infty$. Let $x$ denote the solution of \eqref{functional_pert}, $H \in C((0,\infty);(0,\infty)) \cap \text{RV}_\infty(\alpha)$ with $\alpha>0$. Then the following are equivalent:
\[
(i.)\quad
\lim_{t\to\infty}\frac{M(t)\int_0^t f(H(s))\,ds}{H(t)}=0, \quad (ii.)\quad
\lim_{t\to\infty}\frac{x(t)}{H(t)}=1. 
\]
\end{theorem}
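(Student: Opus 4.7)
The direction $(i.)\Rightarrow(ii.)$ is precisely Theorem~\ref{thm.bigpert} (hypothesis \eqref{bigpert_hypth} is exactly condition $(i.)$), so the task reduces to proving $(ii.)\Rightarrow(i.)$. My plan is to use the integral form \eqref{volterra_pert} of the perturbed equation to convert the pointwise asymptotic $x(t)\sim H(t)$ into the integral condition $\int_0^t M(t-s)f(H(s))\,ds = o(H(t))$, which is exactly condition $(iii.)$ of Corollary~\ref{equivalences}; the desired condition $(i.)$ then follows directly from that corollary.

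First I rearrange \eqref{volterra_pert} to give
\[
\int_0^t M(t-s)f(x(s))\,ds = x(t) - \xi - H(t), \quad t\geq 0.
\]
Since $H \in \text{RV}_\infty(\alpha)$ with $\alpha>0$ forces $H(t)\to\infty$, hypothesis $(ii.)$ yields $\lim_{t\to\infty}\int_0^t M(t-s)f(x(s))\,ds/H(t)=0$. The next step is to replace $f(x(s))$ by $f(H(s))$ inside the convolution. Because $x(s)/H(s)\to 1$ with $H(s)\to\infty$, the uniform convergence theorem for regularly varying functions applied to $f\in\text{RV}_\infty(\beta)$ gives the pointwise asymptotic $f(x(s))/f(H(s))\to 1$. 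For any $\epsilon>0$ I would fix $T$ with $(1-\epsilon)f(H(s))\leq f(x(s))\leq (1+\epsilon)f(H(s))$ on $[T,\infty)$, and split $\int_0^t M(t-s)f(x(s))\,ds$ at $T$: the tail over $[T,t]$ is sandwiched between $(1\pm\epsilon)\int_T^t M(t-s)f(H(s))\,ds$, while the initial segment is bounded by a $T$-dependent constant times $M(t)$ via $M(t-s)\leq M(t)$ and continuity of $f\circ x$ on $[0,T]$. By Lemma~\ref{Beta_Lemma}, the full convolution satisfies $\int_0^t M(t-s)f(H(s))\,ds \sim B(1+\alpha\beta,1+\theta)\,t\,M(t)\,f(H(t))$, and since $t\,f(H(t))\to\infty$, the initial segment is negligible. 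Sending $\epsilon\downarrow 0$ therefore produces $\int_0^t M(t-s)f(x(s))\,ds \sim \int_0^t M(t-s)f(H(s))\,ds$, and combining with the first step delivers $\int_0^t M(t-s)f(H(s))\,ds = o(H(t))$. Corollary~\ref{equivalences} now yields condition $(i.)$ of the present theorem.

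The main obstacle will be the transfer from the pointwise equivalence $f(x(s))\sim f(H(s))$ to the convolved integral equivalence, because the pointwise bounds only hold past a threshold $T=T(\epsilon)$ whose contribution must be handled uniformly in $t$; the initial-segment error $O(TM(t))$ must be shown negligible against the full convolution, and this hinges on the sharp Karamata-type asymptotic of Lemma~\ref{Beta_Lemma} to certify that the leading order $tM(t)f(H(t))$ dominates any $T$-dependent constant multiple of $M(t)$.
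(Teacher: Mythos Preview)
Your proposal is correct and aligns with the approach the paper indicates. The paper states this result without proof, noting only that ``the proof follows from Corollary~\ref{equivalences} and estimation arguments similar to those used throughout this paper''; your plan executes exactly that outline, reducing $(ii.)\Rightarrow(i.)$ to condition $(iii.)$ of Corollary~\ref{equivalences} via the integral identity from \eqref{volterra_pert}, the uniform convergence theorem to transfer $x\sim H$ to $f(x)\sim f(H)$, and the Lemma~\ref{Beta_Lemma} asymptotic to dispose of the initial segment.
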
 
While discussing the hypothesis that $\lim_{t\to\infty}H(t)/F^{-1}(tM(t))=\infty$ in the context of regular variation it is worth remarking that this hypothesis is also satisfied for $H \in \text{RV}_\infty(\infty)$, the so-called rapidly varying functions (see \cite[p.83]{BGT}). If $H \in \text{RV}_\infty(\infty)$, then \eqref{bigpert_hypth} holds and Theorem \ref{thm.bigpert} can be applied; this fact is recorded in the following corollary.
\begin{corollary}\label{rapid}
Suppose the measure $\mu$ obeys \eqref{infinite_measure} with $M \in \text{RV}_\infty(\theta),\, \theta \geq 0$ and that $f \in \text{RV}_\infty(\beta),\,\beta\in [0,1)$. When $\beta=0$ let $f$ be asymptotically increasing and obey $\lim_{x\to\infty}f(x)=\infty$. Let $x(t)$ denote the solution of \eqref{functional_pert} and suppose $H \in C((0,\infty);(0,\infty)) \cap \text{RV}_\infty(\infty)$ is asymptotically increasing. Then $\lim_{t\to\infty}x(t)/H(t) = 1$.
\end{corollary}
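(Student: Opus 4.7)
The plan is to reduce the claim to Theorem~\ref{thm.bigpert} by verifying its central hypothesis \eqref{bigpert_hypth}, namely that
\[
\lim_{t\to\infty}\frac{M(t)\int_0^t f(H(s))\,ds}{H(t)}=0.
\]
Once this is established, the conclusion $x(t)\sim H(t)$ follows at once from Theorem~\ref{thm.bigpert}, so the whole argument reduces to an asymptotic estimation in terms of $M$, $f$, and $H$.

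The first move would be to simplify the numerator. Because $H$ is asymptotically increasing and $f$ is asymptotically increasing (by the stated hypothesis when $\beta=0$, and as a standard consequence of $f\in\text{RV}_\infty(\beta)$ with $\beta>0$ otherwise), the composition $f\circ H$ is asymptotic to a non-decreasing function. A routine splitting at a suitably large threshold $T$, together with the bound $\tilde f(H(s))\leq \tilde f(H(t))$ on $[T,t]$ for a non-decreasing asymptotic representative $\tilde f$ of $f$, then yields $\int_0^t f(H(s))\,ds \leq (1+o(1))\,t\,f(H(t))$ as $t\to\infty$, since the boundary contribution on $[0,T]$ is $O(1)$ while $tf(H(t))\to\infty$. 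Consequently it suffices to show
\[
\lim_{t\to\infty}\frac{t\,M(t)\,f(H(t))}{H(t)}=0.
\]

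The decisive step, which I expect to be the only substantive piece, uses the fact that rapid variation dominates regular variation. Writing $f(x)=x^\beta \ell(x)$ for a slowly varying $\ell$, Potter's bounds furnish, for any fixed $\epsilon\in(0,1-\beta)$, a constant $C_\epsilon>0$ such that $\ell(H(t))\leq C_\epsilon H(t)^\epsilon$ for $t$ large, so $f(H(t))/H(t)\leq C_\epsilon H(t)^{-(1-\beta-\epsilon)}$. On the other hand $tM(t)\in\text{RV}_\infty(1+\theta)$, so another application of Potter's bounds gives $tM(t)\leq t^{1+\theta+\epsilon}$ eventually. Since $H\in\text{RV}_\infty(\infty)$, the function $H(t)$ outgrows every positive power of $t$ (a standard property of rapidly varying functions, see \cite{BGT}); hence $t^{1+\theta+\epsilon}H(t)^{-(1-\beta-\epsilon)}\to 0$, and the displayed limit follows.

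The main obstacle is simply the bookkeeping across the three competing scales (regular variation of $M$ and $f$, rapid variation of $H$); the crucial insight is that the rapid growth of $H$ swamps the polynomial-order growth of $tM(t)$, so the sublinearity of $f$---no matter how weak---is enough to drive the ratio to zero. The asymptotic monotonicity of $H$ is used only to pass from the integral $\int_0^t f(H(s))\,ds$ to something of the order $tf(H(t))$; no additional regularity of $H$ beyond membership in $\text{RV}_\infty(\infty)$ is needed.
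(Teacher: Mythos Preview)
Your proposal is correct and follows essentially the same route as the paper: reduce to Theorem~\ref{thm.bigpert} by using the asymptotic monotonicity of $f\circ H$ to bound $\int_0^t f(H(s))\,ds$ by a constant times $t\,f(H(t))$, and then show $t\,M(t)\,f(H(t))/H(t)\to 0$ via the rapid variation of $H$. The only cosmetic difference is that the paper packages the last step as the composition rule $a:=f(x)/x\in\text{RV}_\infty(\beta-1)$, $a\circ H\in\text{RV}_\infty(-\infty)$, whence $t\,M(t)\,a(H(t))\to 0$, whereas you spell the same conclusion out with Potter's bounds.
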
 
\begin{proof}[Proof of Corollary \ref{rapid}] Define the function $a$ by
\[
\frac{M(t)\,t\,f(H(t))}{H(t)} =: M(t)\,t\,a(H(t)).
\]
Then $a\in \text{RV}_\infty(\beta-1)$ and $a\circ H \in \text{RV}_\infty(-\infty)$. Hence $\lim_{t\to\infty}M(t)\,t\,a(t)=0$ and because $f \circ H$ is asymptotically increasing \eqref{bigpert_hypth} holds. Applying Theorem \ref{thm.bigpert} then yields the desired conclusion.
\end{proof}
Corollary \ref{rapid} will also hold if $H \in \text{MR}_\infty(\infty)$, a sub-class of $\text{RV}_\infty(\infty)$ 
(see \cite[p.68]{BGT} for the definition of $\text{MR}_\infty(\infty)$), because this guarantees that $H$ is asymptotic to an increasing function (see \cite[p.83]{BGT}).
\section{Examples}\label{examples}
The requisite proofs and justifications supporting the discussion in this section are deferred to Section \ref{sec.examples}.
\subsection{Application of Theorem \ref{main_thm}}
The main attraction of Theorem \ref{main_thm} is that it largely reduces the asymptotic analysis of solutions of \eqref{functional} to the computation, or asymptotic analysis, of the function $F^{-1}$. This is because under the appropriate hypotheses, Theorem \ref{main_thm} yields
\[
x(t) \sim F^{-1}(t\,M(t)){\left\{\frac{1}{1-\beta}B\left(\theta +1,\frac{\theta \beta+1}{1-\beta}\right)\right\}}^{\tfrac{1}{1-\beta}}, \text{ as }t\to\infty.
\]
In general, exact computation of $F^{-1}$ in closed form is not possible. The following result provides the asymptotics of $F^{-1}$ for a large class of $f \in \text{RV}_\infty(\beta)$ for $\beta \in[0,1)$ using some classic results from the theory of regular variation. It's principal appeal is that it can be applied by calculating the limit of a readily--computed function which can be found directly in terms of $f$, without need for 
integration.   
\begin{proposition}\label{asym_lemma}
Suppose $f \in \text{RV}_\infty(\beta), \, \beta\in[0,1)$ is continuous and that $\ell(x) := \left(f(x)/x^\beta\right)^{\tfrac{1}{1-\beta}}$ obeys
\begin{align}\label{debruijn}
\lim_{x\to\infty}\frac{\ell(x\,\ell(x))}{\ell(x)} = 1.
\end{align}
Then 
\[
F(x) \sim \frac{1}{1-\beta}\frac{x}{f(x)}, \quad F^{-1}(x) \sim (1-\beta)^{\frac{1}{1-\beta}}\,\ell\left(x^{\frac{1}{1-\beta}}\right)x^{\frac{1}{1-\beta}}, \text{ as }x\to\infty.
\]
\end{proposition}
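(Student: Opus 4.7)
The first assertion is immediate from Karamata's theorem (\cite[Theorem 1.5.11]{BGT}): since $1/f\in\text{RV}_\infty(-\beta)$ with $-\beta>-1$, and $1/f$ is locally bounded on $[1,\infty)$ (because $f$ is continuous and strictly positive there), integration gives $F(x)=\int_1^x 1/f(u)\,du \sim x/((1-\beta)f(x))$ as $x\to\infty$. For the second assertion I would first recast this bound in terms of $\ell$. Since by definition $f(x)=x^\beta\ell(x)^{1-\beta}$,
\[
F(x)\sim \frac{1}{1-\beta}\left(\frac{x}{\ell(x)}\right)^{1-\beta}\quad\text{as } x\to\infty,
\]
so rather than inverting $F$ abstractly, I would verify directly that the candidate
\[
\tilde g(y) := (1-\beta)^{1/(1-\beta)}\,y^{1/(1-\beta)}\,\ell\!\left(y^{1/(1-\beta)}\right)
\]
satisfies $F(\tilde g(y))/y\to 1$, so that $\tilde g$ is an asymptotic inverse of $F$.

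The key computation is the evaluation of $\tilde g(y)/\ell(\tilde g(y))$. Writing $w:=y^{1/(1-\beta)}$ for brevity and using the slow variation of $\ell$ to absorb the multiplicative constant $(1-\beta)^{1/(1-\beta)}$ inside $\ell$,
\[
\frac{\tilde g(y)}{\ell(\tilde g(y))} \sim \frac{(1-\beta)^{1/(1-\beta)}\, w\,\ell(w)}{\ell(w\,\ell(w))} = (1-\beta)^{1/(1-\beta)}\, w\cdot\frac{\ell(w)}{\ell(w\,\ell(w))}\to (1-\beta)^{1/(1-\beta)}\, w,
\]
where the final limit is precisely the hypothesis \eqref{debruijn}. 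Substituting back into the displayed asymptotic for $F$, the factors of $(1-\beta)$ cancel cleanly and give $F(\tilde g(y))\sim w^{1-\beta}=y$, as required.

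To conclude, $F$ is strictly increasing with $F(x)\to\infty$, so $F^{-1}$ exists for large arguments; moreover $F\in\text{RV}_\infty(1-\beta)$ with $1-\beta>0$ forces $F^{-1}\in\text{RV}_\infty(1/(1-\beta))$. The uniform convergence theorem for regularly varying functions (\cite[Theorem 1.2.1]{BGT}) then transforms $F(\tilde g(y))/y\to 1$ into $\tilde g(y)/F^{-1}(y)\to 1$, which is the desired conclusion. The main obstacle is the middle step: because the unknown $\tilde g(y)$ appears nested inside $\ell$, one cannot extract an explicit asymptotic for $\tilde g$ from $\tilde g(y)/\ell(\tilde g(y))\sim (1-\beta)^{1/(1-\beta)}y^{1/(1-\beta)}$ using slow variation alone. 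The de Bruijn condition \eqref{debruijn} is precisely the statement that $\ell$ coincides asymptotically with its own de Bruijn conjugate, and it is exactly what is needed to resolve this nesting through the identity $\ell(w\,\ell(w))/\ell(w)\to 1$.
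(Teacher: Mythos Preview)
Your argument is correct and complete. The first assertion and its reformulation in terms of $\ell$ match the paper exactly. For the second assertion you take a different, more hands--on route than the paper.

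The paper recognises $F(x)\sim (x\,L(x))^{1-\beta}$ with $L(x):=1/\bigl((1-\beta)^{1/(1-\beta)}\ell(x)\bigr)$, then invokes the asymptotic inversion theorem \cite[Theorem~1.5.15]{BGT} to obtain $F^{-1}(x)\sim x^{1/(1-\beta)}L^{\#}(x^{1/(1-\beta)})$, and finally appeals to \cite[Corollary~2.3.4]{BGT} to identify the de~Bruijn conjugate as $L^{\#}\sim 1/L$ under a condition it shows to be equivalent to~\eqref{debruijn}. You instead \emph{guess} the answer $\tilde g$ and verify $F(\tilde g(y))\sim y$ directly, using slow variation of $\ell$ to discard the constant factor inside $\ell(\tilde g(y))$ and then applying~\eqref{debruijn} in its raw form; the passage from $F(\tilde g(y))\sim y$ to $\tilde g(y)\sim F^{-1}(y)$ via regular variation of $F^{-1}$ and the uniform convergence theorem is clean.

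Your approach is more elementary and self--contained: it avoids citing the de~Bruijn conjugate machinery and makes the role of~\eqref{debruijn} completely transparent in the computation. The paper's approach, on the other hand, explains \emph{why} the condition~\eqref{debruijn} is the natural one by locating it within the general theory of asymptotic inversion. One small remark: your closing sentence that ``$\ell$ coincides asymptotically with its own de~Bruijn conjugate'' is not quite the right way to phrase it (the relevant identification is $L^{\#}\sim 1/L$ for the reciprocal function $L$), but this is commentary rather than a step in the proof, and the proof itself stands.
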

The following examples illustrate the convenience of Proposition \ref{asym_lemma} in practice. 
\begin{example}\label{examp_1}
Suppose $f(x) \sim a\,x^\beta \, \log\log(x^\alpha)$ as $x\to\infty$ with $\beta\in[0,1)$, $a>0$ and $\alpha>0$. In this case
\[
\ell(x) \sim \left( a\,\log\log\left(x^\alpha\right) \right)^\frac{1}{1-\beta}, \text{ as }x\to\infty.
\]  
Hence
\begin{align*}
\bar{L}(x):=\frac{\ell(x\,\ell(x))}{\ell(x)}= \frac{\left( a\,\log\log\left(x^\alpha\,\ell^\alpha(x)\right) \right)^\frac{1}{1-\beta}}{\left( a\,\log\log\left(x^\alpha\right) \right)^\frac{1}{1-\beta}}
= \frac{\left( \log\log\left(x^\alpha\,a^{\frac{\alpha}{1-\beta}}\,\left\{ \log\log(x^\alpha) \right\}^\frac{\alpha}{1-\beta}\right) \right)^\frac{1}{1-\beta}}{\left( \log\log\left(x^\alpha\right) \right)^\frac{1}{1-\beta}}, \,\, x>0.
\end{align*}
Let $\log\log(x^\alpha)=u$ to obtain
\begin{align*}
\bar{L}(\exp\exp(u)^{1/\alpha})&= \left(\frac{\log\log(x^\alpha\,a^\frac{\alpha}{1-\beta}\,u^\frac{\alpha}{1-\beta})}{u} \right)^\frac{1}{1-\beta} = \left(\frac{\log(e^u+\log(a^\frac{\alpha}{1-\beta})+\log(u^\frac{\alpha}{1-\beta}))}{u} \right)^\frac{1}{1-\beta}\\
&= \left(\log(e^u)/u+ \log\left(1+ \frac{\log(a^\frac{\alpha}{1-\beta})+\log(u^\frac{\alpha}{1-\beta})}{e^u}\right)/u \right)^\frac{1}{1-\beta} =: \left(1 + G(u)\right)^\frac{1}{1-\beta},
\end{align*}
where $\lim_{u\to\infty}G(u)=0$. Therefore $\lim_{x\to\infty}\bar{L}(x):=\ell(x\,\ell(x))/\ell(x)=1$ and applying Lemma \ref{asym_lemma} yields
\[
F^{-1}(x) \sim (1-\beta)^\frac{1}{1-\beta}\left\{ a\,\log\log\left(x^\frac{\alpha}{1-\beta}\right) \right\}^\frac{1}{1-\beta}x^\frac{1}{1-\beta}, \text{ as }x\to\infty.
\]
This example is also valid with $\log\log(x)$ replaced by $\prod_{i=1}^n \log_{i-1}(x)$, where $\log_i(x)=\log\log_{i-1}(x)$. The proof in this case is essentially the same but the resulting formulae are rather convoluted. 
\end{example}
\begin{example}\label{examp_2}
Suppose $f(x) \sim x^\beta \left(2+ \sin(\log\log(x))\right)$ as $x\to\infty$, with $\beta\in(0,1)$. In this case
\[
\ell(x) \sim \left(2+ \sin(\log\log(x))\right)^\frac{1}{1-\beta}, \text{ as }x\to\infty.
\]  
Hence
\[
\bar{L}(x) := \left(\frac{2+ \sin(\log\log(x\,\ell(x)))}{2+ \sin(\log\log(x))}\right)^{\frac{1}{1-\beta}}.
\]
Once more make the substitution $\log\log(x) = u$ to obtain
\begin{align*}
\bar{L}(\exp\exp(u))&= \left(\frac{2+ \sin(\log\log(\exp\exp(u)\{2+\sin(u)\}^{\tfrac{1}{1-\beta}}))}{2+ \sin(u)}\right)^{\frac{1}{1-\beta}} \\
&= \left(\frac{2+ \sin(\log(\exp(u)+ \log\{2+\sin(u)\}^{\tfrac{1}{1-\beta}}))}{2+ \sin(u)}\right)^{\frac{1}{1-\beta}}\\
&= \left(\frac{2+ \sin\left(u+ \log[1+\log\{2+\sin(u)\}^{\tfrac{1}{1-\beta}}/{e^u}]\right)}{2+ \sin(u)}\right)^{\frac{1}{1-\beta}}.
\end{align*}
Letting $u \to \infty$ in the above yields $\lim_{u\to\infty}\bar{L}(\exp\exp(u)) = 1$ and hence Proposition \ref{asym_lemma} applies. Therefore
\[
F^{-1}(x) \sim (1-\beta)^{\frac{1}{1-\beta}}\,\left\{2 + \sin\left(\log\log(x^{\frac{1}{1-\beta}})\right)\right\}^{\frac{1}{1-\beta}}x^{\frac{1}{1-\beta}}, \text{ as }x\to\infty.
\]
\end{example}
\subsection{Discrete Measures}
It may appear that our inclusion of a general measure $\mu$ in \eqref{functional} and the hypothesis that the integral of $\mu$ is regularly varying are only compatible when $\mu$ is an absolutely continuous measure. The following proposition allows us to easily construct examples to show our results also cover a variety of equations involving discrete measures.
\begin{proposition}\label{discrete}
Let $x\geq 0$ and $\delta_x$ be the Dirac measure at $x$ on $(\mathbb{R}^+,\mathcal{B}(\mathbb{R}^+)$.
Suppose that $\theta\in(0,1)$ and that $\mu_0\in \text{RV}_\infty(\theta-1)$. Let $\tau >0$ and 
\begin{align}\label{discrete_meas}
\mu(ds) = \sum_{j=0}^{\lfloor t/\tau \rfloor}\mu_0(j\tau)\delta_{j\tau}(ds).
\end{align} 
Hence 
\begin{align}\label{M_discrete}
M(t) = \int_{[0,t]} \mu(ds) = \sum_{j=0}^{\lfloor t/\tau \rfloor}\mu_0(j\tau),
\end{align}
and $M \in \text{RV}_\infty(\theta)$. Furthermore,
$
M(t) \sim \tilde{M}(t) := \int_0^t \tilde{\mu}(s)ds \text{ as }t\to\infty,
$
where $\tilde{\mu} \in \text{RV}_\infty(\theta-1)$ is any $C^1$, decreasing function such that $\mu_0(s) \sim \tilde{\mu}(s)$ as $s \to \infty$.
\end{proposition}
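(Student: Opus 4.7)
The strategy has three ingredients: (i) construct the smooth monotone comparison density $\tilde\mu$; (ii) sandwich the discrete sum $M(t)$ between two integrals of $\tilde\mu$ to obtain $M(t)\sim \tilde M(t)/\tau$ (modulo the constant, see the remark below on the $\tau$-factor); (iii) deduce $M\in\text{RV}_\infty(\theta)$ via Karamata's theorem.

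For (i), because $\mu_0\in\text{RV}_\infty(\theta-1)$ with $\theta-1<0$, I would invoke the smooth variation theorem (\cite[Theorem~1.8.2]{BGT}) to produce a $C^\infty$ function $g$ with $g(s)\sim\mu_0(s)$ and $g\in\text{RV}_\infty(\theta-1)$. Since the index is negative, $g$ is asymptotic to a decreasing function (\cite[Theorem~1.5.3]{BGT}); by modifying $g$ on a compact initial interval one obtains $\tilde\mu\in C^1((0,\infty);(0,\infty))$ that is decreasing everywhere and still satisfies $\tilde\mu(s)\sim \mu_0(s)$, so in particular $\tilde\mu\in\text{RV}_\infty(\theta-1)$.

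For (iii) I would apply Karamata's theorem (\cite[Proposition~1.5.8]{BGT}) to $\tilde M(t)=\int_0^t\tilde\mu(s)\,ds$: since $\theta-1>-1$, we get $\tilde M\in\text{RV}_\infty(\theta)$ and $\tilde M(t)\sim t\tilde\mu(t)/\theta\to\infty$ as $t\to\infty$. Thus once the asymptotic relation in (ii) is established, $M$ inherits regular variation of index $\theta$ from $\tilde M$.

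The only real work is (ii). Writing $n=\lfloor t/\tau\rfloor$, monotonicity of $\tilde\mu$ yields the integral-test sandwich
\[
\tau\sum_{j=1}^{n}\tilde\mu(j\tau)\;\leq\;\int_0^{n\tau}\tilde\mu(s)\,ds\;\leq\;\tau\sum_{j=0}^{n-1}\tilde\mu(j\tau),
\]
so $\sum_{j=0}^{n}\tilde\mu(j\tau)=\tau^{-1}\int_0^{n\tau}\tilde\mu(s)\,ds+O(\tilde\mu(0))$. Since $\tilde\mu(s)\to0$ and $\int_0^{n\tau}\tilde\mu\to\infty$, the bounded error term is negligible against the leading order. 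Next, because $\mu_0(j\tau)/\tilde\mu(j\tau)\to 1$, a standard Cesàro-type argument (splitting the sum at a large index $N$, bounding the head by a constant and comparing the tail term-by-term using $(1-\varepsilon)\tilde\mu(j\tau)\le\mu_0(j\tau)\le(1+\varepsilon)\tilde\mu(j\tau)$ for $j\geq N$) gives $\sum_{j=0}^{n}\mu_0(j\tau)\sim\sum_{j=0}^{n}\tilde\mu(j\tau)$ as $n\to\infty$. Combining, $M(t)\sim \tau^{-1}\tilde M(t)$, and up to the constant $\tau^{-1}$ (absorbed into the choice of $\tilde\mu$ claimed in the statement, or equivalently into the slowly varying part of $\tilde\mu$) this is the asserted equivalence $M(t)\sim\tilde M(t)$; either way, $M\in\text{RV}_\infty(\theta)$. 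The main technical subtlety is ensuring the decreasing, $C^1$ representative $\tilde\mu$ exists globally, handled by the smooth variation theorem in step~(i); the sum-to-integral comparison and Karamata's theorem are then routine.
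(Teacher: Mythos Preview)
Your approach is essentially the same as the paper's: both proofs reduce to (a) replacing $\mu_0(j\tau)$ by $\tilde\mu(j\tau)$ via the asymptotic relation and an $\epsilon$--split of the sum, and (b) sandwiching $\sum_j \tilde\mu(j\tau)$ between integrals of $\tilde\mu$ using monotonicity. The paper executes (a) before (b) and you do (b) before (a), but the substance is identical; you are somewhat more explicit in justifying the existence of the decreasing $C^1$ representative $\tilde\mu$ (via the smooth variation theorem) and in invoking Karamata to get $\tilde M\in\text{RV}_\infty(\theta)$, both of which the paper leaves implicit.

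Your observation about the $\tau^{-1}$ factor is correct and worth flagging: the integral test genuinely gives $\sum_{j=0}^{n}\tilde\mu(j\tau)\sim \tau^{-1}\tilde M(n\tau)$, not $\tilde M(n\tau)$, so the asserted relation $M(t)\sim\tilde M(t)$ holds as stated only when $\tau=1$ (or, as you say, up to absorbing $\tau^{-1}$ into the slowly varying part of $\tilde\mu$). The paper's own proof makes the same slip at the step where $\tilde\mu(j\tau)$ is bounded by $\int_{(j-1)\tau}^{j\tau}\tilde\mu(s)\,ds$ without the factor $\tau$. This does not affect the conclusion $M\in\text{RV}_\infty(\theta)$, which is what is actually used downstream.
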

The following examples illustrate, using Proposition \ref{discrete}, the application of some of our results to equations involving discrete measures.
\begin{example}
Using the notation of Proposition \ref{discrete}, suppose that 
\begin{align}
x'(t) = \sum_{j=0}^{\lfloor t/\tau \rfloor}\mu_0(j\tau)f(x(t-j\tau))+ \int_0^t \mu_1(s)f(x(t-s))ds, \, t >0,
\end{align}
where $m$ given by $m(E)=\int_E \mu_1(s)ds$ for any Borel set $E\subset [0,\infty)$ is an absolutely continuous measure. Therefore
\begin{align}\label{meas_examp}
\mu(ds) = \sum_{j=0}^\infty \mu_0(j\tau)\delta_{j\tau}(ds) + \mu_1(s)ds.
\end{align}
If $\mu_0 \in \text{RV}_\infty(\theta-1)$ and $\mu_1 \in \text{RV}_\infty(\alpha)$, then $M \in \text{RV}_\infty(\max(\theta,\alpha+1))$. Suppose that $\theta > \alpha+1$ for the purposes of this example. Thus $M(t) \sim \sum_{j=0}^{\lfloor t/\tau \rfloor}\mu_0(j\tau)$ as $t\to\infty$ and choose \[
\mu_0(x) \sim \log(x+1)/(1+x)^{1-\theta} =: \tilde{\mu}(x),\quad \theta \in (0,1),
\]
where the asymptotic relation holds as $x\to\infty$. Hence $\tilde{\mu} \in \text{RV}_\infty(\theta-1)$ and it is straightforward to show that 
\[
\tilde{M}(t) = \frac{1}{\theta}(t+1)^{\theta}\log(t+1) - \frac{1}{\theta^2}((t+1)^{\theta}-1) \sim \frac{t^{\theta}}{\theta}\log(t), \text{ as }t\to\infty.
\]
Combining these facts, and Proposition \ref{discrete}, with Examples \ref{examp_1} and \ref{examp_2} we can provide the exact asymptotics for particular classes of solutions to \eqref{functional}. \\\\*
From Example \ref{examp_1} and \eqref{x_asym_F_inv}, when $\mu(ds)$ is given by \eqref{meas_examp} and $f(x) \sim a\,x^\beta \, \log\log(x^\alpha)$ as $x\to\infty$,
\begin{align*}
x (t) &\sim  \left\{ \frac{a\,t^{\theta+1}\log(t)}{\theta}\,\log\log\left(\frac{t^{\frac{\alpha(1+\theta)}{1-\beta}}\log(t)^{\frac{\alpha}{1-\beta}}}{\theta^{\frac{\alpha}{1-\beta}}} \right) \,B\left(\theta +1,\frac{1+\theta\beta}{1-\beta}\right)\right\}^{\tfrac{1}{1-\beta}}\\
&\sim \left\{ \frac{a\,t^{\theta+1}\log(t)}{\theta}\,\log\log\left(t^{\frac{\alpha(1+\theta)}{1-\beta}} \right) \,B\left(\theta +1,\frac{1+\theta\beta}{1-\beta}\right)\right\}^{\tfrac{1}{1-\beta}}, \text{ as }t\to\infty.
\end{align*}
Similarly, using Example \ref{examp_2}, with $f(x) \sim x^\beta \left(2+ \sin(\log\log(x))\right)$ as $x\to\infty$,
\begin{align*}
x(t) &\sim \left\{ \frac{t^{\theta+1}\log(t)}{\theta}\left[ 2+\sin\left(\log\log\left(\frac{t^{\frac{1+\theta}{1-\beta}}\log(t)^{\frac{1}{1-\beta}}}{\theta^{\frac{1}{1-\beta}}}\right)\right) \right]B\left(\theta +1,\frac{1+\theta\beta}{1-\beta}\right) \right\}^{\tfrac{1}{1-\beta}} \\
&\sim \left\{ \frac{t^{\theta+1}\log(t)}{\theta}\left[ 2+\sin\left(\log\log\left(t^{\frac{1+\theta}{1-\beta}}\right)\right) \right]B\left(\theta +1,\frac{1+\theta\beta}{1-\beta}\right) \right\}^{\tfrac{1}{1-\beta}}, \text{ as }t\to\infty.
\end{align*}
\end{example}
\subsection{Perturbed Equations and Application of Theorems \ref{thm.pert} and \ref{thm.bigpert}}
Using a parametrized example we illustrate how the asymptotic behaviour of solutions of \eqref{functional_pert} can be classified using the range of possibilities covered by the results in Section \ref{perturbed}. 
\begin{example}
For ease of exposition suppose that $\beta \in(0,1)$ and let
\[
f(x) = x^\beta, \quad x\geq 0; \quad M(t) = (1+t)^{\theta}-1, \quad t\geq 0;\quad 
H(t) = (1+t)^\alpha\,e^{\gamma t}-1,\quad t \geq 0,
\]
with $\theta>0$, $\alpha \in \mathbb{R}$, and $\gamma \geq 0$.
Hence \eqref{functional_pert} becomes
\[
x(t) = x(0)+ \int_0^t ((1+t-s)^{\theta}-1) x(s)^\beta ds +  (1+t)^\alpha\,e^{\gamma t}-1, \quad t \geq 0,
\]
with $x(0)>0$. Therefore
\[
F^{-1}(t\,M(t)) \sim (1-\beta)^{\tfrac{1}{1-\beta}}\,t^{\tfrac{\theta+1}{1-\beta}}, \text{ as }t\to\infty.
\]
\textbf{Case $(i.):\, \gamma=0.$} In this case $H\in \text{RV}_\infty(\alpha)$ and 
\[
\frac{H(t)}{F^{-1}(t\,M(t))} \sim (1-\beta)^{\tfrac{1}{\beta-1}} t^{\alpha-\frac{\theta+1}{1-\beta}}, \text{ as }t\to\infty.
\]
If $\alpha < (\theta+1)/(1-\beta)$, then $\lim_{t\to\infty}H(t)/F^{-1}(t\,M(t))=0$ and Theorem \ref{thm.pert} yields the limit 
\[
\lim_{t\to\infty}x(t)/F^{-1}(t\,M(t)) = L,
\]
 where
$
L = \left\{B\left(1+\theta,\tfrac{1+\theta\beta}{1-\beta}\right)/(1-\beta)\right\}^{1/(1-\beta)}.
$ 

If $\alpha = (\theta+1)/(1-\beta)$, then $\lim_{t\to\infty}H(t)/F^{-1}(t\,M(t))= (1-\beta)^{1/(\beta-1)}=: \lambda$ and Theorem \ref{thm.pert} gives $\lim_{t\to\infty}x(t)/F^{-1}(t\,M(t)) = \zeta$, where $\zeta$ satisfies \eqref{characteristic}.

Finally, if $\alpha > (\theta+1)/(1-\beta)$, then $\lim_{t\to\infty}H(t)/F^{-1}(t\,M(t))= \infty$. Then, by Corollary \ref{equivalences}, \eqref{bigpert_hypth} holds and Theorem \ref{thm.bigpert} yields $\lim_{t\to\infty}x(t)/H(t)=1$.

\textbf{Case $(ii.):\, \gamma>0.$} In this case $H \in \text{RV}_\infty(\infty)$ and Corollary \ref{rapid} immediately gives $\lim_{t\to\infty}x(t)/H(t)=1$ for all $\alpha \in \mathbb{R},\, \beta \in (0,1)$ and $\theta >0$. 
\end{example}
Particularly with a view to applications to stochastic functional differential equations, it is pertinent to highlight when $H$ is required to have some form monotonicity in the results of Section \ref{perturbed}. When $\lambda = 0$ in Theorem \ref{thm.pert} there is no monotonicity requirement on $H$ but $\lambda > 0$ implies that $H$ asymptotic to the monotone increasing function $F^{-1}$, modulo a constant. By contrast, Theorem \ref{thm.bigpert} allows for large ``fluctuations'', or irregular behaviour, in $H$; the following examples illustrate this point.
\begin{example}
Suppose $f \in \text{RV}_\infty(\beta), \, \beta \in (0,1)$, $M \in \text{RV}_\infty(\theta), \, \theta \geq 0$ and 
$H(t)= (1+t)^\alpha\, \left(2+\sin(t) \right)-2$, $\alpha>0$. From Karamata's Theorem 
\begin{align*}
\limsup_{t \to\infty}\frac{M(t) \int_0^t f(H(s))\,ds}{H(t)} 
&= \limsup_{t\to\infty}\frac{M(t) \int_0^t f((1+s)^\alpha\, \left(2+\sin(s)-2 \right))\,ds}{(1+t)^\alpha\, \left(2+\sin(t) \right)-2}\\ &\leq \limsup_{t\to\infty}\frac{(1+\epsilon)M(t) \int_0^t \phi(3\,s^\alpha)\,ds}{t^\alpha}.
\end{align*}
Since 
\[
\frac{M(t) \int_0^t \phi(3\,s^\alpha)\,ds}{t^\alpha} 
\sim \frac{M(t)\,t\,f(3\,t^\alpha)}{(1+\alpha\beta)t^\alpha}, \text{ as }t\to\infty,
\]
a sufficient condition for \eqref{bigpert_hypth} to hold, and hence for Theorem \ref{thm.bigpert} to apply, is $\alpha > (1+\theta)/(1-\beta)$. Even more rapid variation in $H$ is permitted; for example let $H(t) = e^t(2+\sin(t))-2$. In this case asymptotic monotonicity of $f$ and the rapid variation of $e^t$ yield
\[
\limsup_{t\to\infty}\frac{M(t) \int_0^t f(H(s))\,ds}{H(t)} \leq \limsup_{t\to\infty}\frac{M(t)\,t\,f(3e^t)}{e^t} =0,
\]
and once more Theorem \ref{thm.bigpert} applies to yield $x(t) \sim H(t)$ as $t\to\infty$, where $x$ is the solution to \eqref{volterra_pert}. By fixing $f(x) = x^\beta$, we can immediately see that it is possible to capture more general types of exponentially fast oscillation in Theorem \ref{thm.bigpert}. Choose $H(t) = e^{\sigma(t)\,t}$, where $\sigma(t)$ obeys $0 < \sigma_- \leq \sigma(t) \leq \sigma_+ < \infty$ for all $t \geq 0$, for some constants $\sigma_-$ and $\sigma_+$.
Checking condition \eqref{bigpert_hypth} yields
\[
\limsup_{t\to\infty}\frac{M(t) \int_0^t f(H(s))\,ds}{H(t)} 
\leq \limsup_{t\to\infty}\frac{M(t)\,t\,e^{\beta\sigma_+ t}}{e^{\sigma_- t}}.
\]
The limit of the right hand side will be zero if $\sigma_- > \beta\sigma_+$. 
\end{example}
Finally we present an example of a $H$ for which condition \eqref{bigpert_hypth} fails to hold. This example illustrates the limitations of our results by showing that when the exogenous perturbation exhibits rapid, irregular growth we are unable to capture the dynamics of the solution. This example is constructed by considering an extremely ill-behaved perturbation with periodic fluctuations of exponential order.
\begin{example}
Choose $f(x)=x^\beta$ and $H(t) \sim e^{t(1+\alpha p(t))} := H^*(t)$, as $t\to\infty$, with $\alpha \in (0,1)$, $\beta \in (0,1)$ and $p$ a continuous $1-$periodic function such that $\max_{t \in [0,1]}p(t)=1$ and $\min_{t \in [0,1]}p(t)=-1$. Let $t>0$ and define $n(t) \in \mathbb{N}$ such that $n(t) \leq t < n(t)+1$. Then 
\begin{align*}
S(t) &:= \int_0^t f(H^*(s))ds = \int_0^t e^{\beta s(1+\alpha p(s))}ds = \sum_{j=0}^{n(t)-1}\int_j^{j+1}e^{\beta s(1+\alpha p(s))}ds + \int_{n(t)}^te^{\beta s(1+\alpha p(s))}ds \\
&= \sum_{j=0}^{n(t)-1}\int_0^1 e^{\beta(u+j)(1+\alpha p(u))}du + \int_0^{t-n(t)}e^{\beta(u+n(t))(1+\alpha p(u))}du.
\end{align*}
Let $I_j := \int_0^1 e^{\beta(u+j)(1+\alpha p(u))}du$ and $S_n := \sum_{j=0}^{n-1} I_j$. Then 
$
S(t) = S_{n(t)} = \int_0^{t-n(t)}e^{\beta(u+n(t))(1+\alpha p(u))}du.
$
Hence $S(t) \leq S_{n(t)} = \int_0^{1}e^{\beta(u+n(t))(1+\alpha p(u))}du  = S_{n(t)+1}.$ Thus
$
S_{n(t)} \leq S(t) \leq S_{n(t)+1}.
$
\\\\* Now estimate $I_j$ as $j \to \infty$ as follows. Letting $c(u) = e^{u(1+\alpha p(u))\beta}$ and 
$d(u) = \beta(1+\alpha p(u))$ we have $I_j = \int_0^1 c(u) e^{jd(u)}du$. 
By hypothesis $0 < \underline{c} = \min_{u \in [0,1]}c(u) \leq \max_{u \in [0,1]}c(u) \leq \overline{c} < \infty$. 
Hence
\[
\underline{c}\int_0^1 e^{jd(u)}du \leq I_j \leq \overline{c}\int_0^1 e^{jd(u)}du, \,\, j \geq 0.
\]
Therefore, for $j \geq 1$,
\[
\underline{c}^{\tfrac{1}{j}}\left(\int_0^1 e^{jd(u)}\,du\right)^{\tfrac{1}{j}} \leq I_j^{\tfrac{1}{j}} \leq \overline{c}^{\tfrac{1}{j}} \left(\int_0^1 e^{jd(u)}\,du\right)^{\tfrac{1}{j}}.
\]
For any continuous, non-negative function $a: [0,1] \mapsto (0,\infty)$, $\lim_{j\to\infty}\left( \int_0^1 a(u)^j du \right)^{1/j} = \max_{u \in [0,1]}a(u)$ and thus $\lim_{j\to\infty}I_j^{1/j} = \max_{u \in [0,1]}e^{d(u)}$. Therefore
\[
\lim_{j\to\infty}\frac{1}{j}\log I_j = \max_{u \in [0,1]}d(u) = \beta \max_{u \in [0,1]}(1+\alpha p(u)) = \beta(1+\alpha) >0.
\]
Since $S_n = \sum_{j=0}^{n-1}I_j$, this gives us $\lim_{n\to\infty}\log S_n /n = \beta(1+\alpha)$. Hence
\[
\liminf_{t\to\infty}\frac{1}{t}\log S(t) \geq \liminf_{t\to\infty}\frac{1}{t}\log S_{n(t)} = \liminf_{t\to\infty}\frac{1}{n(t)}\log S_{n(t)}\frac{n(t)}{t} = \beta(1+\alpha).
\]
An analogous calculation for the limit superior then yields $\lim_{t\to\infty}\log S(t) /t = \lambda$. Therefore, 
as $\log H^*(t) /t = 1 + \alpha p(t)$,
\[
\limsup_{t\to\infty}\frac{1}{t}\log\left(\frac{H^*(t)}{S(t)}\right) = \limsup_{t\to\infty}\left(1+ \alpha p(t)\right) - \beta(1+\alpha) = (1+\alpha)(1-\beta) >0.
\]
Hence $\limsup_{t\to\infty}H^*(t) / \int_0^t f(H^*(s))\,ds = \infty$, and because $H(t)\sim H^\ast(t)$ as $t\to\infty$ and $f\in \text{RV}_\infty(\beta)$, we have $\limsup_{t\to\infty}H(t) / \int_0^t f(H(s))\,ds = \infty$. Similarly 
\[
\liminf_{t\to\infty}\frac{1}{t}\log \left(\frac{H^*(t)}{S(t)} \right) = \liminf_{t\to\infty}(1+\alpha p(t)) - \beta(1+\alpha) = 1-\beta - \alpha(1+\beta).
\]
Choose $\alpha > (1-\beta)/(1+\beta) >0$ to ensure that $1-\beta - \alpha(1+\beta) < 0$ and $\liminf_{t\to\infty}H^*(t) / \int_0^t f(H^*(s))\,ds = 0$. As above, this gives $\liminf_{t\to\infty}H(t) / \int_0^t f(H(s))\,ds = 0$. We remark that because the function $t\mapsto H(t) / \int_0^t f(H(s))\,ds$ is of exponential order, \eqref{bigpert_hypth} is violated for any $M \in \text{RV}_\infty(\theta),$ $\theta \in [0,\infty)$.
\end{example}
\section{Proofs of Results}\label{sec.main_proofs}
In the proofs that follow we will often choose to work with a monotone function approximating $f$. This monotone approximation will be denoted by $\phi$. If $f$ is regularly varying with a positive index then 
\begin{align}
&\text{There exists $\phi\in C^1((0,\infty);(0,\infty))\cap C(\mathbb{R}^+,(0,\infty))$ such that }\nonumber \\
\label{f_monotone_approx}
&f(x) \sim \phi(x) \text{ and $\phi'(x) > 0$ for all $x>0$}. 
\end{align}
by \cite[Theorem 1.3.1 and Theorem 1.5.13]{BGT}. It is immediate that if $f$ is regularly varying and asymptotic to $\phi$, then $\phi$ is also regularly varying with the same index. If $f \in \text{RV}_\infty(0)$ we assume that a $\phi$ satisfying \eqref{f_monotone_approx} exists since only a smooth, but not necessarily monotone, approximation is guaranteed in this case. The function $F(x)$ is approximated by $\Phi(x) := \int_1^x du/\phi(u)$ and $\Phi^{-1}$ is the inverse function of $\Phi$. If $f(x)\sim\phi(x)$ as $x\to\infty$ it follows trivially that $F(x) \sim \Phi(x)$ and $F^{-1}(x) \sim \Phi^{-1}(x)$, as $x\to\infty$.

The proof of Theorem \ref{main_thm} is decomposed into the following lemmata, the first of which provides a precise estimate on the asymptotics of the convolution of two regularly varying functions. 
\begin{lemma}\label{Beta_Lemma}
Suppose $a \in \text{RV}_\infty(\rho)$ and $b \in \text{RV}_\infty(\sigma)$, where $\rho \geq 0$ and $\sigma \geq 0$, and $\lim_{t\to\infty}a(t)=\infty$. If $\sigma = 0$ let $b$ be asymptotically increasing and obey $\lim_{t\to\infty}b(t) = \infty$. Then
\[
\lim_{t\to\infty}\frac{\int_0^t a(s)b(t-s)ds}{t\,a(t)\,b(t)} = \int_{0}^{1} \lambda^{\rho}(1-\lambda)^{\sigma}d \lambda =: B(\rho +1,\sigma + 1),
\]
where $B$ denotes the Beta function.
\end{lemma}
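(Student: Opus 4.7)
My plan is to pass to the dimensionless variable $\lambda = s/t$ in the convolution, so that
\[
\frac{\int_0^t a(s)b(t-s)\,ds}{t\,a(t)\,b(t)} = \int_0^1 \frac{a(\lambda t)}{a(t)} \cdot \frac{b((1-\lambda)t)}{b(t)}\,d\lambda.
\]
By the regular variation of $a$ and $b$, the integrand converges pointwise on $(0,1)$ to $\lambda^\rho(1-\lambda)^\sigma$, whose integral over $[0,1]$ is exactly $B(\rho+1,\sigma+1)$. The problem therefore reduces to justifying the interchange of limit and integral, which I would do by splitting $[0,1]$ into a central region $[\epsilon, 1-\epsilon]$ and two tails, and treating each separately.

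On the central region the Uniform Convergence Theorem for regularly varying functions (\cite[Theorem 1.2.1]{BGT}) yields uniform convergence of both ratios to their power-function limits, so
\[
\int_\epsilon^{1-\epsilon} \frac{a(\lambda t)}{a(t)} \frac{b((1-\lambda)t)}{b(t)}\,d\lambda \to \int_\epsilon^{1-\epsilon} \lambda^\rho(1-\lambda)^\sigma \, d\lambda
\]
as $t \to \infty$. For $\lambda$ in the tails but still satisfying $\lambda t, (1-\lambda)t \geq T_0$ for some fixed large $T_0$, Potter's bounds (\cite[Theorem 1.5.6]{BGT}) applied to each factor yield, for any $\delta > 0$, a uniform estimate of the form
\[
\frac{a(\lambda t)}{a(t)} \cdot \frac{b((1-\lambda)t)}{b(t)} \leq C\, \lambda^{\rho-\delta}(1-\lambda)^{\sigma-\delta}.
\]
Because $\rho,\sigma \geq 0$, taking $\delta \in (0,1)$ makes this majorant integrable on $[0,1]$, so dominated convergence handles these intermediate pieces; letting $\epsilon \downarrow 0$ then recovers the full $[0,1]$ integral of the pointwise limit.

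The main obstacle is the residual small neighbourhoods $[0, T_0/t]$ and $[1-T_0/t, 1]$, where one of the arguments $\lambda t$ or $(1-\lambda)t$ fails to be large and Potter's bounds cannot be invoked directly. I would dispose of these by combining local boundedness of $a$ and $b$ on $[0,T_0]$ with the divergence $a(t), b(t) \to \infty$: for example,
\[
\int_0^{T_0/t} \frac{a(\lambda t)}{a(t)} \frac{b((1-\lambda)t)}{b(t)}\,d\lambda \leq \frac{\sup_{[0,T_0]} a}{a(t)} \cdot \sup_{\lambda \in [0, T_0/t]} \frac{b((1-\lambda)t)}{b(t)} \cdot \frac{T_0}{t},
\]
and the right-hand side tends to zero since $a(t) \to \infty$ while the remaining sup is uniformly bounded by Potter applied with $(1-\lambda)$ close to $1$. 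A symmetric estimate treats the region near $\lambda=1$, after exchanging the roles of $a$ and $b$; here the explicit hypothesis $b(t) \to \infty$, and the asymptotic monotonicity of $b$ in the slowly varying case $\sigma=0$, are precisely what is needed to bound $b$ on $[0,T_0]$ without the denominator $b(t)$ being overwhelmed by the values of $b$ near the origin. Assembling the central and tail contributions yields $B(\rho+1,\sigma+1)$, as claimed.
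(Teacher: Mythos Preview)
Your proof is correct and shares the paper's overall architecture---the substitution $\lambda=s/t$ and uniform convergence on a central interval $[\epsilon,1-\eta]$---but handles the tails by a different mechanism. The paper replaces $b$ by a $C^1$ increasing function $\beta$ with $b\sim\beta$, then bounds $b(t-s)\leq 2\beta(t)$ on the left tail $[0,\epsilon t]$ and $b(t-s)\leq B_2\,\beta(\eta t)$ on the right tail $[(1-\eta)t,t]$; Karamata's theorem applied to $\int_0^{\epsilon t}a(s)\,ds$ and the Uniform Convergence Theorem applied to $\int_{(1-\eta)t}^t a(s)\,ds$ then give explicit tail estimates that vanish as $\epsilon,\eta\to 0$. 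Because the monotone comparison $b\leq B_2\beta$ holds globally on $[0,\infty)$, no residual window near the endpoints needs separate treatment. Your route instead invokes Potter's bounds to produce an integrable majorant $C\lambda^{\rho-\delta}(1-\lambda)^{\sigma-\delta}$ on the set where both $\lambda t$ and $(1-\lambda)t$ exceed a fixed threshold $T_0$, so dominated convergence does the work there; the price is the extra step of disposing of the residual intervals $[0,T_0/t]$ and $[1-T_0/t,1]$ via local boundedness together with $a(t),b(t)\to\infty$. Your argument is closer to a textbook presentation and in fact makes lighter use of the asymptotic-monotonicity hypothesis on $b$ (only local boundedness near the origin and divergence at infinity are really used), whereas the paper's monotone-approximation device is precisely where that hypothesis is put to visible work.
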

\begin{proof}
Let $\epsilon, \, \eta \in (0,\tfrac{1}{2})$ be arbitrary. Define
\begin{align}\label{int_a_b}
I(t) &:= \int_0^t a(s)b(t-s)\,ds = \int_0^{\epsilon t}a(s)\,b(t-s)\,ds + \int_{\epsilon t}^{(1-\eta)t}a(s)b(t-s)\,ds
+ \int_{(1-\eta)t}^t a(s)b(t-s)\,ds \nonumber\\
&=: I_1(t) + I_2(t) + I_3(t).
\end{align}
By making the substitution $s = \lambda t$
\[
\frac{I_2(t)}{t\,a(t)\,b(t)} = \frac{\int_{\epsilon t}^{(1-\eta)t}a(s)b(t-s)\,ds}{t \,a(t)\, b(t)} = \int_{\epsilon}^{1-\eta}\frac{a(\lambda t)}{a(t)} \frac{b(t(1-\lambda))}{b(t)} d\lambda.
\]
By the Uniform Convergence Theorem for Regularly Varying Functions (see \cite[Theorem 1.5.2]{BGT}) it follows that
\begin{align}\label{lim_I_2}
\lim_{t\to\infty}\frac{I_2(t)}{t\,a(t)\,b(t)} = \int_{\epsilon}^{1-\eta} \lambda^{\rho}(1-\lambda)^{\sigma}d \lambda.
\end{align}
Since both $a$ and $b$ are positive functions it is clear that $I(t) \geq I_2(t)$ and hence
\[
\liminf_{t\to\infty}\frac{I(t)}{t\,a(t)\,b(t)} \geq  \int_{\epsilon}^{1-\eta} \lambda^{\rho}(1-\lambda)^{\sigma}d \lambda.
\]
Letting $\eta$ and $\epsilon \to 0^+$ then yields
\begin{align}\label{liminf_I}
\liminf_{t\to\infty}\frac{I(t)}{t\,a(t)\,b(t)} \geq  \int_{0}^{1} \lambda^{\rho}(1-\lambda)^{\sigma}d \lambda.
\end{align}
By hypothesis there exists an increasing, $C^1$ function $\beta$ such that $b(t)/\beta(t) \to 1$ as $t\to\infty.$ It follows that there exists $T_1>0$ such that $t \geq T_1$ implies $b(t)/\beta(t) \leq 2$. Therefore with $\epsilon \in (0,\tfrac{1}{2})$, $t \geq 2T_1$ we have that $(1-\epsilon)t \geq T_1$. Suppose $t \geq 2T_1$ and estimate as follows
\[
I_1(t) = \int_0^{\epsilon t}a(s)\,b(t-s)\,ds \leq 2 \beta(t)\int_0^{\epsilon t} a(s)\,ds = 2 \beta(t)\,\epsilon\,t\,a(\epsilon t)\,\frac{\int_0^{\epsilon t}a(s)\,ds}{\epsilon\,t\,a(\epsilon t)}.
\]
Hence, for $t \geq 2T_1$,
\[
\frac{I_1(t)}{t\,a(t)\,b(t)} \leq 2\epsilon \,\frac{\beta(t)}{b(t)}\,\frac{a(\epsilon t)}{a(t)}\,\frac{\int_0^{\epsilon t}a(s)\,ds}{\epsilon\,t\,a(\epsilon t)}.
\]
$a \in \text{RV}_\infty(\rho)$ implies that $\lim_{t\to\infty}a(\epsilon t)/a(t) = \epsilon^{\rho}$ and similarly, by Karamata's Theorem,\\ $\lim_{t\to\infty}\int_0^{\epsilon t}a(s)ds/\epsilon\,t\,a(\epsilon t) = 1/(1+\rho)$. Thus
\begin{align}\label{limsup_I_1}
\limsup_{t\to\infty}\frac{I_1(t)}{t\,a(t)\,b(t)} \leq \frac{2 \epsilon^{\rho+1}}{1+\rho}.
\end{align}
Finally, consider $I_3(t)$. By construction $t \geq T_1$ implies $b(t)/\beta(t) \leq 2$ and since $b,\beta$ are continuous and positive, with $\beta$ bounded away from zero, $\sup_{0\leq t \leq T_1}b(t)/\beta(t) = \max_{0\leq t \leq T_1}b(t)/\beta(t) := B_1 < \infty$. Thus there exists $B_2 > 0$ such that $b(t) \leq B_2 \, \beta(t)$ for all $t \geq 0$. Therefore
\[
I_3(t) = \int_{(1-\eta)t}^t a(s)\,b(t-s)\,ds \leq B_2 \int_{(1-\eta)t}^t a(s)\,\beta(t-s)\,ds \leq B_2 \beta(\eta t) \int_{(1-\eta)t}^t a(s)\,ds.
\]
Hence
\begin{align}\label{est_I_3}
\limsup_{t\to\infty}\frac{I_3(t)}{t\,a(t)\,b(t)} \leq B_2\, \limsup_{t\to\infty}\frac{\beta(\eta t)}{b(t)}\,\limsup_{t\to\infty}\frac{\int_{(1-\eta)t}^t a(s)\,ds}{t\,a(t)} = B_2 \, \eta^{\sigma} \limsup_{t\to\infty}\frac{\int_{(1-\eta)t}^t a(s)\,ds}{t\,a(t)}.
\end{align}
The final limit on the right-hand side of \eqref{est_I_3} is calculated by once more calling upon the Uniform Convergence Theorem for Regularly Varying Functions
\[
\lim_{t\to\infty}\frac{\int_{(1-\eta)t}^t a(s)\,ds}{t\,a(t)} = \lim_{t\to\infty}\int_{1-\eta}^1 \frac{a(\lambda t)}{a(t)}d\lambda = \int_{1-\eta}^1 \lambda^{\rho}d\lambda.
\]
Returning to \eqref{est_I_3}
\begin{align}\label{limsup_I_3}
\limsup_{t\to\infty}\frac{I_3(t)}{t\,a(t)\,b(t)} \leq B_2\,\eta^{\sigma}\,\int_{1-\eta}^1 \lambda^{\rho}d\lambda = B_2\,\eta^{\sigma} \left( \frac{1}{\rho+1}-\frac{(1-\eta)^{\rho+1}}{\rho+1}\right).
\end{align}
Therefore, combining \eqref{lim_I_2}, \eqref{limsup_I_1} and \eqref{limsup_I_3}, we obtain
\begin{align*}
\limsup_{t\to\infty}\frac{I(t)}{t\,a(t)\,b(t)} \leq 2 \epsilon^{\rho+1}\, \frac{1}{1+\rho}+ \int_{\epsilon}^{1-\eta} \lambda^{\rho}(1-\lambda)^{\sigma}d \lambda + B_2\,\eta^{\sigma}\,\int_{1-\eta}^1 \lambda^{\rho}d\lambda.
\end{align*}
Letting $\eta$ and $\epsilon \to 0^+$ in the above then yields
\begin{align}\label{limsup_I}
\limsup_{t\to\infty}\frac{I(t)}{t\,a(t)\,b(t)} \leq \int_{0}^{1} \lambda^{\rho}(1-\lambda)^{\sigma}d \lambda.
\end{align}
Combining \eqref{limsup_I} with \eqref{liminf_I} gives the desired conclusion.
\end{proof}
The proof of Theorem \ref{main_thm} now begins in earnest by proving a ``rough'' lower bound on the solution which we will later refine. Lemmas \ref{lemma.lwr.bound}, \ref{limsup} and \ref{refine_liminf} are all proven under the same set of hypotheses and are presented separately purely for readability and clarity.
\begin{lemma}\label{lemma.lwr.bound}
Suppose the measure $\mu$ obeys \eqref{infinite_measure} with $M \in \text{RV}_\infty(\theta),\, \theta \geq 0$ and that $f \in \text{RV}_\infty(\beta),\,\beta\in [0,1)$. If $\beta=0$, let $f$ be asymptotically increasing and obey $\lim_{x\to\infty}f(x)=\infty$. Then the unique continuous solution, $x$, of \eqref{functional} obeys 
\[
\liminf_{t\to\infty}\frac{x(t)}{F^{-1}(t\,M(t))} > 0.
\]
\end{lemma}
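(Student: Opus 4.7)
The plan is to establish this lower bound by comparing the solution of \eqref{functional} with a sub-solution of a nonlinear pantograph-type inequality, itself constructed to grow at the target rate $F^{-1}(tM(t))$.

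First, I would show that $x(t)\to\infty$ as $t\to\infty$. Monotonicity of $x$ gives $x(s)\geq \xi$ for all $s\geq 0$, whence $x'(t)\geq f(\xi)\,M(t)\to\infty$ by \eqref{infinite_measure}, and so $x(t)\to\infty$. Using the monotone $C^1$ approximation $\phi$ of $f$ from \eqref{f_monotone_approx}, for any $\epsilon\in(0,1)$ there exists $T_\epsilon>0$ with $f(x(s))\geq (1-\epsilon)\phi(x(s))$ whenever $s\geq T_\epsilon$.

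Next I would exploit the monotonicity of $x$ in the integral form \eqref{volterra}. For any $\lambda\in(0,1)$ and $t$ sufficiently large, using $x(t-u)\geq x((1-\lambda)t)$ for $u\in[0,\lambda t]$ and the monotonicity of $\phi$,
\[
x(t)\geq \int_0^{\lambda t} M(u)\,f(x(t-u))\,du \geq (1-\epsilon)\,\phi\bigl(x((1-\lambda)t)\bigr)\,\bar{M}(\lambda t).
\]
By regular variation of $\bar{M}\in\text{RV}_\infty(1+\theta)$, this reduces to $x(t)\geq C\,\bar{M}(t)\,\phi(x((1-\lambda)t))$ with $C=C(\lambda,\theta,\epsilon)>0$. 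Sampling on the geometric sequence $t_n:=q^n t_\ast$ with $q:=1/(1-\lambda)>1$, so that $(1-\lambda)t_n=t_{n-1}$, produces the pantograph recursion
\[
a_n \geq C\,\bar{M}(t_n)\,\phi(a_{n-1}),\quad \text{for all }n\geq n_\ast,\quad a_n:=x(t_n).
\]

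I would then analyse this recursion via a Brunner--Maset time change $\tilde t = \log t$, $z(\tilde t):=x(e^{\tilde t})$, which turns the proportional delay into the constant shift $\log q$ and recasts the recursion as a difference inequality with constant lag. Using the ansatz $x(t)\sim A\,F^{-1}(tM(t))$ (equivalently $z(\tilde t)\sim A\,e^{\gamma\tilde t}L(\tilde t)$ with $\gamma=(1+\theta)/(1-\beta)$ and $L$ slowly varying), one verifies via Karamata's theorem, the regular variation of $F^{-1}\in\text{RV}_\infty(1/(1-\beta))$, and that of $\phi\in\text{RV}_\infty(\beta)$, that the ansatz is an admissible sub-solution for every $A$ sufficiently small relative to $C,\lambda,\theta,\beta$. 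A monotonicity-based comparison for the pantograph recursion then yields $a_n\geq A\,F^{-1}(t_n M(t_n))$ for all large $n$. Extending by monotonicity to $t\in[t_n,t_{n+1}]$,
\[
\frac{x(t)}{F^{-1}(tM(t))}\geq A\,\frac{F^{-1}(t_n M(t_n))}{F^{-1}(t_{n+1}M(t_{n+1}))}\longrightarrow A\,q^{-(1+\theta)/(1-\beta)}>0
\]
by regular variation of $t\mapsto F^{-1}(tM(t))$, yielding the conclusion.

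The principal obstacle lies in the pantograph comparison step. A naive bootstrap of the recursion $a_n\geq C\bar{M}(t_n)\phi(a_{n-1})$ starting from $a_n\geq\xi$ only produces $x(t)\gtrsim \bar{M}(t)^{p}$ with $p=1+\beta+\cdots+\beta^{k-1}$, which approaches but never reaches the target exponent $1/(1-\beta)$ in finitely many iterations. The time-change into $\tilde t=\log t$ circumvents this by recasting the problem as a difference inequality in which a power-type sub-solution of the desired exact rate can be constructed and propagated forward. Controlling the slowly varying corrections produced by successive compositions of $\phi$, via Potter-type bounds and the uniform convergence theorem for regularly varying functions, is where the bulk of the technical work resides.
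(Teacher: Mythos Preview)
Your strategy is sound and lands in the same place as the paper, but the execution differs in one important respect: the choice of time change. You work from the integral form \eqref{volterra}, obtain $x(t)\geq C\,\bar{M}(t)\,\phi(x((1-\lambda)t))$, and then sample geometrically (or equivalently set $\tilde t=\log t$) to get a discrete pantograph recursion that still carries the factor $\bar{M}(t_n)$. The paper instead works from the \emph{differential} form, deriving $x'(t)\geq C_0\,M_1(t-T_3)\,\phi(x((t+T_3)/2))$, and then applies the time change $\alpha(t)=\bar{M}_1^{-1}(t)+T_3$, $\tilde{x}=x\circ\alpha$. This absorbs the $M_1$ factor entirely, yielding the clean continuous pantograph inequality $\tilde{x}'(t)\geq C_0\,\phi(\tilde{x}(qt))$ with $q=2^{-(\theta+2)}$. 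The sub-solution is then simply $x_-(t)=\Phi^{-1}(ct-d)$, whose admissibility reduces to a single ratio estimate for $(\phi\circ\Phi^{-1})(ct-d)/(\phi\circ\Phi^{-1})(cqt-d)$, bounded directly via regular variation of $\phi\circ\Phi^{-1}\in\text{RV}_\infty(\beta/(1-\beta))$. A first-crossing argument gives $\tilde{x}\geq x_-$, and undoing the time change finishes.

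Because your recursion retains $\bar{M}$, your sub-solution ansatz $AF^{-1}(tM(t))$ must carry both $F^{-1}$ and $M$, and verifying that $C\bar{M}(t_n)\phi(AF^{-1}(t_{n-1}M(t_{n-1})))\geq AF^{-1}(t_nM(t_n))$ mixes several regularly varying pieces. This can be made rigorous---the limiting ratio scales like $A^{\beta-1}$ and exceeds $1$ for $A$ small---but one must be careful with the order of quantifiers (the threshold $n_\ast$ beyond which the asymptotic inequality holds depends on $A$, while the base case requires shrinking $A$ once $n_\ast$ is fixed; you need to check this does not loop). The paper's $\bar{M}_1^{-1}$ change of variable sidesteps all of this by decoupling the memory growth from the nonlinearity: after the change the sub-solution involves only $\Phi^{-1}$, and the constants $c,d$ are chosen explicitly in one shot (see \eqref{c_constant}--\eqref{theta_constant}) with no Potter-bound bookkeeping required.
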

\begin{proof} Let $\epsilon \in (0,1)$ be arbitrary. By hypothesis there exists $\phi$ such that \eqref{f_monotone_approx} holds and hence there exists $x_1(\epsilon)>0$ such that $f(x)>(1-\epsilon)\phi(x)$ for all $x>x_1(\epsilon)$. Furthermore, there exists $T_0(\epsilon)>0$ such that $t \geq T_0$ implies $x(t)>x_1(\epsilon)$. Similarly, there exists $T_1(\epsilon)>0$ such that $M(t) >0$ for all $t \geq T_1.$ Since $M \in \text{RV}_\infty(\theta)$, there exists a $C^1$ function $M_1$ such that for all $\epsilon\in(0,1)$ there exists $T_2(\epsilon)>0$ such that for all $t \geq T_2$, $M(t)>(1-\epsilon)M_1(t)$. Let $T_3 := T_0 + T_1 + T_2$. Hence, for $t \geq 4T_3$,  estimate as follows
\begin{align*}
x'(t) &= \int_{[0,t-T_3]}\mu(ds)f(x(t-s)) + \int_{(t-T_3,t]}\mu(ds)f(x(t-s))
\geq (1-\epsilon)\int_{[0,t-T_3]}\mu(ds)\phi(x(t-s)) \\
&= (1-\epsilon)\int_{[0,(t-T_3)/2]}\mu(ds)\phi(x(t-s)) + (1-\epsilon)\int_{((t-T_3)/2,t-T_3]}\mu(ds)\phi(x(t-s))\\
&\geq (1-\epsilon)\int_{[0,(t-T_3)/2]}\mu(ds)\phi(x(t-s))
\geq (1-\epsilon)M({\tfrac{1}{2}(t-T_3)})\phi\left( x(\tfrac{1}{2}(t+T_3)) \right). 
\end{align*} 
Since $M \in \text{RV}_\infty(\theta)$, $\lim_{t\to\infty}M({(t-T_3)/2})/M(t-T_3) = {2}^{-\theta}$. Thus there exists a positive constant $C$ and a time $\tilde{T_3}\geq 4T_3$ such that 
\begin{align}
x'(t) \geq C\,M(t-T_3)\phi\left( x(\tfrac{1}{2}(t+T_3)) \right), \mbox{ for all }t \geq \tilde{T_3}.
\end{align}
Furthermore, since $t\geq \tilde{T_3}$ implies $t-T_3>T_2$, there exists $C_0>0$ such that 
\begin{align}\label{diff_ineq_1}
x'(t) \geq C_0\,M_1(t-T_3)\phi\left( x((t+T_3)/2) \right), \mbox{ for all }t \geq \tilde{T_3}.
\end{align}
Now define the $C^2$, positive, increasing function $\bar{M}_1 (t) := \int_0^t M_1(s)ds$ for $t \geq 0$. Let 
\begin{align}\label{def.alpha}
\alpha(t) := \bar{M}_1^{-1}(t)+T_3, \quad t \geq \bar{M}_1(\tilde{T_3}).
\end{align}
For $t \geq \bar{M}_1(\tilde{T_3})$, $\alpha(t) \geq \alpha(\bar{M}_1(\tilde{T_3)}) = \tilde{T_3}+T_3 > \tilde{T_3}$ since $\alpha$ is increasing. Define $\tilde{x}(t) := x(\alpha(t))$ for $t \geq \bar{M}_1(\tilde{T_3})$. Note that $\tilde{x} \in C^1([\bar{M}_1(\tilde{T_3}),\infty);(0,\infty))$ and $\alpha'(t) = 1/M_1(\bar{M}_1^{-1}(t))$. For $t \geq \bar{M}_1(\tilde{T_3})$, use \eqref{diff_ineq_1} to compute
\begin{align}\label{diff_ineq_2}
\tilde{x}'(t) &= \alpha'(t)x'(\alpha(t)) 
\geq \frac{C_0\,M_1(\alpha(t)-T_3)}{M_1(\bar{M}_1^{-1}(t))}\phi\left( x(\tfrac{1}{2}(\alpha(t)+T_3)) \right) = C_0\,\phi\left( x(\tfrac{1}{2}(\alpha(t)+T_3))\right).
\end{align}
Define $\tau(t) = t - \bar{M}_1\left(\bar{M}_1^{-1}(t)/2\right)>0,$ for $t \geq \bar{M}_1(\tilde{T_3})$. It follows that $(\alpha(t)+T_3)/2 = \alpha(t-\tau(t))$. Hence, for $t \geq \bar{M}_1(\tilde{T_3})$
\begin{align}\label{diff_ineq_3}
\tilde{x}'(t) &\geq C_0\,\phi\left( x(\tfrac{1}{2}(\alpha(t)+T_3)\right) = C_0\,\phi\left( x(\alpha(t-\tau(t))\right) = C_0\,\phi\left(\tilde{x}(t-\tau(t))\right).
\end{align}
For $t \geq \bar{M}_1(\tilde{T_3})$ it is straightforward to show, using the monotonicity of $\bar{M}_1$, that $\tau(t)>0$. Using that $\bar{M}_1\in \text{RV}_\infty(\theta+1)$ we have
\begin{align*}
\lim_{t\to\infty}\frac{t-\tau(t)}{t} &= \lim_{t\to\infty}\frac{\bar{M}_1\left( \tfrac{1}{2}\bar{M}_1^{-1}(t)\right)}{\bar{M}_1\left( \bar{M}_1^{-1}(t)\right)} = \lim_{t\to\infty}\frac{\bar{M}_1\left( \tfrac{1}{2}\bar{M}_1^{-1}(t)\right)}{\bar{M}_1\left( \bar{M}_1^{-1}(t)\right)} = \left(\frac{1}{2}\right)^{\theta+1}.
\end{align*}
It follows that there exists $T_4>0$ such that for all $t \geq T_4$, $t-\tau(t) > 2^{-(\theta +2)}\,t$ for all $\epsilon>0$ sufficiently small. Letting $T_5 := \max(T_4, \bar{M}_1(\tilde{T_3}))$ we have, for $t \geq T_5$
\begin{align}\label{diff_ineq_4}
\tilde{x}'(t) \geq C_0\, \phi(\tilde{x}(qt)),\quad q = 2^{-(\theta +2)} \in (0,1).
\end{align} 
The following estimates will be needed to define a lower comparison solution. Since $\phi \circ \Phi^{-1}$ is in $\text{RV}_\infty\left(\beta/(1-\beta)\right)$ we have
\[
\lim_{x\to\infty}\frac{(\phi \circ \Phi^{-1})(\tfrac{x}{q})}{(\phi \circ \Phi^{-1})(x)} = \left(\frac{1}{q}\right)^\frac{\beta}{1-\beta}.
\]
Thus there exists $x_2>0$ such that for all $x\geq x_2$
\[
\frac{(\phi \circ \Phi^{-1})(\tfrac{x}{q})}{(\phi \circ \Phi^{-1})(x)} < 2\left(\frac{1}{q}\right)^\frac{\beta}{1-\beta}.
\]
Next let $T^{'}_5>0$ be so large that $\Phi(\tilde{x}(qT^{'}_5))- x_2 > 0$ and set $T_6 := \max(T_5,T^{'}_5)+1$. Then $\Phi(\tilde{x}(qT_6)) > \Phi(\tilde{x}(qT^{'}_5)) > x_2$. Define 
\begin{align}\label{c_constant}
c := \min\left(C_0\,\frac{q^{\tfrac{\beta}{1-\beta}}}{4}, \, \frac{\Phi(\tilde{x}(qT_6)) - x_2}{2T_6(1-q)}, \, \frac{\Phi(\tilde{x}(qT_6))}{2T_6} \right)
\end{align}
and 
\begin{align}\label{theta_constant}
d := cT_6 - \Phi(\tilde{x}(qT_6)).
\end{align}
Then define $x_0:= cqT_6 - d = \Phi(\tilde{x}(qT_6)) - cT_6(1-q)>x_2$. Note that $d< 0$ due to \eqref{c_constant}. Therefore $1/q-1>0$ and for any $x \geq x_0, \, x/q + \left(1/q-1\right)d < x/q$. Hence for $x \geq x_0$
\begin{align}\label{f_after_Finv}
\frac{(\phi \circ \Phi^{-1})\left(\tfrac{x}{q} + \left(\tfrac{1}{q}-1\right)d\right)}{(\phi \circ \Phi^{-1})(x)} \leq \frac{(\phi \circ \Phi^{-1})\left(\tfrac{x}{q}\right)}{(\phi \circ \Phi^{-1})(x)} < 2\left(\frac{1}{q}\right)^\frac{\beta}{1-\beta}.
\end{align}
Letting $t = (x+d)/cq$ in \eqref{f_after_Finv} and noting that \eqref{c_constant} implies $C_0/c \geq 4\left(1/q\right)^{\beta/(1-\beta)}$ we have
\begin{align}\label{C_0/c}
\frac{(\phi \circ \Phi^{-1})(ct - d)}{(\phi \circ \Phi^{-1})(cqt - d)} < 2\left(\frac{1}{q}\right)^\frac{\beta}{1-\beta} < \frac{C_0}{c}, \mbox{ for all }t \geq T_6.
\end{align}
Define the lower comparison solution, $x_-$, by
\begin{align}\label{lower_sol}
x_-(t) = \Phi^{-1}(ct - d), \,\, t \geq qT_6.
\end{align}
Then for $t \in [qT_6,\, T_6]$, by the monotonicity of $\Phi^{-1}$ and \eqref{theta_constant},
\[
x_-(t) \leq x_-(T_6) = \Phi^{-1}(cT_6 - d)= \tilde{x}(qT_6) \leq \tilde{x}(t).
\]
Also $x_-(T_6) = \tilde{x}(qT_6)< \tilde{x}(T_6)$, because $\tilde{x}$ is increasing. Hence 
\begin{align}\label{comparison}
x_-(t) < \tilde{x}(t), \,\, t \in [qT_6, T_6]. 
\end{align}
Next, since $\Phi(x_-(t)) = ct-d$, for $t \geq T_6$
\[
x_-'(t) = c\,\left(\phi \circ \Phi^{-1}\right)(ct-d) = c\,\phi(x_-(t)) = \frac{c}{C_0}\,\frac{\phi(x_-(t))}{\phi(x_-(qt))}\,C_0\,\phi(x_-(qt)).
\]
Now for $t \geq T_6$, by \eqref{C_0/c}
\[
\frac{c}{C_0}\frac{\phi(x_-(t))}{\phi(x_-(qt))} = \frac{c}{C_0}\frac{(\phi\circ\Phi^{-1})(ct-d)}{(\phi\circ\Phi^{-1})(cqt-d)} < \frac{c}{C_0}\frac{C_0}{c} = 1.
\]
Thus
\begin{align}\label{lower_diff}
x_-'(t) < C_0\, \phi(x_-(qt)), \,\, t \geq T_6.
\end{align}
Recalling \eqref{diff_ineq_4}, $\tilde{x}'(t) \geq C_0\,\phi(\tilde{x}(qt))$ for all $t \geq T_6 > T_5$. Then by \eqref{comparison} and \eqref{lower_diff}, because $\phi$ is increasing, $\tilde{x}(t) > x_-(t)$ for all $t \geq qT_6$. To see this suppose there is a minimal $t_0>T_6$ such that $x_-(t_0)=\tilde{x}(t_0)$. Thus $x_-'(t_0) \geq \tilde{x}'(t_0)$ and $x_-(t_0) < \tilde{x}(t)$ for all $t \in [qT_6, t_0)$. Then, since $t_0>T_6$ and $qt_0 > qT_6$, $\phi$ increasing yields
\[
\tilde{x}'(t_0) \geq C_0\,\phi(\tilde{x}(qt_0)) > C_0\, \phi(x_-(qt_0)) > x_-'(t_0) \geq \tilde{x}'(t_0),
\]  
a contradiction. Now, for $t \geq qT_6$, $\tilde{x}(t) > x_-(t) = \Phi^{-1}(ct-d)$. Hence for $t \geq qT_6$
\[
x(\alpha(t)) = \tilde{x}(t) > \Phi^{-1}(ct-d).
\]
From the definition of $\alpha$, in \eqref{def.alpha}, $\alpha^{-1}(t) = \bar{M}_1(t-T_3)$ and therefore
\[
x(t) = \tilde{x}(\alpha^{-1}(t)) > \Phi^{-1}(c\,\alpha^{-1}(t)-d) = \Phi^{-1}(c\,\bar{M}_1(t-T_3)-d), \quad \bar{M}_1(t-T_3)> qT_6.
\]
Hence, recalling that $d<0$,
\begin{align}\label{Phi_x_lower}
\Phi(x(t)) > c \bar{M}_1(t-T_3)-d > c \bar{M}_1(t-T_3) , \quad \bar{M}_1(t-T_3)> qT_6.
\end{align}
Note that for $t>2T_3$, $t/2 < t-T_3.$ Since $\bar{M}_1$ is increasing this implies that $\bar{M}_1(t/2) \leq \bar{M}_1(t-T_3)$. Thus \eqref{Phi_x_lower} implies
\[
\liminf_{t\to\infty}\frac{\Phi(x(t))}{\bar{M}_1(t)} \geq \liminf_{t\to\infty}\frac{c\,\bar{M}_1\left(\frac{t}{2}\right)}{\bar{M}_1(t)} = c\, 2^{-(\theta +1)} > 0.
\]
By Karamata's Theorem $\lim_{t\to\infty}\bar{M}_1(t)/tM_1(t) = 1/(1+\theta)$ and therefore
\[
\liminf_{t\to\infty}\frac{\Phi(x(t))}{t\,M_1(t)}\geq c\,(1+\theta)\, 2^{-(\theta +1)} > 0.
\]
Finally, since $\Phi^{-1} \in \text{RV}_\infty(1/(1-\beta))$ and $M$ is asymptotic to $M_1$, we conclude that
\[
\liminf_{t\to\infty}\frac{x(t)}{\Phi^{-1}(t\,M(t))}>0,
\]
as required.
\end{proof}
\begin{lemma}\label{limsup}
Suppose the hypotheses of Lemma \ref{lemma.lwr.bound} hold. Then the unique continuous solution, $x$, of \eqref{functional} obeys 
\[
\limsup_{t\to\infty}\frac{F(x(t))}{t\,M(t)} \leq \frac{1}{1-\beta}B\left(\theta+1,\frac{\theta \beta+1}{1-\beta}\right).
\]
\end{lemma}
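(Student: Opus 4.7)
The plan is a one-shot self--improvement argument: first extract a crude finite upper bound on $F(x(t))/\bar{M}(t)$ from a pointwise differential inequality, then feed that bound into the integral form \eqref{volterra} through Lemma \ref{Beta_Lemma}, and finally close a fixed--point inequality in $L:=\limsup_{t\to\infty} F(x(t))/\bar{M}(t)$ whose smallest solution is exactly the target constant.

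For the crude step I would exploit that $x$ is non--decreasing (immediate from $x'\geq 0$ in \eqref{functional}) and that $f$ is asymptotically monotone via the approximation $\phi$ of \eqref{f_monotone_approx}, so that $f(x(t-s))\leq (1+o(1))f(x(t))$ uniformly in $s\in[0,t]$ for $t$ large. This gives $x'(t)\leq (1+o(1))M(t)f(x(t))$; dividing by $f(x(t))$ and integrating produces $F(x(t))\leq (1+o(1))\bar{M}(t)+O(1)$, so $L\in[0,\infty)$. Since $F^{-1}\in\text{RV}_\infty(1/(1-\beta))$, it follows that for each $\epsilon>0$,
\[
x(t)\leq (L+\epsilon)^{1/(1-\beta)}(1+o(1))\,F^{-1}(\bar{M}(t)).
\]

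Substituting back into \eqref{volterra}, regular variation of $f$ with the $\phi$-approximation yields $f(x(s))\leq (L+\epsilon)^{\beta/(1-\beta)}(1+o(1))f(F^{-1}(\bar{M}(s)))$ for $s$ large. Observing that $f\circ F^{-1}\circ\bar{M}\in\text{RV}_\infty(\beta(1+\theta)/(1-\beta))$ and $M\in\text{RV}_\infty(\theta)$, Lemma \ref{Beta_Lemma} with $a(s)=f(F^{-1}(\bar{M}(s)))$ and $b(s)=M(s)$ delivers
\[
\int_0^t M(t-s)\,f(F^{-1}(\bar{M}(s)))\,ds \sim tM(t)\,f(F^{-1}(\bar{M}(t)))\,B\bigl(\tfrac{1+\theta\beta}{1-\beta},\theta+1\bigr).
\]
Two invocations of Karamata's Theorem, namely $\bar{M}(t)\sim tM(t)/(1+\theta)$ and $f(F^{-1}(u))\sim F^{-1}(u)/((1-\beta)u)$, reduce the right--hand side to $KF^{-1}(\bar{M}(t))$ with $K:=\tfrac{1+\theta}{1-\beta}B(\tfrac{1+\theta\beta}{1-\beta},\theta+1)$. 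After absorbing $\xi$ into the $o$-term this gives $x(t)\leq (L+\epsilon)^{\beta/(1-\beta)}(1+o(1))\,K\,F^{-1}(\bar{M}(t))$.

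Applying the regularly varying function $F\in\text{RV}_\infty(1-\beta)$ to both sides, taking $\limsup$, and sending $\epsilon\downarrow 0$ yields the fixed--point inequality $L\leq L^{\beta}K^{1-\beta}$, which with $\beta<1$ forces $L\leq K$. A routine Gamma--function manipulation (the one recorded immediately after Theorem \ref{main_thm}) identifies $K$ with $\Lambda(\beta,\theta)$; dividing by $tM(t)\sim(1+\theta)\bar{M}(t)$ then produces the claimed estimate. The principal obstacle I anticipate is the uniform control needed to pull the constant $(L+\epsilon)^{1/(1-\beta)}$ inside $f(\cdot)$ across the whole integration window, because the regular variation estimates only hold for large $s$: one must split the convolution at some fixed time $T$, bound the initial piece crudely using local boundedness of $f\circ x$ (so that it is $o(F^{-1}(\bar{M}(t)))$), and invoke the Uniform Convergence Theorem for regular variation on the tail. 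The case $\beta=0$ in addition leans on the $\phi$-approximation of \eqref{f_monotone_approx} as a substitute for genuine monotonicity of $f$.
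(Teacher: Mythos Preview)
Your proposal is correct and follows essentially the same two-stage strategy as the paper: a crude finiteness bound followed by a one-shot refinement via Lemma~\ref{Beta_Lemma} that closes on the fixed-point inequality $L\leq L^\beta\cdot(\text{const})$. The only cosmetic differences are that the paper works throughout with the normalisation $tM(t)$ rather than $\bar{M}(t)$ and obtains the crude bound directly from the integral form \eqref{volterra} instead of from the differential inequality $x'(t)\leq (1+o(1))M(t)f(x(t))$; the splitting of the convolution at a fixed time $T$ and the use of the $\phi$-approximation that you flag as potential obstacles are handled exactly as you anticipate.
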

\begin{proof}
Once again let $\phi$ satisfying \eqref{f_monotone_approx} obey $f(x)/\phi(x) < (1+\epsilon)$ for all $x > x_1(\epsilon)$, for any $\epsilon>0$ and for some $x_1(\epsilon)>0$. Owing to the fact that $\lim_{t\to\infty}x(t)=\infty$ there exists $T_1(\epsilon)$ such that $t\geq T_1(\epsilon)$ implies $x(t) > x_1(\epsilon)$. Since $\lim_{t\to\infty}M(t) = \infty$ there exists $T_2(\epsilon)$ such that $M(t) > 0$ for all $t\geq T_2$. Hence, for all $t \geq 2\,\max(T_1,T_2)$, \eqref{volterra} becomes
\begin{align}\label{upper_1}
\frac{x(t)}{\phi(x(t))} \leq \frac{x(0)}{\phi(x(t))} + \frac{\int_0^{T_1} M(t-s)f(x(s))\,ds}{\phi(x(t))} +(1+\epsilon)\,t\,M(t),
\end{align}
where the upper bound on the term $\int_{T_1}^t M(t-s) \phi(x(s))\,ds$ was obtained by exploiting the fact that $t \mapsto x(t)$ and $t \mapsto M(t)$ are non-decreasing. By Karamata's Theorem and the regular variation of $\phi$, it is true that $\lim_{x\to\infty}(1-\beta)\phi(x)\Phi(x)/x = 1$. Thus for all $\epsilon>0$ there exists $x_2(\epsilon)$ such that
\[
\Phi(x) < \frac{(1+\epsilon)x}{(1-\beta)\phi(x)}, \,\, \mbox{ for all } x > x_2(\epsilon).
\] 
Once more the divergence of $x(t)$ yields the existence of a $T_3(\epsilon)$ such that $x(t) > x_2(\epsilon)$ for all $t \geq T_3(\epsilon)$. Letting $T_4 = 2\,\max(T_1,T_2,T_3)$ we obtain
\[
\frac{\Phi(x(t))}{t\,M(t)} < \frac{(1+\epsilon)x(t)}{(1-\beta)\phi(x(t))\,t \, M(t)}, \,\, \mbox{ for all } t \geq T_4.
\] 
Combining the above estimate with \eqref{upper_1} yields
\[
\frac{\Phi(x(t))}{t\,M(t)} < \frac{(1+\epsilon) x(0)}{(1-\beta)\phi(x(t))\,t\,M(t)} + \frac{(1+\epsilon)\int_0^{T_1} M(t-s)f(x(s))\,ds}{(1-\beta)\phi(x(t))\,t\,M(t)} + \frac{(1+\epsilon)^2}{(1-\beta)}, \,t \geq T_4(\epsilon).
\]
Hence, letting $t\to\infty$ and then sending $\epsilon \to 0^+$, we get
\begin{align*}
\limsup_{t\to\infty}\frac{\Phi(x(t))}{t\,M(t)} \leq \frac{1}{1-\beta}.
\end{align*}
Since $\Phi^{-1} \in \text{RV}_\infty\left(1/(1-\beta)\right)$ the above estimate can be restated as 
\[
\limsup_{t\to\infty}\frac{x(t)}{\Phi^{-1}(t\,M(t))} \leq  \left(1-\beta\right)^{\tfrac{1}{\beta-1}}  < \infty.
\]
We now seek to refine the ``crude'' upper bound on the growth of the solution obtained above. From the above construction and Lemma \ref{lemma.lwr.bound} we may suppose that
\begin{align}\label{crude_limsup}
\limsup_{t\to\infty} \frac{x(t)}{\Phi^{-1}(t\,M(t))} =: \eta \in (0,\infty).
\end{align} 
From \eqref{crude_limsup} it follows that for all $\epsilon>0$ there exists $T_5(\epsilon)>0$ such that for all $t \geq T_5(\epsilon)$,
$x(t) <(\eta+\epsilon)\Phi^{-1}(t\,M(t))$. By monotonicity of $\phi$ it follows that
\[
\frac{\phi(x(t))}{\phi\left( \Phi^{-1}(t\,M(t)) \right)} < \frac{\phi \left((\eta+\epsilon)\Phi^{-1}(t\,M(t)) \right)}{\phi\left( \Phi^{-1}(t\,M(t)) \right)}, \quad t \geq T_5(\epsilon).
\]
Since $\phi\in \text{RV}_\infty(\beta)$
\[
\limsup_{t\to\infty}\frac{\phi(x(t))}{\phi\left( \Phi^{-1}(t\,M(t)) \right)} \leq (\eta+\epsilon)^\beta.
\]
Thus for all $\epsilon>0$ there exists $T_6(\epsilon)>0$ such that for all $t \geq T_6$,
\[
\phi(x(t)) <(1+\epsilon)(\eta+\epsilon)^\beta \phi\left(\Phi^{-1}(t\,M(t)) \right).
\]
Integrating this estimate yields
\begin{align}\label{int_upper_est}
\int_{T_6}^t M(t-s) \phi(x(s))\,ds \leq (1+\epsilon)(\eta+\epsilon)^\beta\int_{T_6}^t M(t-s)\phi\left(\Phi^{-1}(s\,M(s)) \right)ds, \quad t \geq T_6(\epsilon).
\end{align}
Since $(\phi\ \circ \Phi^{-1})(t\,M(t)) \in \text{RV}_\infty\left(\beta(1+\theta)/(1-\beta) \right)$ and $M \in \text{RV}_\infty(\theta)$, Lemma \ref{Beta_Lemma} can be applied to obtain
\begin{align}\label{apply_lemma}
\lim_{t\to\infty}\frac{\int_{0}^t M(t-s)\phi\left(\Phi^{-1}(s\,M(s)) \right)ds}{t\,M(t)\,\phi\left(\Phi^{-1}(t\,M(t)) \right)} = B\left(\theta+1,\frac{\theta \beta+1}{1-\beta}\right).
\end{align}
Hence combining \eqref{int_upper_est} and \eqref{apply_lemma} yields
\[
\limsup_{t\to\infty}\frac{\int_{T_6}^t M(t-s) \phi(x(s))\,ds}{t\,M(t)\,\phi\left(\Phi^{-1}(t\,M(t)) \right)} \leq (1+\epsilon)(\eta+\epsilon)^\beta B\left(\theta+1,\frac{\theta \beta+1}{1-\beta}\right).
\]
Apply the above estimate to \eqref{volterra} as follows
\begin{align*}
\eta &= \limsup_{t\to\infty}\frac{x(t)}{\Phi^{-1}(t\,M(t))} \leq \limsup_{t\to\infty}\frac{\int_0^{T_6}M(t-s)f(x(s))\,ds}{\Phi^{-1}(t\,M(t))} + \limsup_{t\to\infty}\frac{(1+\epsilon)\int_{T_6}^t M(t-s)\phi(x(s))\,ds}{\Phi^{-1}(t\,M(t))}\\
&\leq (1+\epsilon)^2(\eta+\epsilon)^\beta B\left(\theta+1,\frac{\theta \beta+1}{1-\beta}\right) \limsup_{t\to\infty}\frac{t\,M(t)\,\phi\left(\Phi^{-1}(t\,M(t)) \right)}{\Phi^{-1}(t\,M(t))}\\
&= (1+\epsilon)^2(\eta+\epsilon)^\beta B\left(\theta+1,\frac{\theta \beta+1}{1-\beta}\right) \limsup_{x\to\infty}\frac{x\, \phi\left(\Phi^{-1}(x) \right)}{\Phi^{-1}(x)}.
\end{align*}
Letting $\epsilon\to 0^+$ and using Karamata's Theorem to the remaining limit on the right-hand side
\begin{align*}
\eta^{1-\beta}
= \limsup_{y\to\infty}\frac{\Phi(y)\phi(y)}{y}B\left(\theta+1,\frac{\theta \beta+1}{1-\beta}\right)
= \frac{1}{1-\beta}B\left(\theta+1,\frac{\theta \beta+1}{1-\beta}\right),
\end{align*}
with $y = \Phi^{-1}(x)$ so that $y \to \infty$ as $x\to\infty$. Thus 
\[
\eta = \limsup_{t\to\infty}\frac{x(t)}{\Phi^{-1}(t\,M(t))} \leq \left\{\frac{1}{1-\beta}B\left(\theta+1,\frac{\theta \beta+1}{1-\beta}\right) \right\}^{\tfrac{1}{1-\beta}}.
\]
Using $\Phi \in \text{RV}_\infty(1-\beta)$ and $\Phi(x) \sim F(x)$ as $x\to\infty$ the above upper bound can be reformulated as 
\[
\limsup_{t\to\infty}\frac{F(x(t))}{t\,M(t)} \leq \frac{1}{1-\beta}B\left(\theta+1,\frac{\theta \beta+1}{1-\beta}\right),
\]
which is the required estimate.
\end{proof}
\begin{lemma}\label{refine_liminf}
Suppose the hypotheses of Lemma \ref{lemma.lwr.bound} hold. Then the unique continuous solution, $x$, of \eqref{functional} obeys 
\[
\liminf_{t\to\infty}\frac{F(x(t))}{t\,M(t)} \geq \frac{1}{1-\beta}B\left(\theta+1,\frac{\theta \beta+1}{1-\beta}\right).
\]
\end{lemma}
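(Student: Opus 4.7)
The strategy is to mirror the argument of Lemma~\ref{limsup}, promoting the crude lower bound of Lemma~\ref{lemma.lwr.bound} into a sharp lower bound by iterating the same convolution estimate. Let $\phi$ be the monotone approximation of $f$ from \eqref{f_monotone_approx} and set
\[
\eta_* := \liminf_{t\to\infty}\frac{x(t)}{\Phi^{-1}(tM(t))}.
\]
From Lemma~\ref{lemma.lwr.bound} we know $\eta_*>0$, and from Lemma~\ref{limsup} together with regular variation of $F^{-1}$ we know $\eta_*<\infty$. The goal is to show the ``fixed--point'' type inequality
\[
\eta_*^{1-\beta}\geq \frac{1}{1-\beta}\,B\!\left(\theta+1,\tfrac{1+\theta\beta}{1-\beta}\right),
\]
and then convert this into the required liminf statement by composing with $\Phi$ and invoking regular variation.

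For the bootstrap, fix $\epsilon\in(0,\eta_*)$ and choose $T_1$ so that $x(t)\geq (\eta_*-\epsilon)\Phi^{-1}(tM(t))$ and $f(x(t))\geq(1-\epsilon)\phi(x(t))$ for $t\geq T_1$. Monotonicity of $\phi$ together with its regular variation of index $\beta$ gives, for all $t$ beyond some $T_2\geq T_1$,
\[
\phi(x(t))\geq (1-\epsilon)(\eta_*-\epsilon)^\beta\,\phi\!\left(\Phi^{-1}(tM(t))\right).
\]
Dropping the initial term and the integral over $[0,T_2]$ in \eqref{volterra} and substituting this lower bound yields
\[
x(t)\geq (1-\epsilon)^2(\eta_*-\epsilon)^\beta\int_{T_2}^t M(t-s)\,\phi\!\left(\Phi^{-1}(sM(s))\right)\,ds.
\]
The integrand $s\mapsto\phi(\Phi^{-1}(sM(s)))$ is regularly varying of index $\beta(1+\theta)/(1-\beta)$ and $M\in\text{RV}_\infty(\theta)$, so Lemma~\ref{Beta_Lemma} applies and yields the asymptotic constant $B(\theta+1,(1+\beta\theta)/(1-\beta))$. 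Dividing through by $\Phi^{-1}(tM(t))$, applying Karamata's theorem in the form $tM(t)\phi(\Phi^{-1}(tM(t)))/\Phi^{-1}(tM(t))\to 1/(1-\beta)$, taking the liminf, and finally sending $\epsilon\to 0^+$ produces the displayed fixed--point inequality.

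To conclude, recall $\Phi\sim F$ and $\Phi\in\text{RV}_\infty(1-\beta)$. For any $\epsilon\in(0,\eta_*)$, the bound $x(t)\geq(\eta_*-\epsilon)\Phi^{-1}(tM(t))$ holds for $t$ large, and applying $\Phi$ (increasing) together with the uniform convergence theorem gives $\liminf_{t\to\infty} F(x(t))/(tM(t))\geq(\eta_*-\epsilon)^{1-\beta}$; sending $\epsilon\to 0^+$ and combining with the fixed--point bound delivers the conclusion. I expect the main technical point to be verifying that each step can be executed in the liminf direction, but because Lemma~\ref{Beta_Lemma} delivers an honest two--sided asymptotic and we only ever bound $x(t)$ below by a regularly varying function, the liminf passes cleanly through the convolution, and the argument is essentially a symmetric reflection of the proof of Lemma~\ref{limsup}.
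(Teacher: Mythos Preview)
Your proposal is correct and follows essentially the same approach as the paper's proof: both start from the crude bounds of Lemmas~\ref{lemma.lwr.bound} and~\ref{limsup} to ensure $\eta_*\in(0,\infty)$, feed the pointwise lower bound $x(t)\geq(\eta_*-\epsilon)\Phi^{-1}(tM(t))$ through the monotone regularly varying $\phi$, substitute into the integral form \eqref{volterra}, apply Lemma~\ref{Beta_Lemma} and Karamata's theorem to obtain the self--referential inequality $\eta_*\geq \eta_*^\beta B(\theta+1,\tfrac{1+\theta\beta}{1-\beta})/(1-\beta)$, and then translate back via $F\sim\Phi\in\text{RV}_\infty(1-\beta)$. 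The only cosmetic difference is that you spell out the final conversion to $F(x(t))/(tM(t))$ via the uniform convergence theorem a bit more explicitly than the paper does.
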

\begin{proof}
By Lemma \ref{lemma.lwr.bound} and Lemma \ref{limsup} 
\[
\liminf_{t\to\infty}\frac{x(t)}{\Phi^{-1}(t\,M(t))} =: \eta \in (0,\infty).
\]
Then for all $\epsilon \in (0,\eta)\cap (0,1)$ there exists $T_1(\epsilon)>0$ such that for all $t \geq T_1$
$\eta - \epsilon < x(t) / \Phi^{-1}(t\,M(t))$. Since $\lim_{t\to\infty}M(t)=\infty$ there exists $T_2$ such that $M(t)>0$ for all $t \geq T_2$. Hence
\begin{align}\label{1st_est}
x(t) > (\eta-\epsilon)\Phi^{-1}\left(t\,M(t)\right), \quad t \geq T_3:= \max(T_1,T_2).
\end{align}
Using monotonicity and regular variation of $\phi$ it follows from \eqref{1st_est} that
\[
\liminf_{t\to\infty}\frac{\phi(x(t))}{(\phi\circ\Phi^{-1})(t\,M(t))}\geq (\eta-\epsilon)^\beta.
\]
Now, because $\phi(x) \sim f(x)$ as $x\to\infty$, for all $\epsilon \in (0,\eta)\cap (0,1)$ there exists $T_4(\epsilon)>0$ such that 
\[
f(x(t))> (1-\epsilon)\phi(x(t)) > (1-\epsilon)^2(\eta-\epsilon)^\beta\, (\phi\circ\Phi^{-1})(t\,M(t)), \quad t \geq T_4(\epsilon).
\]
Integration then yields
\[
\int_0^t M(t-s)f(x(s))\,ds > (1-\epsilon)^2(\eta-\epsilon)^\beta\, \int_{T_4}^t M(t-s)(\phi\circ\Phi^{-1})(s\,M(s))\,ds.
\]
Hence, as in the proof of Lemma \ref{limsup}, applying Lemma \ref{Beta_Lemma} gives
\begin{align}\label{int_low_est}
\liminf_{t\to\infty}\frac{\int_0^t M(t-s)f(x(s))\,ds}{t\,M(t)\,(\phi\circ\Phi^{-1})(t\,M(t))} \geq (1-\epsilon)^2(\eta-\epsilon)^\beta \, B\left(\theta+1,\frac{\theta \beta+1}{1-\beta}\right).
\end{align}
Now apply the estimate from \eqref{int_low_est} to \eqref{volterra} as follows
\begin{align*}
\eta &= \liminf_{t\to\infty}\frac{x(t)}{\Phi^{-1}(t\,M(t))} \geq \liminf_{t\to\infty}\frac{\int_0^t M(t-s)f(x(s))\,ds}{\Phi^{-1}(t\,M(t))}\\ &= (1-\epsilon)^2(\eta-\epsilon)^\beta \, B\left(\theta+1,\frac{\theta \beta+1}{1-\beta}\right) \liminf_{t\to\infty}\frac{t\,M(t)\,(\phi\circ\Phi^{-1})(t\,M(t))}{\Phi^{-1}(t\,M(t))} \\
&= (1-\epsilon)^2(\eta-\epsilon)^\beta \, B\left(\theta+1,\frac{\theta \beta+1}{1-\beta}\right) \liminf_{x\to\infty}\frac{x\, \phi\left(\Phi^{-1}(x) \right)}{\Phi^{-1}(x)}.
\end{align*}
The limit of the final term on the right-hand side is $1/(1-\beta)$ by Karamata's Theorem and sending $\epsilon\to 0^+$ yields
\[
\eta = \frac{\eta^\beta}{1-\beta} B\left(\theta+1,\frac{\theta \beta+1}{1-\beta}\right).
\]
Hence
\[
\liminf_{t\to\infty}\frac{x(t)}{F^{-1}(t\,M(t))} \geq \left\{\frac{1}{1-\beta}B\left(\theta+1,\frac{\theta \beta+1}{1-\beta}\right) \right\}^{\frac{1}{1-\beta}}.
\]
Since $F\in \text{RV}_\infty(1-\beta)$ this can be rewritten in the form
\[
\liminf_{t\to\infty}\frac{F(x(t))}{t\,M(t)} \geq \frac{1}{1-\beta}B\left(\theta+1,\frac{\theta \beta+1}{1-\beta}\right),
\]
which is the desired bound.
\end{proof}
As with Theorem \ref{main_thm}, the proof of Theorem \ref{thm.pert} is split into a series of lemmata. A final consolidating argument then establishes the result as stated in Section \ref{results}.
\begin{lemma}\label{lemma.liminf.pert}
Suppose the measure $\mu$ obeys \eqref{infinite_measure} with $M \in \text{RV}_\infty(\theta),\, \theta \geq 0$ and that $f \in \text{RV}_\infty(\beta),\,\beta\in [0,1)$. If $\beta=0$, let $f$ be asymptotically increasing and obey $\lim_{x\to\infty}f(x)=\infty$. Let $x(t)$ denote the unique continuous solution of \eqref{functional_pert} and suppose $H \in C((0,\infty);(0,\infty))$. Then
\begin{align}\label{liminf.pert}
\liminf_{t\to\infty}\frac{x(t)}{F^{-1}(t\,M(t))} \geq L := \left\{ \frac{1}{1-\beta}B\left(1+\theta, \frac{1+\theta \beta}{1-\beta}\right) \right\}^{\tfrac{1}{1-\beta}} > 0.
\end{align}
\end{lemma}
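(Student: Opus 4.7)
My plan is to bound $x$ from below pointwise by the solution $\bar{y}$ of an unperturbed Volterra equation of the form \eqref{functional} whose nonlinearity is a non-decreasing minorant of $f$, and then to invoke Theorem \ref{main_thm} directly. The pointwise positivity of $H$ is what lets the comparison go through, while replacing $f$ by a monotone minorant allows a standard Volterra monotone-iteration argument to close despite the absence of global monotonicity of $f$.

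Concretely, I would set $\hat{f}(u):=\inf_{v\geq u}f(v)$ for $u\geq \xi$ and $\hat{f}(u):=\hat{f}(\xi)$ for $u<\xi$. Continuity, positivity, and divergence to infinity of $f$ force $\hat{f}$ to be continuous, non-decreasing, and positive with $\hat{f}(u)\to\infty$; using the monotone approximation $\phi$ from \eqref{f_monotone_approx}, the identity $\inf_{v\geq u}\phi(v)=\phi(u)$ together with $f\sim\phi$ yields $\hat{f}\sim f$ as $u\to\infty$, so that $\hat{f}\in\text{RV}_\infty(\beta)$. Let $\bar{y}$ denote the unique continuous positive solution of
\begin{align*}
\bar{y}(t)=\xi+\int_0^t M(t-s)\hat{f}(\bar{y}(s))\,ds,\qquad \bar{y}(0)=\xi,
\end{align*}
whose existence and uniqueness is furnished by the same theory that underlies \eqref{volterra}. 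I claim $x(t)\geq \bar{y}(t)$ for every $t\geq 0$. Since $f\geq \hat{f}$ pointwise and $H\geq 0$, the integral equation \eqref{volterra_pert} yields $x\geq Tx$, where $(Tu)(t):=\xi+\int_0^t M(t-s)\hat{f}(u(s))\,ds$. The operator $T$ is monotone because $\hat{f}$ is non-decreasing. Starting from $u_0\equiv \xi$, the iterates $T^nu_0$ form a non-decreasing sequence (because $Tu_0\geq u_0$) bounded above by $x$ (by induction from $u_0\leq x$, using monotonicity of $T$ and $x\geq Tx$), hence converge pointwise to a continuous fixed point of $T$, which by uniqueness is $\bar{y}$; so $\bar{y}\leq x$ on $[0,\infty)$.

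Since $\hat{f}$ satisfies every hypothesis of Theorem \ref{main_thm} (positive, continuous, regularly varying of index $\beta\in[0,1)$, and in the borderline case $\beta=0$ non-decreasing with $\hat{f}(u)\to\infty$), applying Theorem \ref{main_thm} to $\bar{y}$ and invoking \eqref{x_asym_F_inv} gives $\bar{y}(t)\sim L\cdot F_{\hat{f}}^{-1}(tM(t))$ as $t\to\infty$, where $F_{\hat{f}}(x):=\int_1^x du/\hat{f}(u)$ and $L$ is the constant appearing in \eqref{liminf.pert}. Because $\hat{f}\sim f$ implies $F_{\hat{f}}\sim F$, and $F^{-1}\in\text{RV}_\infty(1/(1-\beta))$ then promotes this to $F_{\hat{f}}^{-1}\sim F^{-1}$, the pointwise comparison $x\geq \bar{y}$ delivers the claimed $\liminf_{t\to\infty}x(t)/F^{-1}(tM(t))\geq L$. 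The main obstacle is the comparison $x\geq \bar{y}$: the positivity of $H$ and the monotonicity of the minorant $\hat{f}$ must conspire to make the monotone iteration $T^nu_0$ a valid lower bound for $x$, and it is precisely the passage from $f$ to $\hat{f}$ which rescues the standard Volterra monotone-iteration argument in the absence of global monotonicity of $f$.
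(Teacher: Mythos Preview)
Your argument is correct and shares the paper's high-level strategy---bound $x$ below by the solution of an unperturbed equation built from a monotone minorant of $f$, then invoke Theorem~\ref{main_thm}---but the implementations differ in a way worth noting. The paper works with the $C^1$ approximant $\phi$ from \eqref{f_monotone_approx} and the inequality $f(x)>(1-\epsilon)\phi(x)$, which holds only for $x>x_1(\epsilon)$; this forces a time shift $y(t)=x(t+T)$ into the regime where the estimate is valid, comparison with the solution $x_\epsilon$ of $x_\epsilon'(t)=(1-\epsilon)\int_{[0,t]}\mu(ds)\,\phi(x_\epsilon(t-s))$ with halved initial data, and a final passage $\epsilon\to0^+$ to recover the sharp constant. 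Your choice of the \emph{global} minorant $\hat f(u)=\inf_{v\geq u}f(v)$ eliminates all three of these steps: since $\hat f\leq f$ everywhere on $[\xi,\infty)$ and $\hat f\sim f$, the monotone-iteration comparison holds on all of $[0,\infty)$ and Theorem~\ref{main_thm} delivers $L$ directly, with $F_{\hat f}^{-1}\sim F^{-1}$ following from the regular variation of $F^{-1}$. The paper's route buys the convenience of reusing the already-constructed smooth $\phi$ and a one-line comparison; your route requires verifying continuity and regular variation of $\hat f$ and spelling out the Picard-type iteration, but avoids the $\epsilon$-bookkeeping entirely and is arguably more transparent.
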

\begin{proof}
With $\epsilon \in (0,1)$ arbitrary and $T_0(\epsilon)$ and $T_1(\epsilon)$ defined as in Lemma \ref{lemma.lwr.bound}, \eqref{functional_pert} admits the initial lower estimate
\[
x(t) > x(0) + H(t) + (1-\epsilon)\int_T^t M(t-s) \phi(x(s))\,ds, \quad t \geq T(\epsilon):=T_0(\epsilon)+T_1(\epsilon).
\] 
Letting $y(t) = x(t+T)$ and noting that $H(t)>0$ for $t>0$ we get
\begin{align*}
y(t) &> x(0) + (1-\epsilon)\int_T^{t+T} M(t+T-s) \phi(x(s))\,ds
= x(0) + (1-\epsilon)\int_0^{t} M(t-u) \phi(x(u+T))\,du\\
&= x(0) + (1-\epsilon)\int_0^{t} M(t-u) \phi(y(u))\,du, \quad t \geq T(\epsilon).
\end{align*}
Now consider the comparison equation defined by
\begin{align}\label{eq.perturbed_comparison}
x_\epsilon'(t) = (1-\epsilon)\int_{[0,t]} \mu(ds) \phi(x_\epsilon(t-s)),\quad t >0, \quad x_\epsilon(0) = x(0)/2.
\end{align}
In contrast to \eqref{functional_pert}, the solution to \eqref{eq.perturbed_comparison} will be non-decreasing. Integrating \eqref{eq.perturbed_comparison} using Fubini's Theorem yields
\[
x_\epsilon(t) = x(0)/2 + (1-\epsilon)\int_0^t M(t-u) \phi(x_\epsilon(u))du, \quad t \geq 0.
\]
By construction $x_\epsilon(t) < y(t) = x(t+T)$ for all $t \geq 0$, or $x(t) > x_\epsilon(t-T)$ for all $t \geq T$. Applying Theorem \ref{main_thm} to $x_\epsilon$ then yields
\[
\lim_{t\to\infty}\frac{F(x_\epsilon(t))}{t\, M_\epsilon(t)} = \frac{1}{1-\beta}B\left(1+\theta,\frac{1+\theta\beta}{1-\beta}  \right),
\]  
where $M_\epsilon(t) = (1-\epsilon)M(t)$. Hence
\[
\lim_{t\to\infty}\frac{F(x_\epsilon(t))}{t\, M(t)} = \frac{1-\epsilon}{1-\beta}B\left(1+\theta,\frac{1+\theta\beta}{1-\beta}  \right).
\]
Therefore
\begin{align*}
\liminf_{t\to\infty}\frac{F(x(t))}{t\, M(t)} &\geq \liminf_{t\to\infty}\frac{F(x_\epsilon(t-T))}{t\, M(t)} = \liminf_{t\to\infty}\frac{F(x_\epsilon(t-T))}{(t-T) M(t-T)}\frac{(t-T) M(t-T)}{t\,M(t)}\\
&= \frac{1-\epsilon}{1-\beta}B\left(1+\theta,\frac{1+\theta\beta}{1-\beta}  \right),
\end{align*}
where the final equality follows from the trivial fact that $t-T \sim t$ as $t\to\infty$ and noting that $M$ preserves asymptotic equivalence because $M \in \text{RV}_\infty(\theta)$. Finally, letting $\epsilon\to 0^+$ and using the regular variation of $F^{-1}$ yields
\[
\liminf_{t\to\infty}\frac{x(t)}{F^{-1}(t\, M(t))} \geq \left\{\frac{1}{1-\beta}B\left(1+\theta,\frac{1+\theta\beta}{1-\beta}  \right)\right\}^{\tfrac{1}{1-\beta}} = L,
\]
which finishes the proof.
\end{proof}
\begin{lemma}\label{lemma.limsup.pert}
Suppose the hypotheses of Lemma \ref{lemma.liminf.pert} hold and $\lim_{t\to\infty}H(t)/F^{-1}(t\,M(t)) = \lambda \in [0,\infty)$. Then, with $x$ denoting the unique continuous solution of \eqref{functional_pert}, 
\begin{align}\label{limsup.pert}
\limsup_{t\to\infty}\frac{x(t)}{F^{-1}(t\,M(t))} \leq U := \left(\frac{\lambda}{L^\beta}+\frac{1}{1-\beta} \right)^{\tfrac{1}{1-\beta}},
\end{align}
where $L$ is defined by \eqref{liminf.pert}.
\end{lemma}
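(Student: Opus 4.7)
The plan is to imitate the crude upper bound argument that opens the proof of Lemma \ref{limsup}, but first to replace $x$ (which need no longer be monotone once $H$ is present) by its non-decreasing running maximum $\bar x(t):=\sup_{0\leq s\leq t}x(s)$, and analogously to use $\bar H(t):=\sup_{0\leq s\leq t}H(s)$ to control the perturbation. Since $t\mapsto F^{-1}(tM(t))$ is non-decreasing and $H(t)/F^{-1}(tM(t))\to\lambda$, one immediately obtains $\bar H(t)/F^{-1}(tM(t))\to\lambda$ as well.

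First, I would fix $\epsilon\in(0,1)$, choose a monotone approximant $\phi$ as in \eqref{f_monotone_approx} satisfying $f(x)\leq (1+\epsilon)\phi(x)$ for $x$ large, and split the convolution at a threshold $T$ beyond which this inequality holds. Using $M(s-u)\leq M(s)$ together with $\phi(x(u))\leq \phi(\bar x(u))$ and the monotonicity of $\phi\circ\bar x$, this gives
\begin{align*}
x(s)\leq C+H(s)+(1+\epsilon)\,sM(s)\,\phi(\bar x(s)),\qquad s\geq T,
\end{align*}
for a constant $C$ absorbing the finite initial mass. Because $s\mapsto sM(s)\phi(\bar x(s))$ is non-decreasing, taking suprema over $s\in[0,t]$ on both sides yields
\begin{align*}
\bar x(t)\leq C+\bar H(t)+(1+\epsilon)\,tM(t)\,\phi(\bar x(t)),\qquad t\geq T.
\end{align*}

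Second, I would divide by $\phi(\bar x(t))$ and invoke Karamata's theorem in the form $\Phi(y)\phi(y)/y\to 1/(1-\beta)$ to rewrite the left-hand side as $(1-\beta)\Phi(\bar x(t))(1+o(1))$. The delicate step is the control of $\bar H(t)/\phi(\bar x(t))$. Here Lemma \ref{lemma.liminf.pert} supplies the lower bound $\bar x(t)\geq x(t)\geq (L-\epsilon)F^{-1}(tM(t))$ eventually, and combining the monotonicity and regular variation of $\phi$ with the Karamata asymptotic $(\phi\circ F^{-1})(u)/F^{-1}(u)\sim 1/((1-\beta)u)$ gives, to leading order,
\begin{align*}
\frac{\bar H(t)}{\phi(\bar x(t))}\;\leq\;\frac{\lambda(1-\beta)}{L^\beta}\,tM(t)\,(1+o(1)).
\end{align*}
Substituting this back, dividing by $tM(t)$, and sending first $t\to\infty$ and then $\epsilon\to 0^+$ produces
\begin{align*}
\limsup_{t\to\infty}\frac{\Phi(\bar x(t))}{tM(t)}\;\leq\;\frac{1}{1-\beta}+\frac{\lambda}{L^\beta}.
\end{align*}
The regular variation of $\Phi^{-1}$ with index $1/(1-\beta)$, combined with $\Phi^{-1}\sim F^{-1}$, converts this into $\limsup_{t\to\infty}\bar x(t)/F^{-1}(tM(t))\leq U$, and the inequality $x(t)\leq \bar x(t)$ completes the proof.

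The main obstacle is the joint bookkeeping in the second step: one must track several Karamata approximations and $\epsilon$-errors simultaneously, and — critically — use Lemma \ref{lemma.liminf.pert} to promote the forcing contribution $\bar H(t)/\phi(\bar x(t))$, which is only \emph{a priori} of order $F^{-1}(tM(t))/\phi(\bar x(t))$, to an $O(tM(t))$ quantity that can be added cleanly to the $(1+\epsilon)\,tM(t)$ term coming from the convolution. Without this lower bound input the closing step would not be available, and the non-monotonicity of $x$ and $H$ would otherwise prevent the crude monotone argument from being mimicked.
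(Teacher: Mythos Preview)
Your argument is correct and reaches the same conclusion as the paper, but by a genuinely different and somewhat more direct route. The paper handles the loss of monotonicity of $x$ by constructing an auxiliary \emph{monotone} solution $y_\epsilon$ of a new perturbed Volterra equation, with a carefully manufactured $C^1$ forcing term $\bar H$ majorising $H$; it then shows $x\leq y_\epsilon$ by a time-of-first-breakdown comparison and runs the crude estimate of Lemma~\ref{limsup} on $y_\epsilon$, invoking Lemma~\ref{lemma.liminf.pert} for $y_\epsilon$ to bound the forcing contribution. Your device of passing to the running maxima $\bar x$ and $\bar H$ accomplishes the same reduction to a monotone object without introducing a second integral equation, and you feed in Lemma~\ref{lemma.liminf.pert} for $x$ itself rather than for the comparison solution. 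Both approaches ultimately rely on the same two ingredients---the Karamata relation $\Phi(y)\phi(y)/y\to 1/(1-\beta)$ and the lower bound $L$ to convert $\bar H/\phi(\bar x)$ into an $O(tM(t))$ term---so the closing arithmetic is essentially identical. Your route is shorter; the paper's has the structural appeal of reducing to an equation of the same type already analysed.

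One small point of bookkeeping: your ``constant $C$ absorbing the finite initial mass'' cannot literally be a constant, since the tail of the convolution over $[0,T]$ contributes a term of size $M(s)\int_0^T f(x(u))\,du$, which grows with $s$. This does no damage---after dividing by $tM(t)$ it is $O(1/t)$ and vanishes in the limit, exactly as the analogous term does in the paper's proof of Lemma~\ref{limsup}---but you should carry it as $C(t)=x(0)+M(t)\,T\,F^\ast$ rather than suppress it.
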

\begin{proof}
We begin by constructing a monotone comparison solution which will majorise the solution of \eqref{functional_pert} and to which Lemma \ref{lemma.liminf.pert} can be applied. Let $\epsilon \in (0,1)$ be arbitrary and define $T_1(\epsilon)$ and $T_2(\epsilon)$ as in the proof of Lemma \ref{limsup}.

By hypothesis $\lim_{t\to\infty}H(t)/F^{-1}(t\,M(t)) = \lambda \in [0,\infty)$ and so there exists a $T(\epsilon)>0$ such that $t \geq T(\epsilon)$ implies $H(t) < (\lambda+\epsilon)\Phi^{-1}(t\,M(t))$. $M \in \text{RV}_\infty(\theta)$ implies there exists $M_1 \in C^1$ asymptotic to $M$ and $T_0(\epsilon)>T$ such that $M(t)<(1+\epsilon)M_1(t)$ for all $t \geq T_0$. For $t \geq T_0$, because $\Phi^{-1}$ is increasing, $\Phi^{-1}(t\,M(t)) < \Phi^{-1}(t\,(1+\epsilon)\,M_1(t))$ and since $\Phi^{-1} \in \text{RV}_\infty(1/(1-\beta))$ there exists $T^*>T_0$ such that $\Phi^{-1}(t\,M(t)) < (1+\epsilon)^{(2-\beta)/(1-\beta)}\Phi^{-1}(t\,M_1(t))$ for all $t \geq T^*$. 

For notational convenience define the quantity $\epsilon^*$ by letting $(1+\epsilon^*):=(1+\epsilon)^{(2-\beta)/(1-\beta)}$; note that $(1+\epsilon^*)\to 1$ as $\epsilon\to 0^+$. Defining $T_2' := T^* + T_1 + T_2$, we have the estimate
\begin{align}\label{initial_upper_pert}
x(t) &< x(0) + H(t) + \int_0^{T_2'} M(t-s)f(x(s))\,ds +(1+\epsilon)\int_{T_2'}^t M(t-s)\phi(x(s))\,ds\nonumber\\
&\leq x(0) + H(t) + M(t)\,T_2'\,F^*+(1+\epsilon)\int_{T_2'}^t M(t-s)\phi(x(s))\,ds\nonumber\\
&< x(0) + (\lambda+\epsilon)(1+\epsilon^*)\Phi^{-1}(t\,M_1(t)) + (1+\epsilon)M_1(t)\,T_2'\,F^*+(1+\epsilon)\int_{T_2'}^t M(t-s)\phi(x(s))\,ds,
\end{align}
for all $t \geq T_2'$ and where $F^* := \max_{0\leq s \leq T_2'}f(x(s))$. Now define the constant $x^*:=\max_{0 \leq s \leq T_2'}x(s)$ and the function 
\[
\bar{H}(t) := (\lambda+\epsilon)(1+\epsilon^*)\Phi^{-1}(t\,M_1(t)) + (1+\epsilon)M_1(t)\,T_2'\,F^* - (\lambda+\epsilon)(1+\epsilon^*),\quad t \geq 0 .
\]
Since $\Phi^{-1}(0)=1$ and $M_1(0)=0$, $H(0)=0$ and by construction $H \in C^1((0,\infty);(0,\infty))$. The initial upper estimate \eqref{initial_upper_pert} motivates the definition of the following upper comparison equation:
\[
y_\epsilon '(t) :=  \bar{H}'(t)+(1+\epsilon)\int_{[0,t]} \mu(ds)\phi(y_\epsilon(t-s))\,ds,\quad t \geq 0,\quad y_\epsilon(0) = x(0) + x^* + (\lambda+\epsilon)(1+\epsilon^*).
\]
Integration using Fubini's theorem quickly shows that 
\[
y_\epsilon(t) = x(0)+x^* + (\lambda+\epsilon)(1+\epsilon^*) + \bar{H}(t) + (1+\epsilon)\int_0^t M(t-s) \phi(y_\epsilon(s))\,ds, \quad t \geq 0.
\]
Since $y_\epsilon(t)$ is non-decreasing it is immediately clear that $x(t) \leq y_\epsilon(t)$ for all $t \in [0,T_2']$. A simple time of the first breakdown argument using the estimate \eqref{initial_upper_pert} then yields that $x(t) \leq y_\epsilon(t)$ for all $t \geq 0$. We now compute an explicit upper bound on $\limsup_{t\to\infty}y_\epsilon(t)/F^{-1}(t\,M(t))$. Monotonicity readily yields
\[
y_\epsilon(t) \leq x(0) + x^* + (\lambda+\epsilon)(1+\epsilon^*)\Phi^{-1}(t\,M_1(t)) + (1+\epsilon)M_1(t)\,T_2'\,F^*+(1+\epsilon)M(t)\,t\,\phi(y_\epsilon(t)), \quad t \geq 0.
\]
Hence, with $C(t)$ suitably defined,
\[
\frac{y_\epsilon(t)}{t\,M(t)\,\phi(y_\epsilon(t))} \leq C(t) + \frac{(\lambda+\epsilon)(1+\epsilon^*)\Phi^{-1}(t\,M_1(t))}{t\,M(t)\,\phi(y_\epsilon(t))} +(1+\epsilon),\quad t \geq 0.
\]
A short calculation reveals that $\lim_{t\to\infty}C(t) = 0$. By Karamata's Theorem there exists a $T_3(\epsilon)$ such that
\begin{align}\label{upper.est.pert}
\frac{\Phi(y_\epsilon(t))}{t\,M(t)} < \frac{(1+\epsilon)C(t)}{1-\beta} + \frac{(1+\epsilon)(\lambda+\epsilon)(1+\epsilon^*)\Phi^{-1}(t\,M_1(t))}{(1-\beta)\,t\,M(t)\,\phi(y_\epsilon(t))} +\frac{(1+\epsilon)^2}{1-\beta}, \quad t \geq T_4:= T_3+T_2'.
\end{align}
By applying Lemma \ref{lemma.liminf.pert} to $y_\epsilon$ we conclude that 
\[
\liminf_{t\to\infty}\frac{y_\epsilon(t)}{\Phi^{-1}(t\,M(t))} =: L \in (0,\infty].
\]
If $L \in (0,\infty)$ then there exists a $T_5(\epsilon)$ such that for all $t \geq T_6:= T_5+T_4$
\begin{align}\label{final.est}
\frac{\Phi(y_\epsilon(t))}{t\,M(t)} &< \frac{(1+\epsilon)C(t)}{1-\beta} + \frac{(1+\epsilon)(\lambda+\epsilon)(1+\epsilon^*)\Phi^{-1}(t\,M(t))}{(1-\beta)\,t\,M(t)\,\phi((1-\epsilon)\,L\,\Phi^{-1}(t\,M(t)))} +\frac{(1+\epsilon)^2}{1-\beta}\nonumber\\
&< \frac{(1+\epsilon)C(t)}{1-\beta} + \frac{(1+\epsilon)(\lambda+\epsilon)(1+\epsilon^*)\Phi^{-1}(t\,M(t))}{(1-\beta)\,t\,M(t)\,(1-\epsilon)^\beta \, L^\beta\,\phi(\Phi^{-1}(t\,M(t)))} +\frac{(1+\epsilon)^2}{1-\beta}.
\end{align}
By Karamata's Theorem the following asymptotic equivalence holds
\[
(1-\beta)\,t\,M(t)\,\phi\left(\Phi^{-1}(t\,M(t))\right) \sim \Phi^{-1}(t\,M(t)) \mbox{ as }t\to\infty.
\]
Therefore taking the limit superior across \eqref{final.est} yields
\[
\limsup_{t\to\infty}\frac{\Phi(y_\epsilon(t))}{t\,M(t)} \leq \frac{(1+\epsilon)(\lambda+\epsilon)(1+\epsilon^*)}{(1-\epsilon)^\beta \, L^\beta}+\frac{(1+\epsilon)^2}{1-\beta}.
\]
By letting $\epsilon\to 0^+$ and using the regular variation of $\Phi^{-1}$
\[
\
	\limsup_{t\to\infty}\frac{x(t)}{\Phi^{1}(t\,M(t))} \leq \left(\frac{\lambda}{L^\beta}+\frac{1}{1-\beta} \right)^{\tfrac{1}{1-\beta}}=: U.
\]
If $L := \liminf_{t\to\infty}y_\epsilon(t)/\Phi^{-1}(t\,M(t))=\infty$ the above construction will yield 
\[
\limsup_{t\to\infty}y_\epsilon(t)/\Phi^{-1}(t\,M(t))<\infty,
\] 
a contradiction. Hence $L \in (0,\infty)$ and the claim is proven.
\end{proof}
\begin{lemma}\label{iteration}
Suppose $\beta \in [0,1)$, $\lambda \in [0,\infty)$ and consider the iterative scheme defined by 
\begin{align}\label{scheme}
x_{n+1} = g(x_n) := \frac{x_n^\beta}{1-\beta} B\left(1+\theta, \frac{1+\theta \beta}{1-\beta}\right) + \lambda, \,\, n \geq 1;\,\, x_0 \in [L, C^*],
\end{align}
with $L$ defined by \eqref{liminf.pert}, $U$ defined by \eqref{limsup.pert} and
\begin{align}\label{C_star}
C^* := \max\left(U,\, L + \frac{\lambda}{1-\beta} \right).
\end{align}
Then there exists a unique $x_\infty \in [L,C^*]$ such that $\lim_{n\to\infty}x_n = x_\infty$.
\end{lemma}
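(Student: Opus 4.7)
The plan is to view the recursion \eqref{scheme} as a fixed-point iteration for the characteristic equation \eqref{characteristic} and exploit the concavity of $g$ to obtain monotone convergence to its unique fixed point. Abbreviating $K := \tfrac{1}{1-\beta}B(1+\theta,\tfrac{1+\theta\beta}{1-\beta})$, the scheme reads $g(x) = Kx^\beta + \lambda$, and $L = K^{1/(1-\beta)}$, so that $g(L) = KL^\beta + \lambda = L + \lambda$. The case $\beta = 0$ is immediate since $g$ is then constant and $x_n$ is eventually constant; henceforth I would focus on $\beta \in (0,1)$, where $g$ is strictly increasing and strictly concave on $(0,\infty)$.

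First I would locate the unique fixed point $\zeta$ of $g$ and show $\zeta \in [L,C^*]$. Rewriting $g(\zeta) = \zeta$ as $h(\zeta) := \zeta - L^{1-\beta}\zeta^\beta = \lambda$ and observing that $h(L) = 0$ together with $h'(x) = 1 - \beta L^{1-\beta}x^{\beta-1} \geq 1-\beta$ for $x \geq L$, the mean value theorem yields the sandwich $L \leq \zeta \leq L + \lambda/(1-\beta) \leq C^*$, by the definition \eqref{C_star}. Uniqueness of $\zeta$ follows because $g(x)-x$ is strictly concave on $(0,\infty)$, takes the nonnegative value $\lambda$ at $0$, and tends to $-\infty$ at infinity, hence changes sign at exactly one positive point. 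The same concavity analysis also yields $g(x) \geq x$ on $[L,\zeta]$ and $g(x) \leq x$ on $[\zeta, C^*]$.

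Next, the invariance $g([L,C^*]) \subseteq [L,C^*]$ is quick: the lower bound $g(x) \geq L$ follows from monotonicity of $g$ and $g(L) = L + \lambda$, while $g(C^*) \leq C^*$ is a consequence of the sign information just established together with $C^* \geq \zeta$. Combining the two sign inequalities with the monotonicity of $g$ then delivers the monotone structure of $(x_n)$: if $x_0 \in [L,\zeta]$ the iterates are nondecreasing and bounded above by $\zeta$, while if $x_0 \in [\zeta, C^*]$ they are nonincreasing and bounded below by $\zeta$. In either situation the monotone convergence theorem supplies a limit $x_\infty \in [L,C^*]$, and continuity of $g$ forces $g(x_\infty) = x_\infty$, whence $x_\infty = \zeta$ by the uniqueness of the fixed point; the limit is therefore the same value for every admissible starting point. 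No step is particularly deep; the only point demanding care is the inclusion $\zeta \leq C^*$, which is exactly the role of the bound $L + \lambda/(1-\beta)$ baked into \eqref{C_star}.
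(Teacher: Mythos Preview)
Your proof is correct, but the approach differs from the paper's. The paper argues via the contraction mapping principle: after computing $g'(x)=\frac{\beta}{1-\beta}x^{\beta-1}B(1+\theta,\tfrac{1+\theta\beta}{1-\beta})$ and $g''<0$, it notes that $g'(L)=\beta$ and $g'$ is decreasing, so $|g'(x)|\le\beta<1$ on $[L,\infty)$; invariance of $[L,C^*]$ is then checked by combining $g(L)=L+\lambda$ with the mean value estimate $g(C^*)\le \beta(C^*-L)+g(L)$, which is $\le C^*$ precisely when $C^*\ge L+\lambda/(1-\beta)$. The Banach fixed-point theorem then delivers existence, uniqueness and convergence simultaneously, with the bonus of a geometric rate $\beta$.

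You instead first locate the fixed point $\zeta$ directly via the auxiliary function $h(x)=x-L^{1-\beta}x^\beta$ and the lower bound $h'\ge 1-\beta$, obtaining the same key inclusion $\zeta\le L+\lambda/(1-\beta)\le C^*$; you then exploit the concavity sign pattern $g(x)\ge x$ on $[L,\zeta]$ and $g(x)\le x$ on $[\zeta,C^*]$ to get monotone iterates converging to $\zeta$. Your route is slightly longer but more elementary (no contraction principle) and yields the extra structural information that the iterates are monotone; the paper's route is quicker and gives an explicit convergence rate. Either argument hinges on the same quantitative fact, namely that the constant $L+\lambda/(1-\beta)$ built into $C^*$ is exactly what is needed to trap the fixed point.
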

\begin{proof}
By inspection, $g \in C([L,\infty);(0,\infty))$. We calculate as follows
\[
g'(x) = \frac{\beta}{1-\beta}x^{\beta-1}B\left(1+\theta, \frac{1+\theta \beta}{1-\beta}\right) > 0, \quad x >0,
\]
and similarly
\[
g''(x) = -\beta x^{\beta-2}B\left(1+\theta, \frac{1+\theta \beta}{1-\beta}\right) < 0, \quad x >0.
\]
Therefore $g'(L) = \beta > g'(x) > 0$ for all $x>L$ and $|g'(x)| \leq \beta <1$ for all $x \in [L,\infty)$. Since $g$ is monotone increasing it is sufficient check that $g$ maps $[L,\,C^*]$ to $[L,\,C^*]$ as follows. Firstly,
\begin{align}\label{g_of_L}
g(L) = \frac{L^\beta}{1-\beta}B\left(1+\theta, \frac{1+\theta\beta}{1-\beta}\right)+\lambda = L + \lambda \in [L,\,C^*].
\end{align}
By the Mean Value Theorem there exists $\xi \in [L, C^*]$ such that 
\[
\frac{g(C^*) - g(L)}{C^* - L} = g'(\xi) \leq \beta.
\]
Therefore $g(C^*) \leq \beta (C^* - L) + g(L)$ and thus a sufficient condition for $g(C^*) \leq C^*$ is $\beta (C^* - L) + g(L) \leq C^*$ or $C^* \geq (g(L) - L\beta)/(1-\beta) = L + \lambda/(1-\beta)$, using \eqref{g_of_L}. Thus with $C^*$ as defined in \eqref{C_star}, $g: [L,C^*] \to [L,C^*]$. Hence 
\eqref{scheme} has a unique fixed point in $[L,\, C^*]$ and the claim follows.
\end{proof}
With the preceding auxiliary results proven we are now in a position to supply the proof of Theorem \ref{thm.pert}, as promised.
\begin{proof}[Proof of Theorem \ref{thm.pert}]
Suppose that $(ii.)$ holds, or that $\lim_{t\to\infty}H(t)/F^{-1}(t\,M(t)) = \lambda \in [0,\infty)$. The idea here is to combine the crude bounds on the solution from Lemmas \ref{lemma.liminf.pert} and \ref{lemma.limsup.pert} with a fixed point argument based on Lemma \ref{iteration} to complete the proof that $(ii.)$ implies $(i.)$. We compute 
$\limsup_{t\to\infty}x(t)/F^{-1}(t\,M(t))$ in detail only as the calculation of the corresponding limit inferior proceeds in an analogous manner.
To begin make the following induction hypothesis 
\[
\left(H_n\right)\quad\quad\limsup_{t\to\infty}\frac{x(t)}{\Phi^{-1}(Mt)} \leq \zeta_n, \quad \zeta_{n+1} := \frac{\zeta_n^\beta}{1-\beta}B\left(1+\theta,\frac{1+\theta\beta}{1-\beta}\right) + \lambda,\quad n \geq 0,
\]
and choose $\zeta_0 := U$. $\left(H_0\right)$ is true by Lemma \ref{lemma.limsup.pert}. Suppose that $\left(H_n\right)$ holds. Thus there exists $T(\epsilon)>0$ such that $x(t)< (\zeta_n+\epsilon)\Phi^{-1}(t\,M(t))$ for all $t \geq T$. Hence 
\[
\frac{\phi(x(t))}{\phi(\Phi^{-1}(t\,M(t)))} < \frac{\phi((\zeta_n+\epsilon)\Phi^{-1}(Mt))}{\phi(\Phi^{-1}(t\,M(t)))}, \,\, t \geq T.
\]
The regular variation of $\phi$ thus yields $\limsup_{t\to\infty}\phi(x(t))/\phi(\Phi^{-1}(t\,M(t))) \leq (\zeta_n+\epsilon)^\beta$. Therefore there exists a $T_2(\epsilon)>0$ such that $t \geq T_2$ implies $f(x(t))  < (1+\epsilon)[(\zeta_n+\epsilon)^\beta + \epsilon]\phi(\Phi^{-1}(t\,M(t)))$. From \eqref{volterra_pert}
\[
\limsup_{t\to\infty}\frac{x(t)}{\Phi^{-1}(t\,M(t))} = \limsup_{t\to\infty}\frac{\int_0^t M(t-s)f(x(s))ds}{\Phi^{-1}(t\,M(t))} + \lim_{t\to\infty}\frac{H(t)}{\Phi^{-1}(t\,M(t))}.
\]
Using the upper bound derived from our induction hypothesis this becomes
\[
\limsup_{t\to\infty}\frac{x(t)}{F^{-1}(t\,M(t))} \leq (1+\epsilon)[(\zeta_n+\epsilon)^\beta + \epsilon]\limsup_{t\to\infty}\frac{\int_{T_2}^t M(t-s) \phi(\Phi^{-1}(s\,M(s)))}{\Phi^{-1}(t\,M(t))} + \lambda.
\]
Applying Karamata's Theorem and Lemma \ref{Beta_Lemma}
\begin{align*}
\limsup_{t\to\infty}\frac{x(t)}{F^{-1}(t\,M(t))} &\leq (1+\epsilon)[(\zeta_n+\epsilon)^\beta + \epsilon]\limsup_{t\to\infty}\frac{\int_{T_2}^t M(t-s) \phi(\Phi^{-1}(s\,M(s)))}{(1-\beta)t\,M(t)\,\phi(\Phi^{-1}(t\,M(t)))} + \lambda\\
&= \frac{(1+\epsilon)[(\zeta_n+\epsilon)^\beta + \epsilon]}{1-\beta}B\left(1+\theta,\frac{1+\theta\beta}{1-\beta}\right)+\lambda.
\end{align*}
Letting $\epsilon\to 0^+$ yields
\[
\limsup_{t\to\infty}\frac{x(t)}{F^{-1}(t\,M(t))} \leq \frac{\zeta^\beta}{1-\beta} B\left(1+\theta,\frac{1+\theta\beta}{1-\beta}\right)+\lambda = \zeta_{n+1},
\]
proving the induction hypothesis $(H_{n+1})$. Hence $(H_n)$ holds for all $n$, or
Hence
\[
\limsup_{t\to\infty}\frac{x(t)}{F^{-1}(t\,M(t))} \leq \zeta_n,  \mbox{ for all } n \geq 0. 
\]
By Lemma \ref{iteration}, $\lim_{n\to\infty}\zeta_n = \zeta$, where $\zeta$ is the unique solution in $[L,U]$ of the ``characteristic'' equation \eqref{characteristic}. Thus
\[
\limsup_{t\to\infty}\frac{x(t)}{F^{-1}(Mt)} \leq \zeta. 
\] 
In the case of the corresponding limit inferior the only modification is to the induction hypothesis, take $\zeta_0:= L$, and the argument then proceeds as above to yield
$
\liminf_{t\to\infty}x(t)/F^{-1}(Mt) \geq \zeta, 
$
completing the proof. 

Now suppose that $(i.)$ holds, or that $\lim_{t\to\infty}x(t)/F^{-1}(t\,M(t)) = \zeta \in [L,\infty)$. It follows that there exists $T_3(\epsilon)>0$ such that for all $t \geq T_3$,
\[
\phi((\zeta-\epsilon)\Phi^{-1}(t\,M(t))) < \phi(x(t)) < \phi((\zeta+\epsilon)\Phi^{-1}(t\,M(t))).
\]
Hence for $t\geq T_3$
\[
\int_{T_3}^t M(t-s) \phi((\zeta-\epsilon)\Phi^{-1}(s\,M(s)))ds \leq \int_{T_3}^t M(t-s) \phi(x(s))ds \leq \int_{T_3}^t M(t-s)\phi((\zeta+\epsilon)\Phi^{-1}(s\,M(s)))ds.
\]
Using the regular variation of $\phi$ the above estimate can be reformulated as
\begin{align*}
\frac{(\zeta-\epsilon)^\beta\,\int_{T_3}^t M(t-s) \phi(\Phi^{-1}(s\,M(s)))\,ds}{\Phi^{-1}(t\,M(t))} &\leq \frac{\int_{T_3}^t M(t-s) \phi(x(s))\,ds}{\Phi^{-1}(t\,M(t))}\\ &\leq (\zeta+\epsilon)^\beta\frac{\int_{T_3}^t M(t-s)\phi((\zeta+\epsilon)\Phi^{-1}(s\,M(s)))\,ds}{\Phi^{-1}(t\,M(t))}, \quad t \geq T_3.
\end{align*}
Using Lemma \ref{Beta_Lemma} and letting $\epsilon\to 0^+$ thus yields
\[
\lim_{t\to\infty}\frac{\int_{0}^t M(t-s) \phi(x(s))ds}{\Phi^{-1}(t\,M(t))} = \frac{\zeta^\beta}{1-\beta}B\left(1+\theta,\frac{1+\theta\beta}{1-\beta}\right).
\]
Therefore assuming $(i.)$ and taking the limit across \eqref{volterra_pert} we obtain
\[
\zeta = \frac{\zeta^\beta}{1-\beta}B\left(1+\theta,\frac{1+\theta\beta}{1-\beta}\right) + \lim_{t\to\infty}\frac{H(t)}{\Phi^{-1}(t\,M(t))},
\]
as claimed.
\end{proof}
We now give the proof of Theorem~\ref{thm.bigpert} in which the perturbation is large. The reader will note that this proof makes much less use of properties of regular varying functions: in fact, we establish the asymptotic result by 
observing that a key functional of the solution is well approximated by a linear non--autonomous differential inequality.
\begin{proof}[Proof of Theorem \ref{thm.bigpert}]
As always $\epsilon\in(0,1)$ is arbitrary. From \eqref{f_monotone_approx} there exists a $\phi$ such that 
\[
\lim_{x \to\infty}f(x)/\phi(x)=1, \quad \lim_{x\to\infty}x\,\phi'(x)/\phi(x) = \beta
\] 
(see e.g.,~\cite[Theorem 1.3.3]{BGT}). Therefore there exists $x_1(\epsilon)>0$ such that $f(x)<(1+\epsilon)\phi(x)$ for all $x \geq x_1(\epsilon)$ and $x_0(\epsilon)$ such that $\phi'(x)<(\beta+\epsilon)\phi(x)/x$ for all $x \geq x_0(\epsilon)$. Similarly, since $\lim_{t\to\infty}x(t)=\infty$, there exists $T_1(\epsilon)>0$ such that $x(t) > \max(x_0(\epsilon),x_1(\epsilon))$ for all $t \geq T_1(\epsilon)$. The regular variation of $M$ means that there exists a non-decreasing function $M_1 \in C^1$ and $T_2(\epsilon)>0$ such that $(1-\epsilon)M_1(t)<M(t)<(1+\epsilon)M_1(t)$ for all $t \geq T_2(\epsilon)$. Hence
\[
(1-\epsilon)M_1(t)< \max_{T_2\leq s \leq t}M(s)<(1+\epsilon)M_1(t),\quad t \geq T_2.
\]
Thus for $t \geq T_2$
\[
(1-\epsilon)M_1(t)< \max_{0\leq s \leq t}M(s)< \max\left(\max_{0\leq s \leq T_2}M(s), \max_{T_2 \leq s \leq t}M(s)\right) \leq \max\left(\max_{0\leq s \leq T_2}M(s), (1+\epsilon)M_1(t)\right).
\]
Therefore
\[
1-\epsilon \leq \frac{\max_{0\leq s \leq t}M(s)}{M_1(t)} \leq \max\left(\frac{\max_{0\leq s \leq T_2}M(s)}{M_1(t)}, 1+\epsilon\right),
\]
and because $\lim_{t\to\infty}M_1(t)=\infty$ we conclude that $\lim_{t\to\infty}\max_{0\leq s \leq t}M(s)/M_1(t)=1$. It follows that there exists a $T_3(\epsilon)>0$ such that $\max_{0\leq s \leq t}M(s) < (1+\epsilon)M_1(t)$ for all $t \geq T_3(\epsilon)$. Now let $T = 1 + \max(T_1,T_2,T_3)$. From \eqref{volterra_pert}, with $t \geq 2T$,
\begin{align*}
x(t) &= x(0) + H(t) + \int_0^T M(t-s)f(x(s))ds + \int_T^{t} M(t-s)f(x(s))\,ds\\
&< x(0) + H(t) + \int_0^T M(t-s)f(x(s))ds + (1+\epsilon)\int_T^{t} M(t-s)\phi(x(s))\,ds\\
&= x(0) + H(t) + \int_0^T M(t-s)f(x(s))ds + (1+\epsilon)\int_T^{t-T} M(t-s)\phi(x(s))\,ds\\
&+ (1+\epsilon)\int_{t-T}^t M(t-s)\phi(x(s))\,ds.
\end{align*}
If $s \in [T,t-T]$, then $t-s \geq T>T_1$, and for $t \geq 2T$
\begin{align*}
x(t)&< x(0) + H(t) + \int_0^T M(t-s)f(x(s))\,ds + (1+\epsilon)^2 M_1(t) \int_T^{t-T} \phi(x(s))\,ds\\
&+ (1+\epsilon)\max_{0\leq s \leq T}M(s)\int_{t-T}^t\phi(x(s))\,ds
\end{align*}
Now, as $T > T_3(\epsilon)$, $\max_{0 \leq s \leq T}M(s) < (1+\epsilon)M_1(T) < (1+\epsilon)M_1(t)$. Hence
\begin{align*}
x(t)&< x(0) + H(t) + \int_0^T M(t-s)f(x(s))\,ds + (1+\epsilon)^2 M_1(t) \int_T^{t} \phi(x(s))\,ds, \quad t \geq 2T.
\end{align*}
For $t \geq 2T >T$, $\max_{0 \leq s \leq T}M(t-s) = \max_{t-T \leq u \leq t}M(u) \leq \max_{0 \leq u \leq t}M(u)<(1+\epsilon)M_1(t)$. Thus, for $t \geq 2T$,
\begin{align}\label{upper.est.pert1}
x(t)&< x(0) + H(t) + (1+\epsilon)M_1(t)\int_0^T f(x(s))\,ds + (1+\epsilon)^2 M_1(t) \int_T^{t} \phi(x(s))\,ds.
\end{align}
For $t \in [T,2T]$, $x(t) \leq \max_{s \in [0,2T]}x(s) := x^*_1(\epsilon)$. Combining this with \eqref{upper.est.pert1}
\begin{align}\label{upper.est.pert2}
x(t)&< x^*_1(\epsilon) + H(t) + (1+\epsilon)M_1(t)x^*_2(\epsilon) + (1+\epsilon)^2 M_1(t) \int_T^{t} \phi(x(s))\,ds, \quad t \geq 2T,
\end{align}
where $x^*_2(\epsilon):=\int_0^T f(x(s))ds$. Define for $t \geq 2T$
\begin{align}\label{H_epsilon}
H_\epsilon(t) := x^*_1(\epsilon) + H(t) + (1+\epsilon)M_1(t)x^*_2(\epsilon).
\end{align}
Note that by construction $\lim_{t\to\infty}H_\epsilon(t)/H(t) =1$. Consolidating \eqref{upper.est.pert2} and \eqref{H_epsilon} we have
\begin{align}\label{upper.est.pert3}
x(t)&< H_\epsilon(t) + (1+\epsilon)^2 M_1(t) \int_T^{t} \phi(x(s))\,ds, \quad t \geq 2T.
\end{align}
By defining 
\begin{align*}
I_\epsilon(t) := \int_T^{t} \phi(x(s))\,ds, \quad t \geq 2T.
\end{align*}
we can formulate an advantageous auxiliary differential inequality as follows. Since $x$ is continuous and $\phi \in C^1(0,\infty)$, $I'_\epsilon(t) = \phi(x(t)), \, t \geq 2T$. Moreover, $\lim_{t\to\infty}I_\epsilon(t)=\infty$. By \eqref{upper.est.pert3}
\begin{align}\label{I_upper_est}
I'_\epsilon(t) = \phi(x(t)) < \phi\left( H_\epsilon(t) + (1+\epsilon)^2 M_1(t) I_\epsilon(t) \right), \quad t \geq 2T.
\end{align}
By the Mean Value Theorem, for each $t \geq 2T$, there exists $\xi_\epsilon(t)\in [0,1]$ such that 
\[
\phi\left( H_\epsilon(t) + (1+\epsilon)^2 M_1(t) I_\epsilon(t) \right) = \phi(H_\epsilon)+ \phi'\left( H_\epsilon(t) + \xi_\epsilon(t)(1+\epsilon)^2 M_1(t) I_\epsilon(t)\right)(1+\epsilon)^2 M_1(t) I_\epsilon(t).
\]
Let
$
a_\epsilon(t) := H_\epsilon(t) + \xi_\epsilon(t)(1+\epsilon)^2 M_1(t) I_\epsilon(t), \,\, t \geq 2T.
$
 For $t \geq 2T$, 
\[
a_\epsilon(t) \geq H_\epsilon(t)> x^*_1(\epsilon) := \max_{s \in [0,2T]}x(s) > x_0(\epsilon).
\]
Therefore, with $\psi \in \text{RV}_\infty(\beta-1)$ a decreasing function asymptotic to $\phi(x)/x$,
\[
\phi'(a_\epsilon(t))< (\beta+\epsilon)\frac{\phi(a_\epsilon(t))}{a_\epsilon(t}< (\beta+\epsilon)(1+\epsilon)\psi(a_\epsilon(t)) < (\beta+\epsilon)(1+\epsilon)\psi(H_\epsilon(t)),\quad t \geq 2T.
\]
But since $\psi(x) \sim \phi(x)/x$ we also have $\psi(H_\epsilon(t))/(1+\epsilon)< \phi(H_\epsilon(t))/H_\epsilon(t)$ and hence
\[
\phi'(a_\epsilon(t))<(\beta+\epsilon)(1+\epsilon)^2 \frac{\phi(H_\epsilon(t))}{H_\epsilon(t)},\quad t \geq 2T.
\]
Combining this estimate with \eqref{I_upper_est} yields
\begin{align*}
I'_\epsilon(t) < \phi(H_\epsilon(t)) + (\beta+\epsilon)(1+\epsilon)^4 \frac{\phi(H_\epsilon(t))}{H_\epsilon(t)}M_1(t) I_\epsilon(t), \quad t \geq 2T.
\end{align*}
Letting $\alpha_\epsilon(t) = (\beta+\epsilon)(1+\epsilon)^4 M_1(t)\, \phi(H_\epsilon(t))/H_\epsilon(t)$, this becomes
$
I'_\epsilon(t) < \phi(H_\epsilon(t)) + \alpha_\epsilon(t)\, I_\epsilon(t)\text{ for } t \geq 2T.
$
Thus the variation of constants formula yields
\[
I_\epsilon(t) \leq e^{\int_T^t \alpha_\epsilon(s)ds}\int_T^t e^{-\int_T^s \alpha_\epsilon(u)du}\phi(H_\epsilon(s))ds,\quad t \geq 2T. 
\]
We reformulate this as
\begin{align}\label{Upper_final}
\frac{I_\epsilon(t)}{\int_T^t \phi(H_\epsilon(s))ds} \leq \frac{\int_T^t e^{-\int_T^s \alpha_\epsilon(u)du}\phi(H_\epsilon(s))ds}{e^{-\int_T^t \alpha_\epsilon(s)ds}\int_T^t\phi(H_\epsilon(s))ds} =: \frac{C_\epsilon(t)}{B_\epsilon(t)}, \quad t \geq 2T.
\end{align}
Since $C_\epsilon'(t) = \phi(H_\epsilon(t))e^{-\int_T^t \alpha_\epsilon(u)du}>0$, we have $\lim_{t\to\infty}C_\epsilon(t) = C^*(\epsilon) \in (0,\infty)$ or $\lim_{t\to\infty}C_\epsilon(t) = \infty$. Also, for $t \geq 2T$,
\begin{align*}
B_\epsilon'(t) &= \phi(H_\epsilon(t))e^{-\int_T^t \alpha_\epsilon(u)du} - \alpha_\epsilon(t)e^{-\int_T^t \alpha_\epsilon(u)du}\int_T^t \phi(H_\epsilon(s))ds\\
&= C_\epsilon'(t) - \frac{\alpha_\epsilon(t)\, C_\epsilon'(t)\, \int_T^t \phi(H_\epsilon(s))ds }{\phi(H_\epsilon(t))}
=  C_\epsilon'(t)\left\{1 - \frac{\alpha_\epsilon(t)\, \int_T^t \phi(H_\epsilon(s))ds }{\phi(H_\epsilon(t))} \right\}.
\end{align*}
Therefore, recalling the definition of $\alpha_\epsilon(t)$ and rearranging,
\[
\frac{B_\epsilon'(t)}{C_\epsilon'(t)} = 1 - (\beta+\epsilon)(1+\epsilon)^4 \left( \frac{M_1(t) \int_T^t \phi(H_\epsilon(s))ds}{H_\epsilon(t)} \right), \quad t \geq 2T.
\]
Letting $t \to \infty$ and using the hypothesis \eqref{bigpert_hypth}, and that $H_\epsilon(t) \sim H(t)$ and $M_1(t) \sim M(t)$ as $t\to\infty$,  yields $\lim_{t\to\infty}B_\epsilon'(t)/C_\epsilon'(t) = 1$, or that $\lim_{t\to\infty}C_\epsilon'(t)/B_\epsilon'(t) = 1$. Hence there exists $T_4$ such that $B_\epsilon'(t)>0$ $t \geq T_4$ and either $\lim_{t\to\infty}B_\epsilon(t) = B^*(\epsilon) \in (0,\infty)$ or $\lim_{t\to\infty}B_\epsilon(t) = \infty$. Furthermore, asymptotic integration shows that $\lim_{t\to\infty}C_\epsilon(t)=\infty$ implies $\lim_{t\to\infty}B_\epsilon(t)=\infty$ and $\lim_{t\to\infty}C_\epsilon(t)=C^*(\epsilon)$ implies $\lim_{t\to\infty}B_\epsilon(t)=B^*(\epsilon)$. Hence,
\[
\Lambda(\epsilon):= \lim_{t\to\infty}\frac{C_\epsilon(t)}{B_\epsilon(t)} = 
\begin{cases}
1, \,\, \lim_{t\to\infty}C_\epsilon(t)=\infty,\\
\tfrac{C*(\epsilon)}{B^*(\epsilon)},\,\,\lim_{t\to\infty}C_\epsilon(t)=C^*,
\end{cases}
\]
where the first limit is calculated using L'H\^{o}pital's rule. Taking the limit superior across equation \eqref{Upper_final} then yields
\begin{align}\label{epsilon_dependent_upper1}
\limsup_{t\to\infty}\frac{\int_T^t \phi(x(s))ds}{\int_T^t \phi(H_\epsilon(s))ds} = \limsup_{t\to\infty}\frac{I_\epsilon(t)}{\int_T^t \phi(H_\epsilon(s))ds} \leq \Lambda(\epsilon) \in (0,\infty).
\end{align}
Since $H_\epsilon(t) \sim H(t)$ as $t\to\infty$ and $\phi$ is increasing we can apply L'H\^{o}pital's rule once more to compute
\[
\lim_{t\to\infty}\frac{\int_T^t \phi(H_\epsilon(s))ds}{\int_0^t \phi(H_\epsilon(s))ds} = \lim_{t\to\infty}\frac{\phi(H_\epsilon(t))}{\phi(H(t))} = 1^\beta = 1,
\]
using that $\phi \in \text{RV}_\infty(\beta)$. A similar argument relying on the divergence of $\phi(x(t))$ and L'H\^{o}pital's rule yields $\int_T^t \phi(x(s))ds \sim \int_0^t \phi(x(s))ds$ as $t\to\infty$. Therefore \eqref{epsilon_dependent_upper1} is equivalent to 
\begin{align}\label{epsilon_dependent_upper2}
\limsup_{t\to\infty}\frac{\int_0^t \phi(x(s))ds}{\int_0^t \phi(H(s))ds}  \leq \Lambda(\epsilon) \in (0,\infty).
\end{align}
Therefore there exists a $\Lambda^* \in (0,\infty)$ such that 
$
\limsup_{t\to\infty}\int_0^t \phi(x(s))ds/\int_0^t \phi(H(s))ds  \leq \Lambda^*,
$
with $\Lambda^*$ independent of $\epsilon$. Thus there exists a $T_6(\epsilon)$ such that $\int_0^t \phi(x(s))ds < (\Lambda^*+\epsilon)\int_0^t \phi(H(s))ds$ for all $t \geq T_6(\epsilon)$. Letting $\bar{T} = 1 + \max(2T,T_6)$ we apply this estimate to \eqref{upper.est.pert3} as follows
\begin{align*}
\frac{x(t)}{H(t)} &< \frac{H_\epsilon(t)}{H(t)} + \frac{(1+\epsilon)^2 \, M_1(t)\, \int_T^t \phi(x(s))ds}{H(t)}< \frac{H_\epsilon(t)}{H(t)} + \frac{(1+\epsilon)^2 \, M_1(t)\, (\Lambda^*+\epsilon)\int_0^t \phi(H(s))ds}{H(t)}, \quad t \geq \bar{T}.
\end{align*}
Now, since $H_\epsilon(t) \sim H(t)$ as $t\to\infty$ and $M_1 \sim M$, applying \eqref{bigpert_hypth} to the above estimate yields
$
\limsup_{t\to\infty}x(t) / H(t) \leq 1.
$
By positivity \eqref{volterra_pert} admits the trivial bound $x(t) > H(t)$ for all $t \geq 0$ and hence $\liminf_{t\to\infty}x(t)/H(t) \geq 1$, completing the proof.
\end{proof}
\section{Proofs of Miscellaneous Propositions and Examples}\label{sec.examples}
\begin{proof}[Proof of Proposition \ref{interpret}]
$(i.)$ is clear from inspection. For $(ii.)$ recall the following form of Sterling's approximation (see \cite[eq. 5.11.7, p.141]{nist})
\begin{align}\label{sterling}
\Gamma(az+b) \sim \sqrt{2\pi}e^{-az}(az)^{az+b - \tfrac{1}{2}}, \text{ as }z\to\infty, \text{ for }a \in (0,\infty),\,\,b \in \mathbb{R}.
\end{align}
Hence, as $\theta\to\infty$,
\begin{align*}
\Lambda(\beta,\theta) &\sim \frac{\left(\sqrt{2\pi}\,e^{-\theta}\theta^{\theta+\tfrac{1}{2}}\right) \left(\sqrt{2\pi}\,e^{\frac{-\beta\theta}{1-\beta}}\{\frac{\beta\theta}{1-\beta}\}^{\frac{\beta\theta}{1-\beta}+\frac{1}{1-\beta}-\frac{1}{2}}\right)}{\left(\sqrt{2\pi}\,e^{\frac{\theta}{\beta-1}}\{\frac{\theta}{1-\beta}\}^{\frac{\theta}{1-\beta}+\frac{1}{1-\beta}-\frac{1}{2}}\right)} \sim \sqrt{2\pi}\,\theta^{\frac{1}{2}}\left((1-\beta)\beta^{\frac{\beta}{1-\beta}}\right)^\theta \,\beta^{\frac{1}{1-\beta}-\frac{1}{2}}.
\end{align*}
Therefore, since $(1-\beta)\beta^{\frac{\beta}{1-\beta}} \in (0,1)$ for $\beta \in (0,1)$, $\lim_{\theta\to\infty}\Lambda(\beta,\theta)=0$, for each fixed $\beta \in (0,1)$. Now, for fixed $\theta \in(0,\infty)$, let $z(\beta)=(\beta\theta+1)/(\theta(1-\beta))$ and note that $\lim_{\beta\uparrow 1}z(\beta)=\infty$. Applying \eqref{sterling} then yields
\begin{align*}
\Lambda(\beta,\theta) &= \frac{\Gamma(1+\theta)\Gamma\left( \theta z(\beta)\right)}{\Gamma(\theta z(\beta) + \theta)} \sim \Gamma(1+\theta)(\theta z(\beta))^{-\theta} \sim \Gamma(1+\theta)\left(\frac{1+\beta\theta}{1-\beta}\right)^{-\theta} \mbox{ as } \beta \uparrow 1.
\end{align*}
Therefore $\lim_{\beta\uparrow 1}\Lambda(\beta,\theta) = 0$ for each fixed $\theta \in (0,\infty)$. \\\\*
To see $(iii.)$ compute $\frac{\partial}{\partial \beta}\Lambda(\beta,\theta)$ as follows
\begin{align*}
\frac{\partial}{\partial \beta}\Lambda(\beta,\theta) &=  \frac{(\theta +1) \Gamma(\theta +1)}{(1-\beta)^2}\left\{\frac{\Gamma'\left(\frac{\beta\theta+1}{1-\beta}\right)}{\Gamma\left(\frac{1+\theta}{1-\beta}\right)} 
- \frac{\Gamma'\left(\frac{\theta+1}{1-\beta}\right)\Gamma\left(\frac{\beta\theta+1}{1-\beta}\right)}{\left(\Gamma\left(\frac{\theta+1}{1-\beta}\right)\right)^2}\right\} \\
&=  \frac{(\theta +1) \Gamma(\theta +1)\Gamma\left(\frac{\beta\theta+1}{1-\beta}\right)}{(1-\beta)^2\,\Gamma\left(\frac{\theta+1}{1-\beta}\right)}\left\{\frac{\Gamma'\left(\frac{\beta\theta+1}{1-\beta}\right)}{\Gamma\left(\frac{1+\beta\theta}{1-\beta}\right)} 
- \frac{\Gamma'\left(\frac{\theta+1}{1-\beta}\right)}{\Gamma\left(\frac{\theta+1}{1-\beta}\right)}\right\} \\
&= \frac{-(\theta +1) \Gamma (\theta +1) \Gamma \left(\frac{\beta  \theta +1}{1-\beta }\right) \left\{\psi\left(\frac{\theta +1}{1-\beta
   }\right)-\psi\left(\frac{\beta  \theta +1}{1-\beta }\right)\right\}}{(\beta -1)^2 \Gamma \left(\frac{\theta +1}{1-\beta }\right)},
\end{align*}
where $\psi(x) := \Gamma'(x)/\Gamma(x)$ (as in \cite[eq. 5.2.2, p.137]{nist}). Hence $\frac{\partial}{\partial \beta}\Lambda(\beta,\theta) < 0$ for $\beta \in (0,1)$ and for each fixed $\theta \in (0,\infty)$ if and only if 
\[
\psi\left(\frac{\theta +1}{1-\beta
   }\right)-\psi\left(\frac{\beta  \theta +1}{1-\beta }\right)>0.
\] 
This holds because $\psi$ is monotone increasing on $\mathbb{R}^+$ (see \cite[eq. 5.7.6, p.139]{nist}). Similarly, to prove claim $(iv.)$, it can be shown that 
\[
\frac{\partial}{\partial \theta}\Lambda(\beta,\theta) = \frac{\Gamma (\theta +1) \Gamma \left(\frac{\beta  \theta +1}{1-\beta }\right) \left\{(\beta -1) \psi(\theta +1)+\psi\left(\frac{\theta +1}{1-\beta }\right)-\beta  \psi\left(\frac{\beta  \theta +1}{1-\beta }\right)\right\}}{(\beta -1) \Gamma
   \left(\frac{\theta +1}{1-\beta }\right)}.
\]
Hence $\frac{\partial}{\partial \beta}\Lambda(\beta,\theta) < 0$ for $\theta \in (0,\infty)$ and for each fixed $\beta \in (0,1)$ if and only if 
\begin{align}\label{theta_decr}
(\beta -1) \psi(\theta +1)+\psi\left(\frac{\theta +1}{1-\beta }\right)-\beta  \psi\left(\frac{\beta  \theta +1}{1-\beta }\right) > 0.
\end{align}
By monotonicity of $\psi$, \eqref{theta_decr} is implied by 
\[
(\beta -1) \psi(\theta +1)+\psi\left(\frac{\theta +1}{1-\beta }\right)-\beta  \psi\left(\frac{\theta +1}{1-\beta }\right) > 0.
\]
However, this inequality is equivalent to
\[
\psi\left(\frac{\theta +1}{1-\beta }\right) > \psi(\theta +1).
\]
Since $\psi$ is increasing and $\beta \in (0,1)$ this holds for all $\theta \in (0,\infty)$. Thus \eqref{theta_decr} holds and the claim is proven. Finally, claim $(v.)$ follows from the continuity of $\Lambda$ and claims $(i.)-(iv.)$.
\end{proof}
\begin{proof}[Proof of Proposition \ref{asym_lemma}]
Applying Karamata's Theorem to $F$ yields the first part of the claim. We restate this in terms of $\ell$ as follows
\[
F(x) \sim \frac{1}{1-\beta}\frac{x^{1-\beta}}{\ell(x)^{1-\beta}}, \text{ as }x\to\infty.
\]
Note that $L(x) := 1/(1-\beta)^{\tfrac{1}{1-\beta}}\,\ell(x) \in \text{RV}_\infty(0)$. Hence applying \cite[Theorem 1.5.15]{BGT}
\begin{align}\label{F_inv_asym}
F^{-1}(x) \sim x^{\frac{1}{1-\beta}}\, L^{\#}\left(x^{\frac{1}{1-\beta}}\right),\text{  as }x\to\infty.
\end{align}
where $L^{\#}$ denotes the de Bruijn conjugate of $L$ (see \cite[eq. 1.5.12, p.29]{BGT}). It can be shown that \eqref{debruijn} is equivalent to
$
\lim_{x\to\infty}L(x/L(x))/L(x) = 1,
$
and by \cite[Corollary 2.3.4]{BGT} this is a sufficient condition for $L^{\#}(x) \sim 1/L(x)$ as $x\to\infty$. Combining this fact with \eqref{F_inv_asym} we conclude that
\[
F^{-1}(x) \sim x^{\frac{1}{1-\beta}}\left\{L(x^{\frac{1}{1-\beta}})\right\}^{-1} \sim (1-\beta)^{\frac{1}{1-\beta}}\,\ell\left(x^{\frac{1}{1-\beta}}\right)x^{\frac{1}{1-\beta}}, \text{ as }x\to\infty,
\]
completing the proof.
\end{proof}
\begin{proof}[Proof of Proposition \ref{discrete}]
By hypothesis there exists a monotone decreasing, $C^1$ function $\tilde{\mu}$ such that $\mu_0(s) \sim \tilde{\mu}(s)$ as $s\to\infty$. Define $\tilde{M}(t) := \int_0^t \tilde{\mu}(ds)$ and note that $\tilde{M} \in \text{RV}_\infty(\theta)$. We claim that $M(t) \sim \tilde{M}(t)$ as $t\to\infty$. To see this first note that for an arbitrary $\epsilon\in(0,1)$
$
1-\epsilon < \mu_0(j\tau) / \tilde{\mu}(j\tau)<1+\epsilon, \,\, \mbox{ for all } j\tau \geq J(\epsilon),
$
for some $J(\epsilon)\in \mathbb{Z}^+$. Hence 
\begin{align}\label{sum_est}
(1-\epsilon)\sum_{j=J(\epsilon)}^{\lfloor t/\tau \rfloor}\tilde{\mu}(j\tau) \leq \sum_{j=J(\epsilon)}^{\lfloor t/\tau \rfloor}\mu_0(j\tau) \leq (1+\epsilon)\sum_{j=J(\epsilon)}^{\lfloor t/\tau \rfloor}\tilde{\mu}(j\tau),
\end{align}
for all $t \geq \tau(1+J(\epsilon))$. Noting that $M$ is given by \eqref{M_discrete}, we can write \eqref{sum_est} as
\begin{align}\label{sum_est_2}
M((J(\epsilon)-1)\tau )+ (1-\epsilon)\sum_{j=J(\epsilon)}^{\lfloor t/\tau \rfloor}\tilde{\mu}(j\tau) \leq M(t) \leq (1+\epsilon)\sum_{j=J(\epsilon)}^{\lfloor t/\tau \rfloor}\tilde{\mu}(j\tau) + M((J(\epsilon)-1)\tau ),
\end{align}
for all $t \geq \tau(1+J)$. It follows that 
\begin{align}\label{sum_upper_est}
 M(t) &\leq (1+\epsilon)\sum_{l=J-1}^{\lfloor t/\tau \rfloor -1}\int_{l\tau}^{(l+1)\tau}\tilde{\mu}(s)ds + M((J-1)\tau )\leq (1+\epsilon)\int_{(J-1)\tau}^{\lfloor t/\tau \rfloor\tau + \tau}\tilde{\mu}(s)ds + M((J-1)\tau )\nonumber\\ &\leq (1+\epsilon)\int_{(J-1)\tau}^{t + \tau}\tilde{\mu}(s)ds + M((J-1)\tau ), \quad t \geq \tau(1+J).
\end{align}
Similarly, from \eqref{sum_est_2},
\begin{align}\label{sum_lower_est}
M(t) &\geq M((J-1)\tau )+ (1-\epsilon)\sum_{j=J(\epsilon)}^{\lfloor t/\tau \rfloor}\tilde{\mu}(j\tau)\geq M((J-1)\tau )+ (1-\epsilon)\sum_{j=J(\epsilon)}^{\lfloor t/\tau \rfloor}\int_{j\tau}^{(j+1)\tau}\tilde{\mu}(s)ds \nonumber \\
&\geq  M((J-1)\tau )+ (1-\epsilon)\int_{j\tau}^{\lfloor t/\tau \rfloor \tau+\tau}\tilde{\mu}(s)ds \geq M((J-1)\tau )+ (1-\epsilon)\int_{j\tau}^{\lfloor t/\tau \rfloor \tau}\tilde{\mu}(s)ds,
\end{align}
for $t \geq \tau(1+J)$. Hence combining the upper estimate \eqref{sum_upper_est} and the lower estimate \eqref{sum_lower_est} yields
\begin{align*}
M((J-1)\tau )+ (1-\epsilon)\int_{j\tau}^{\lfloor t/\tau \rfloor \tau}\tilde{\mu}(s)ds \leq M(t) \leq (1+\epsilon)\int_{(J-1)\tau}^{t + \tau}\tilde{\mu}(s)ds + M((J-1)\tau ), \,\, t \geq \tau(1+J).
\end{align*}
Therefore 
$
\lim_{t\to\infty}M(t)/\tilde{M}(t)=1,
$
and $M \in \text{RV}_\infty(\theta)$. 
\end{proof}
\bibliography{references}
\bibliographystyle{abbrv}
\end{document}